
\documentclass[12pt]{amsart}
\usepackage{amsbsy,amssymb,amscd,amsfonts,latexsym,amstext,delarray,
amsmath,graphicx,enumitem} 
\setlength{\textheight}{21.5cm} \headsep=15pt
\setlength{\textwidth}{15.2cm}
\setlength{\oddsidemargin}{0.5cm} 
\setlength{\topmargin}{-.2cm}
\setlength{\evensidemargin}{\oddsidemargin}
\usepackage[all]{xy}

\usepackage{color}

\newtheorem{thm}{Theorem}[section]
\newtheorem{prop}[thm]{Proposition}
\newtheorem{corol}[thm]{Corollary}
\newtheorem{lemma}[thm]{Lemma}
\newtheorem{conj}{Conjecture}[section]
\newtheorem*{conjnono}{Conjecture}
\newtheorem{claim}[thm]{Claim}
{\theoremstyle{definition} \newtheorem{defin}{Definition}[section]}
{\theoremstyle{remark} \newtheorem{remark}{Remark}[section]
\newtheorem{example}{Example}[section]}
\numberwithin{equation}{section}

\def\bA{{\mathbb A}}
\def\Abb{{\mathbb A}}

\def\Bb{{\mathbb B}}

\def\bF{{\mathbb F}}

\def\bL{{\mathbb L}}
\def\Lbb{{\mathbb L}}

\def\Pbb{{\mathbb P}}

\def\Sbb{{\mathbb S}}

\def\Tbb{{\mathbb T}}
\def\bU{{\mathbb U}}
\def\Ubb{{\mathbb U}}

\def\Zbb{{\mathbb Z}}

\def\A{{\mathbb A}}

\renewcommand{\P}{{\mathbb P}}
\def\Q{{\mathbb Q}}

\def\K{{\mathbb K}}

\def\cF{{\mathcal F}}

\def\cM{{\mathcal M}}

\def\cS{{\mathcal S}}

\def\cV{{\mathcal V}}

\newcommand{\qede}{\hfill $\lrcorner$}
\newenvironment{itemized}{\begin{itemize}[itemsep=.5ex, wide]}{\end{itemize}}

\title{Motives of melonic graphs}
\author{Paolo Aluffi, Matilde Marcolli, and Waleed Qaisar}
\address{Florida State University, Tallahassee \\ USA}
\email{aluffi@math.fsu.edu}
\address{California Institute of Technology, Pasadena \\ USA \newline \indent
University of Toronto, Toronto \\ Canada \newline \indent
Perimeter Institute for Theoretical Physics, Waterloo \\ Canada}
\email{matilde@caltech.edu}
\address{University of Toronto, Toronto \\ Canada}
\email{waleed.qaisar@mail.utoronto.ca}

\begin{document}

\begin{abstract}
We investigate recursive relations for the Grothendieck classes of the affine graph hypersurface
complements of melonic graphs. 
We compute these classes explicitly for several families of melonic graphs, focusing 
on the case of graphs with valence-$4$ internal vertices, relevant to CTKT tensor models.
The results hint at a complex and
interesting structure, in terms of divisibility relations or nontrivial relations between classes of
graphs in different families. Using the recursive relations we prove that the Grothendieck
classes of all melonic graphs are positive as polynomials in the class of the moduli space
$\mathcal M_{0,4}$. We also conjecture that the corresponding polynomials are {\em log-concave,\/}
on the basis of hundreds of explicit computations.
\end{abstract}

\maketitle

\tableofcontents

\section{Introduction}

In this paper we obtain a recursive formula for the Grothendieck classes (virtual motives) 
of the graph hypersurfaces associated to the melon-tadpole graphs. This provides a recursively
constructed family of mixed-Tate motives, which includes the motives associated to the leading 
melonic terms of certain bosonic tensor models.

\smallskip

Our motivation in considering the behavior of the motives of melon and melon-tadpole
graphs comes from the fact that several interesting physical models are dominated in
the large $N$ limit by melonic graphs. This is the case for SYK models (see \cite{BoNaTa}
for a rigorous diagrammatic proof), as well as in group field theory (see for instance \cite{BCORS}) 
and tensor models (\cite{BGRR}, \cite{FuTa}, \cite{Gu3}, \cite{GuRy}), 
which include generalizations of the SYK models (see for instance \cite{GroRo}, \cite{Wit}).

\smallskip
\subsection{Graph polynomials and CTKT models}

The graph polynomials that one expects to find, when representing amplitudes in 
Feynman parametric form in the setting of group field theory and tensor models, 
are usually of the form described in \cite{Gu4} or \cite{Tan}. The Tanasa graph polynomials
of \cite{Tan} are generalizations of the Bollob\'as-Riordan polynomial that satisfy the
deletion-contraction relation. Similarly, the Gurau polynomials of \cite{Gu4} 
also satisfy a deletion-contraction relation. The motives of hypersurfaces associated
to these polynomials may be, in principle, amenable to the kind
of algebro-geometric techniques discussed in \cite{AluMa11}, which we rely
on in this paper, but in a form more similar to the case of the Potts models we 
analyzed in \cite{AluMa-Potts}. 
However, the computation of the Grothendieck class we obtain here relies
essentially on the recursive form of the Grothendieck class for splitting an
edge and for replacing an edge by a number of parallel edges, obtained in 
\cite{AluMa11}. These formulas do not have a simple counterpart 
for the case of the Potts models and other graph polynomials with
deletion-contraction. This means that a more general computation 
of the polynomials of \cite{Gu4} or \cite{Tan} probably requires a much
more thorough analysis and would not be an immediate generalization of the
argument presented here. 
Other parametric realizations of tensor models, such as \cite{GelTor}, do not even
satisfy a deletion-contraction relation, hence they cannot be addressed via the
method of \cite{AluMa11} and of this paper.

\smallskip

The case we focus on here, however, is simpler and it involves the usual graph
hypersurfaces associated to the Kirchhoff--Symanzik polynomial of the graph,
for a massless scalar theory. These are relevant to tensor models in the
case of the melonic sector of the CTKT models. We briefly recall below the
setting used in \cite{BeGuHa} that motivates the computations we present in
this paper. The case of the graph polynomials of \cite{Gu4} or \cite{Tan} will
be left to a future investigation. Note that, if a similar argument can be applied
to such polynomials, or to the massive melonic graphs, one does not expect
to obtain a family of motives with the mixed-Tate property, since it is known
that already for small graphs in such families the mixed-Tate property fails,
\cite{BloVan}, \cite{MaTa}. Thus, the mixed-Tate property is
certainly specific to the case of the massless Kirchhoff--Symanzik polynomial.
 
\subsection{CTKT models and melonic Feynman graphs}

We focus here on the modified version of the $O(N)^3$ model of Klebanov and Tarnopolsky \cite{KleTar}
considered in \cite{BeGuHa}, which generalizes the zero-dimensional version of \cite{CaTa}. These models 
are referred to in \cite{BeGuHa} as CTKT models and we will maintain the same terminology here.

\smallskip

We recall the following setting from \cite{BeGuHa}.
One considers a real rank three tensor field $\phi_{{\bf a}}(x)$, with 
${\bf a}=(a_1,a_2,a_3)$ that transforms under $O(N)^3$, with action  functional
\begin{equation}\label{CTKTaction}
\begin{array}{ll}
\cS[\phi]= & \displaystyle{\frac{1}{2} \int \phi_{\bf a}(x)\, (-\Delta) \, \phi_{\bf a}(x) \, d{\rm vol}(x) 
+ \cS^{int}[\phi]}  \\[3mm]
\cS^{int}[\phi]=& \displaystyle{ \frac{m^2}{2} \int \phi_{\bf a}(x)\, \delta_{{\bf ab}}\, \phi_{\bf b}(x) \, d{\rm vol}(x) +} \\[3mm]
& \displaystyle{ \frac{\lambda_t}{4 N^{3/2}} \int \delta^t_{{\bf abcd}}\,  \phi_{\bf a}(x)\,  \phi_{\bf b}(x)\,  \phi_{\bf c}(x)\,  \phi_{\bf d}(x)\,
d{\rm vol}(x) +} \\[3mm]
& \displaystyle{  \int \left(  \frac{\lambda_p}{4 N^2} \delta^p_{{\bf ab};{\bf cd}} + \frac{\lambda_d}{4N^3} \delta^d_{{\bf ab};{\bf cd}} \right) \,  \phi_{\bf a}(x)\,  \phi_{\bf b}(x)\,  \phi_{\bf c}(x)\,  \phi_{\bf d}(x)\,
d{\rm vol}(x) }
\end{array}
\end{equation}
with $\Delta=\partial_\mu \partial^\mu$ and with 
\[ 
\begin{array}{ll} 
\delta^t_{{\bf abcd}}=\delta_{a_1 b_1}\delta_{c_1 d_1} \delta_{a_2 c_2} \delta_{b_2 d_2} \delta_{a_3 d_3} \delta_{b_3 c_3} & \delta_{{\bf ab}}=\prod_i \delta_{a_i,b_i}, \\[2mm]
\delta^p_{{\bf ab};{\bf cd}}=\frac{1}{3} \sum_i \delta_{a_i c_i} \delta_{b_i d_i} \prod_{i\neq j} \delta_{a_j b_j} \delta_{c_j d_j}, & \delta^d_{{\bf ab};{\bf cd}} = \delta_{{\bf ab}} \delta_{{\bf cd}}. \end{array}
\]
The labels $t,p,d$ distinguish the tetrahedron, pillow, and double-trace patterns of contraction. 
When edges are colored (red, green, or blue) according to the values of the tensor indices in $\{ 1,2,3 \}$
these different quartic terms correspond to the graphs of Figure~\ref{tpdFig} 
(with three different choices of the pillow contraction depending on the color of the vertical edge).

\begin{figure}
\begin{center}
\includegraphics[scale=.6]{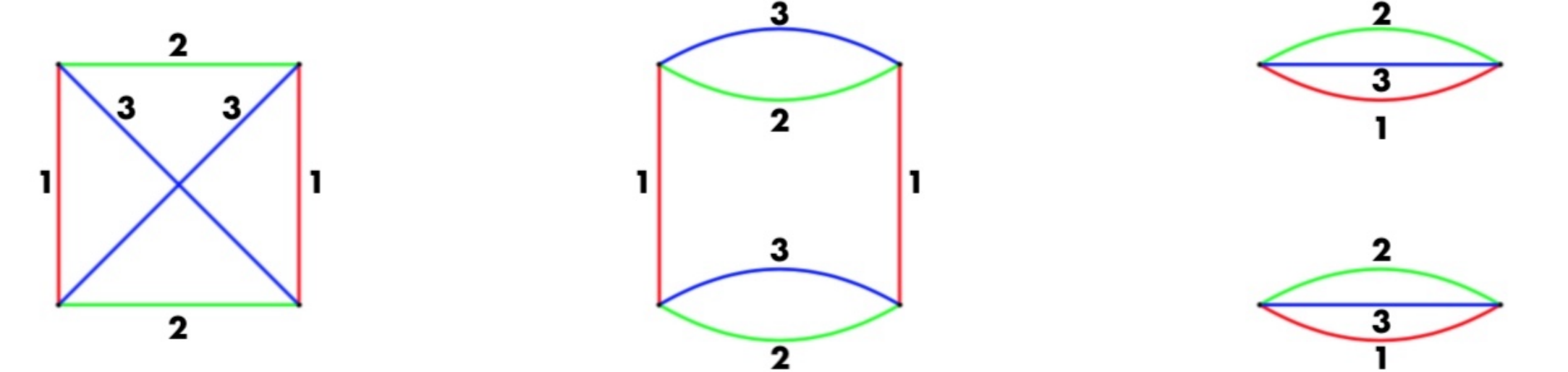}
\caption{Tetrahedron, pillow, and double-trace contractions in CTKT models,  \cite{BeGuHa}.\label{tpdFig}}
\end{center}
\end{figure}

When one computes the contributions to the expansion at leading order in $1/N$ and all
orders in the coupling constants, this is usually done using the $4$-colored graphs expansion
of tensor models (\cite{BGR}, \cite{Gu1}, \cite{Gu2})
with $3$-colored graphs for the different interaction terms as mentioned above (bubbles) 
and another color for the propagators connecting these $3$-colored bubbles.
However, as shown in  \cite{BeGuHa}, it is possible to also consider an expansion
in ordinary Feynman graphs, which are obtained by shrinking all the bubbles to points.
The free energy of the model is written in \cite{BeGuHa} in the form of a sum over 
connected vacuum $4$-colored graphs with labelled tensor vertices,
\[ 
\cF= \sum_G N^{F-\frac{3}{2}n_t -2n_p -3 n_d} \frac{\lambda_t^{n_t}}{n_t ! \, 4^{n_t}}
\, \frac{\lambda_p^{n_p}}{n_p !\, 12^{n_p}} \, \frac{\lambda_d^{n_d}}{n_d ! \, 4^{n_d}}\, 
(-1)^{n_t+n_p+n_d+1} \bA(G),
\]
with $n_t(G)$, $n_p(G)$, and $n_d(G)$ the number of 
tetrahedral, pillow, and double-trace bubbles, respectively, and $F(G)$ the number of faces and with
$\bA(G)$ the amplitude of $G$ written in terms of edge propagators (see \S 2.1 of \cite{BeGuHa}).
One then replaces the $4$-colored graphs $G$ in this expansion with ordinary Feynman graphs
by first replacing all the pillow and double trace bubbles with their minimal resolution in terms of
tetrahedral bubbles (as in Figure~3 of \cite{BeGuHa}). An ordinary Feynman graph is then
obtained by replacing these bubble by vertices. The resulting graph corresponds to a term
of order zero in $1/N$ iff it is a melon-tadpole graph, that is, a graph obtained by
iterated insertion of melons or tadpoles into a melon or tadpole (Figure~\ref{meltadFig}). 
In the absence of pillows and double traces one would obtain just melonic graphs.
The amplitudes $\bA(G)$ of the resulting ordinary melon-tadpole Feynman graphs can
then be computed in the Feynman parametric form, in terms of the Kirchhoff--Symanzik polynomial,
as in \cite{BeGuHa}. We will not discuss here the renormalization problem for the
resulting Feynman integrals, for which we refer the reader to \cite{BeGuHa}. We focus
here instead on the algebro-geometric and motivic properties of these 
melon-tadpole Feynman integrals. 

\begin{figure}
\begin{center}
\includegraphics[scale=0.6]{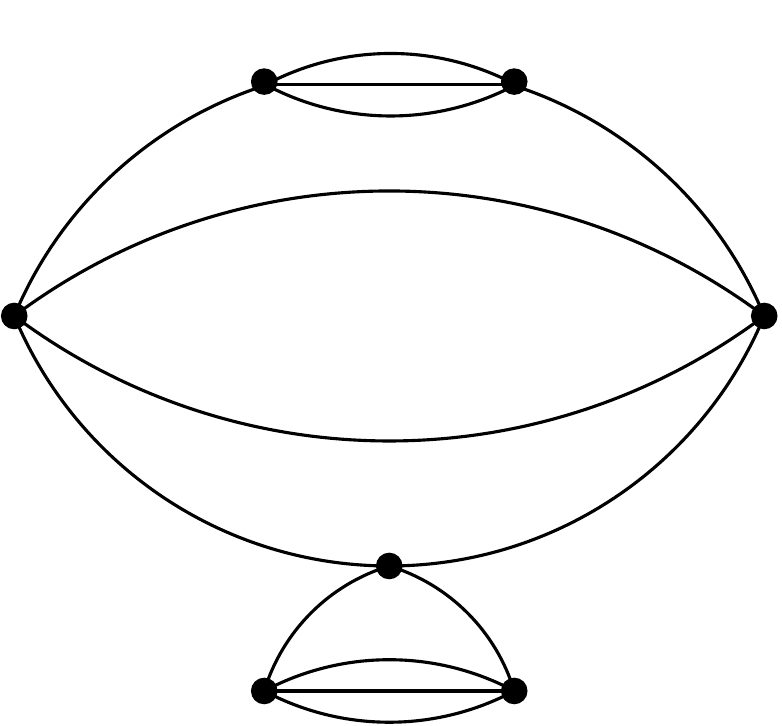}
\caption{Melon-tadpole graphs in CTKT models, \cite{BeGuHa}.\label{meltadFig}}
\end{center}
\end{figure}

\smallskip

{}From the point of view of motivic structures in quantum field theory (see \cite{Mar-book}
for a general overview), our goal here is to show  that massless CTKT models are dominated 
by a recursively constructed family of mixed-Tate motives. 

\smallskip
\subsection{Families of melonic graphs}

The melonic and melon-tadpole graphs that occur in the massless CTKT models
are all constrained by the condition that all vertices have valence $4$, because of
the form \eqref{CTKTaction} of the action functional. In order to study the recursive
properties of the Grothendieck classes associated to these graphs, however, it is
convenient to consider them as a subfamily of a larger family of graphs, which
include melonic graphs with vertices of arbitrary valences. 

\smallskip

Moreover, in the
typical description of melonic graphs, one assumes that the melonic insertions 
are separated by edge propagators (equivalently, one performs an insertion
by first splitting an edge into three edges by the insertion of two valence-two
vertices and then replaced the middle edge by a number of parallel edges).
Again, in our setting it is more convenient to consider these graphs as a 
subfamily of a larger family of melonic graphs where an edge can be
split into a number of subedges and each of them replaced by a set
of parallel edges. The typical case of graphs with only valence-four
internal vertices and including edge propagators will guide us in the choice 
of the examples illustrating the main recursive construction.

\smallskip

We also consider graphs with external edges and graphs (vacuum bubbles)
without any external edges. Instead of following the usual physics convention
of regarding external edges as half-edges (flags), we consider then as edges
with a valence-one vertex. In this setting, when considering non-vacuum
graphs for the CTKT case, we will allow formal valence-one vertices (to mark the
external edges) in addition to the valence-four vertices of the self-coupling
interactions. 

\smallskip

We will not treat separately the melon-tadpole graphs. Indeed our
more general class of graphs includes the operation of bisecting
an edge with an intermediate vertex and a melon-tadpole graph
is simply obtained by grafting together at the vertex two melonic
graphs with this operation performed on one of their edges. Since
the Grothendieck classes for graphs joined at a vertex is just a
product, these classes are easily derived from the ones in the
family we work with. 

\smallskip
\subsection{Summary of the paper}

In \S \ref{BtmNc} we present a convenient formalism for the recursive construction
of melonic graphs with arbitrary valences and we show that a subclass of ``reduced
constructions" always suffices. We reformulate the construction in terms of labelled
bipartite rooted trees. We then focus on the main case of interest for CTKT models,
where graphs have all (internal) vertices of valence four.

\smallskip

In \S\ref{MGc} we recall some basic facts about the Grothendieck ring of
varieties, the parametric Feynman integrals, the graph hypersurfaces
defined by the Kirchhoff--Symanzik polynomial, and the Grothendieck
classes of the affine graph hypersurface complement. 
We focus on the Grothendieck class because it is universal
among invariants which behave well with respect to basic set-theoretic
operations. For example, the Grothendieck class determines the Hodge--Deligne
numbers of the complement of the (affine) graph hypersurface, as well as the
number of points of the complement over finite fields. 
We obtain a recursive formula for the Grothendieck class of melonic graphs
with arbitrary valences. This formula can be effectively implemented in a
standard symbolic manipulation system, and is also useful as a tool to study
general features of Grothendieck classes of melonic graphs. For example,
we prove that the Grothendieck class of a melonic graph can be expressed
as a polynomial with {\em positive\/} coefficients in the class~$\Sbb$ of the
moduli space $\mathcal M_{0,4}$, i.e., $\Sbb=[\P^1\smallsetminus \{0,1,\infty\}]$.
Extensive computer evidence also suggests the following:
\begin{conjnono}
The polynomial expressing the Grothendieck class of a melonic graph is
log-concave (in the sense of~\cite{Stan}).
\end{conjnono}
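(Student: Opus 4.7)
The plan is to argue by induction on the recursive construction of melonic graphs, using the recursive formula for the Grothendieck class established in \S\ref{MGc}. The base cases are the simplest melons --- banana graphs with $k$ parallel edges --- whose classes can be computed in closed form and shown directly to be log-concave as polynomials in $\Sbb$. For the inductive step, each elementary operation that builds a more complex melonic graph (splitting an edge by inserting a valence-two vertex, or replacing an edge by several parallel edges) induces a transformation of the polynomial in $\Sbb$, and one must verify that each such transformation preserves log-concavity of the coefficient sequence.

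The main obstacle is that log-concavity is not in general preserved under addition of sequences, while the recursive formula combines classes of subgraphs both multiplicatively \emph{and} additively. The most promising way around this is to strengthen the inductive hypothesis: rather than propagating log-concavity itself, one conjectures a more restrictive property that implies log-concavity but is preserved under a richer class of operations. Natural candidates include (i) that the polynomial is \emph{Lorentzian} in the sense of Br\"and\'en--Huh, (ii) that its coefficient sequence is a P\'olya frequency sequence, or (iii) that the univariate polynomial in $\Sbb$ has only real, non-positive roots. Lorentzian polynomials are especially attractive here because they are closed under multiplication, directional differentiation, and substitutions $\Sbb\mapsto\Sbb+c$ with $c\ge 0$, which are precisely the kinds of operations one expects in a recursion built from gluing and parallel-edge manipulations. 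If each recursive step can be rewritten as a composition of such operations, log-concavity would follow automatically.

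An alternative route is to look for a direct matroidal interpretation of the coefficients. Since the Kirchhoff--Symanzik polynomial is the basis-generating polynomial of the graphic matroid, and since melonic graphs have a highly structured cycle matroid, it is plausible that the polynomial in $\Sbb$ admits an interpretation as (a shift or specialization of) the reduced characteristic polynomial of some auxiliary matroid built from the melonic structure; the Adiprasito--Huh--Katz theorem would then apply immediately. The principal difficulty, in either approach, is calibrating the correct strengthened hypothesis: it must be weak enough to hold on all base cases and strong enough to propagate through the entire recursion. The extensive numerical data already available should make this calibration feasible, and I would invest the main effort in testing candidate strengthenings against the computed families before attempting a general structural proof.
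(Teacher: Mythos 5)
First, a point of calibration: the paper does not prove this statement. It is stated explicitly as a conjecture, supported by verification for all melonic graphs with at most $13$ edges and for hundreds of members of the special families; there is no ``paper's own proof'' to match your argument against. Your proposal, read on its own terms, is also not a proof: it is a plan that correctly identifies the central obstruction --- the recursion of \S\ref{MGc} combines the classes $\Ubb(T')$, $\Ubb(T'')$, $\Ubb(T''')$ \emph{additively}, and log-concavity is not preserved under addition of coefficient sequences --- and then defers the resolution to an unspecified ``strengthened hypothesis'' whose calibration you explicitly postpone. That calibration is precisely where the difficulty lives, so the proposal stops short of the actual content.

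Moreover, two of your three candidate strengthenings can already be ruled out using data the paper itself supplies. A finite nonnegative sequence is a P\'olya frequency sequence if and only if its generating polynomial has only real (nonpositive) roots, so your candidates (ii) and (iii) coincide; and among the $7$-edge classes listed in \S\ref{MGc} one finds $(\Sbb+1)(\Sbb+2)^{3}(\Sbb^{3}+6\Sbb^{2}+7\Sbb+3)$, whose cubic factor has one real root and a complex conjugate pair (its local minimum value is positive), so this Grothendieck class is \emph{not} real-rooted. Hence real-rootedness and the PF property fail as inductive hypotheses on the actual classes. For candidate (i), in the univariate setting the Lorentzian property of the homogenization amounts to ultra-log-concavity of the coefficient sequence, and the sequence $3,7,6,1$ of the same cubic factor already violates the ultra-log-concavity inequality at $i=1$ (since $(7/3)^{2}<3\cdot 2$); so at the very least the Lorentzian route cannot be run factor-by-factor, and you have not checked that it holds for the full classes. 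The matroidal suggestion is appealing but is likewise only a heuristic until an explicit auxiliary matroid is produced. In short: you have located the right obstacle, but the specific remedies you propose are either refuted by the paper's own computations or left unverified, so the conjecture remains open after your argument just as it does in the paper.
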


\smallskip

It is well known (see \cite{Huh}) that the log-concavity property often reflects
some deeper underlying geometric structure, in the form of some kind of
Hodge--de Rham relations. It seems likely that log-concavity of these Grothendieck
classes as functions of $\Sbb$ may indeed be pointing to some richer geometric
structure.

\smallskip

In \S \ref{MEcacI} we focus on the case of melonic graphs with (internal)
vertices of valence four, and we consider particular recursive subfamilies,
for which explicit generating functions can be obtained, both for vacuum
bubbles and for graphs with external edges, with an explicit relations
between these two cases. 
The generating functions in these cases and in the others considered in the
paper were first obtained by carrying out explicit computations using
the recursive formula obtained in~\S\ref{MGc}. As an example of the type
of result we obtain, consider the family consisting of graphs $\Gamma_n$ 
of the form
\begin{center}
\includegraphics[scale=.4]{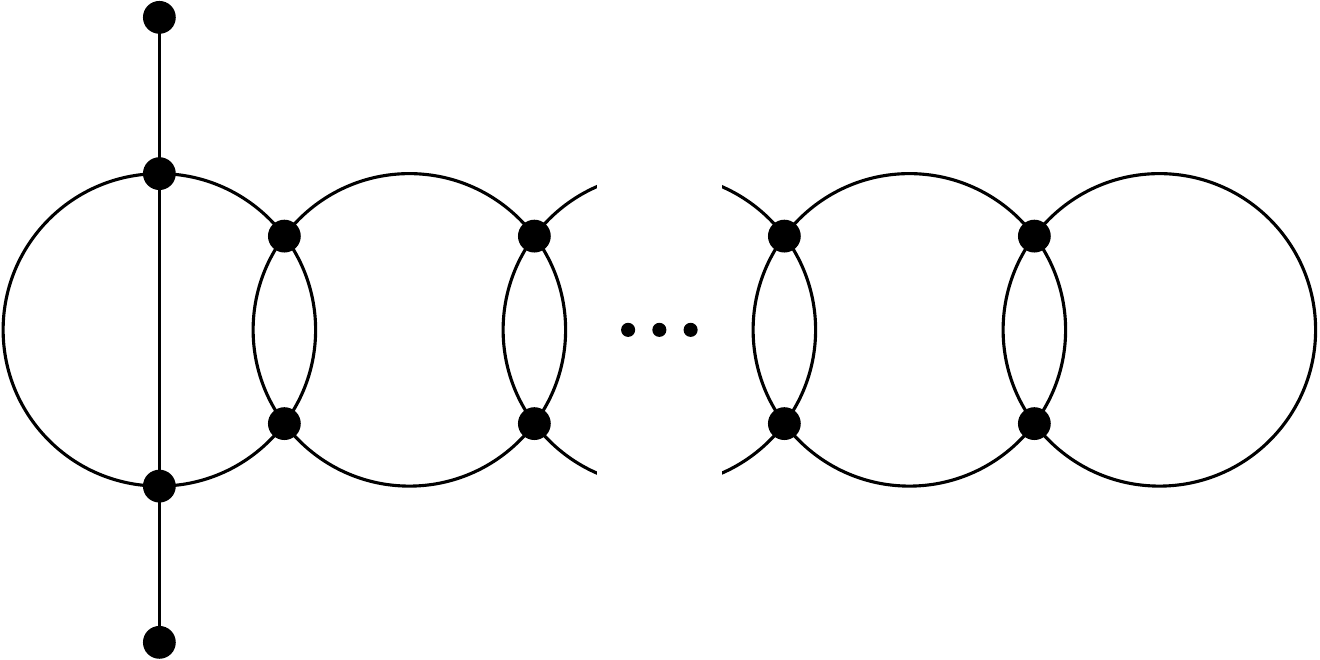}
\end{center}
with $n$ interlocked circles. Let $P_n(u,v)$ be the {\em Hodge--Deligne polynomial\/}
of the complement $Z_n$ of the affine graph hypersurface determined by $\Gamma_n$.
That is, $P_n(u,v)=\sum e^{p,q} u^p v^q$, where 
$e^{p,q}=\sum_k (-1)^k h^{p,q}(H^k_c(Z_n))$ (see e.g., \cite{DaKo}). As a consequence
of Proposition~\ref{prop:GCgamman}, the following holds:
\[
P_n(u,v)=(uv-1)^n (uv)^{2n+1}\cdot A_n(uv-1)\quad,
\]
where the polynomial $A_n(t)$ is determined by the equality of formal power series
$\sum_{n\ge 0} A_n(t) r^n=\sum_{k\ge 0} a_k(r,t)$, with
\[
\sum_{k\ge 0} a_k(r,t)\frac{s^k}{k!}=e^{rs} \cos((r^2-rt)^{\frac 12}s)\quad.
\]
Alternative expressions for $A_n(t)$ are given in~\S\ref{MEcacII}; in fact, 
the information carried by the polynomials $A_n(t)$ may be encoded in a 
{\em rational\/} generating function.

In \S\ref{MEcacII} we analyze from the same viewpoint extensions of these 
recursive subfamilies to the more general case of arbitrary valences. Again
we obtain that the corresponding Grothendieck classes are determined by
rational generating functions.

\smallskip

In \S \ref{Mav} we focus on the melonic vacuum bubbles, and we establish
a general relation between their Grothendieck classes and those of associated
non-vacuum graphs. We describe a procedure for studying the structure of
valence-four melonic vacuum bubbles in terms of their tree structure, and
we identify certain families of recursive relations, in the form of ``melonic
vacuum stars". 

\smallskip

In \S \ref{Mp} we give rigorous proofs of all the statements presented in the
previous sections.

\section{Melonic graphs}\label{BtmNc}

\subsection{The construction of melonic graphs}

A graph with two vertices and $n$ parallel edges connecting them is variously 
referred to in the literature as a melon graph, a banana graph, or a sunset graph.
In the spirit of botanical egalitarianism, we will use the ``banana'' terminology 
when referring to these basic building blocks, and call ``melonic'' the result of
iterating the operation of replacing edges of a graph by strings of bananas.
(We call this operation the `bananification' of the edge.)

Thus, the basic iterative operation constructing melonic graphs is the following:
\begin{center}
\includegraphics[scale=.4]{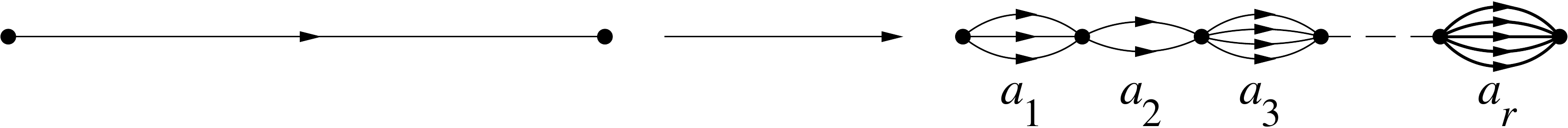}
\end{center}
We allow arbitrary sizes $a_1,\dots, a_r$ for the banana components.
Edges ought to be directed in order to determine the order of inclusion of the bananas;
in fact this will be done implicitly in what follows, since it does not affect the invariant
(Grothendieck class) we are computing.
A melonic graph is obtained by applying this operation to an initial single edge, then 
applying it iteratively to any edge of the resulting graphs.

We can refer to the initial edge as the graph obtained `at stage $0$'; the application of
the iterative process at any stage may be encoded by a tuple
\[
((a_1,\dots, a_r),p,k)
\]
to represent the replacement of one single edge in the $k$-th banana constructed at stage $p$.

\begin{example}\label{ex:illex}
The construction
\begin{center}
\includegraphics[scale=.33]{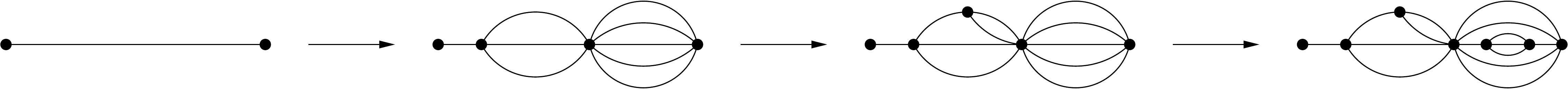}
\end{center}
may be represented by the tuples
\[
((1,3,5),0,1)\quad,\quad ((1,2),1,2) \quad,\quad ((1,3,1),1,3)\quad:
\]
\begin{itemized}
\item the first operation replaces the single edge at stage $0$ with a string consisting of 
a $1$-banana, a $3$ banana, and a $5$ banana; this is stage~$1$;
\item the second operation replaces one edge in the second banana constructed in 
stage~$1$ with a string consisting of a $1$-banana and a $2$-banana; this is stage~$2$;
\item and the third operation replaces one edge in the third banana constructed in 
stage~$1$ with a string consisting of a $1$-banana, a $3$-banana, and a $1$-banana.
This is stage~$3$.
\end{itemized}
Following this sequence of operations with $((2,3),2,1)$ would replace one edge in
the first banana produced at stage $2$ (which actually consists of a single edge)
with a string consisting of a $2$ banana followed by a $3$ banana, producing the 
graph
\begin{center}
\includegraphics[scale=.6]{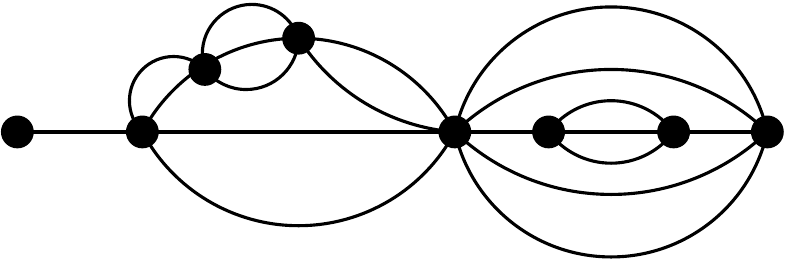}
\end{center}
(As observed below, the same graph admits different constructions.)
\qede\end{example}

Formally, we can make the following definition.

\begin{defin}\label{def:melcon}
An `$n$-stage' (or `depth $n$') {\em melonic construction\/} is a list $T=(t_1,\dots, t_n)$ 
of tuples $t_s=(b_s,p_s,k_s)$ such that
\begin{itemized}
\item[(i)] $b_s=(a_1,\dots a_{r_s})$ is a tuple of positive integers,
of length $|b_s|:=r_s\ge 1$. (Thus, the tuple is non-empty.)
\item[(ii)] $p_s$ is an integer, $0\le p_s<s$;
\item[(iii)] $k_s$ is an integer, $1\le k_s\le |b_{p_s}|$.
\item[(iv)] $p_s>0$ for all $s>1$. (By (ii), $p_1=0$.)
\item[(v)] For all $t_i=((a_1,\dots a_{r_i}),p_i,k_i)$, $i=1,\dots, n$, and all
$j=1,\dots, r_i$, at most $a_j$ tuples $t_s=(b_s,p_s,k_s)$ have $p_s=i,k_s=j$.
\end{itemized}
The length $n$ of the melonic construction is its `depth'.
\qede\end{defin}
The motivation behind these requirements should be evident from the interpretation
discussed above. For example, (v) expresses the constraint that the $j$-th banana
constructed at stage $i$ has enough edges to accommodate later replacements.

\begin{defin}\label{def:melgra}
A {\em melonic graph\/} is a graph determined by a melonic construction by the
procedure explained above.
\qede\end{defin}

Every melonic construction determines a melonic graph up to graph isomorphism.
Of course different melonic constructions may determine the same melonic graph.
We say that two constructions $T'$, $T''$ are `equivalent'
if the resulting graphs are isomorphic.

\begin{example}\label{ex:twoiso}
The construction $\{((2),0,1),((1,3,1),1,1),((1,3,1,1,1),2,2)\}$ determines a melonic
graph as follows:
\begin{center}
\includegraphics[scale=.4]{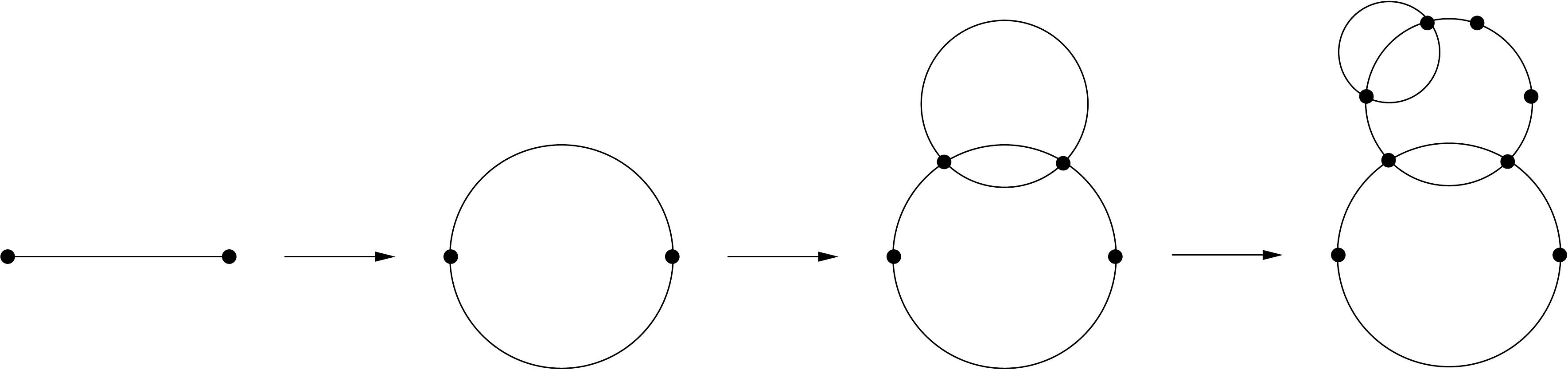}
\end{center}
The construction $\{((2),0,1),((1,3,1,3,1),1,1),((1,1,1),2,4)\}$ produces an isomorphic graph.
\begin{center}
\includegraphics[scale=.4]{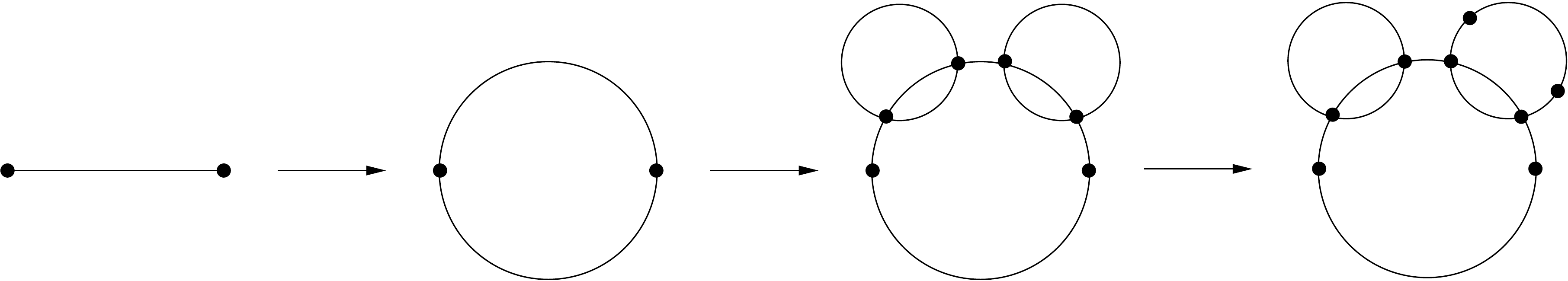}
\end{center}
\qede\end{example}

Also: The graph in Example~\ref{ex:illex} was obtained from the melonic construction
\[
\{((1,3,5),0,1), ((\underline 1,2),1,2), ((1,3,1),1,3), ((\underline{2,3}),2,1)\}\quad.
\]
The same graph can be obtained by the (shorter) construction
\[
\{((1,3,5),0,1), ((2,3,2),1,2), ((1,3,1),1,3)\}\quad.
\]
Similarly, the second construction in Example~\ref{ex:twoiso} produces the same graph
as the (longer) construction
\[
\{((2),0,1),((1,3,\underline 1),1,1),((\underline{1,3,1}),2,3),((1,1,1),3,3)\}\quad.
\]
In both cases, the shorter construction is obtained by implementing the replacement
of the (single) edge in a $1$-banana produced at stage $s$ (underlined) by inserting 
the appropriate tuple (also underlined) directly at stage $s$.

\smallskip
\subsection{Reduced melonic constructions}

Constructions such as Example~\ref{ex:twoiso} suggest the following definition.

\begin{defin}
We say that a construction is {\em reduced\/} if it does not prescribe the replacement
of the edge of a $1$-banana past stage $0$.
\qede\end{defin}

Formally, this requirement prescribes that
\begin{itemized}
\item[(vi)] For all $t_i=((a_1,\dots a_{r_i}),p_i,k_i)$, $i=1,\dots, n$:
If $a_j=1$, then $k_s\ne j$ for all $s$ such that $p_s=i$.
\end{itemized}

The process illustrated above---replacing $1$-bananas by their
descendants---may be performed on every melonic construction, and produces
an equivalent reduced construction. Therefore:

\begin{lemma}\label{lem:redrep}
Every melonic graph admits a reduced construction.
\end{lemma}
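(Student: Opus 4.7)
The plan is to define a \emph{reduction step} that converts any non-reduced construction into an equivalent one with strictly fewer stages, and then iterate until condition~(vi) holds. Given $T=(t_1,\dots,t_n)$ failing (vi), there exist $i\ge 1$ with $t_i=(b_i,p_i,k_i)$, $b_i=(a_1,\dots,a_{r_i})$, $a_j=1$ for some $j$, and (by (v), unique) $t_s=(b_s,p_s,k_s)$ with $p_s=i$, $k_s=j$. Writing $b_s=(c_1,\dots,c_m)$, the step produces $T'$ by replacing $t_i$ with
\[
t_i'=\bigl((a_1,\dots,a_{j-1},c_1,\dots,c_m,a_{j+1},\dots,a_{r_i}),\,p_i,\,k_i\bigr),
\]
deleting $t_s$, re-indexing each remaining $t_\ell$ with $\ell>s$ as $t_{\ell-1}$, and updating back-pointers: any $t_\ell$ with $p_\ell=s$ becomes $(b_\ell,\,i,\,j+k_\ell-1)$, and any $t_\ell$ with $p_\ell=i$ and $k_\ell>j$ has its $k_\ell$ incremented by $m-1$.

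The second step is to verify that $T'$ still satisfies (i)--(v) of Definition~\ref{def:melcon} and defines a graph isomorphic to that defined by $T$. Conditions~(i)--(iv) reduce to checking that $p_\ell'<\ell$ survives the relabeling $p_\ell=s\mapsto i$, which holds because $i<s\le\ell$. For~(v), the key point is that every surviving tuple $t_\ell'$ refers to the ``same'' banana-component of the new construction as $t_\ell$ did---either an unchanged $a_{j'}$ of $b_i$ (possibly shifted in position) or a $c_{j'}$ of $b_s$ now embedded into $b_i'$---so the multiplicity bounds on $T$ transfer verbatim to $T'$. The sole problematic component, the $1$-banana at position $j$ of $b_i$, has been removed along with its unique back-reference $t_s$. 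The graph isomorphism is immediate from the iterative bananification procedure: inserting a $1$-banana and then replacing its single edge by the string $(c_1,\dots,c_m)$ yields the same graph as inserting that string directly in place of the $1$-banana.

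The third step is to iterate. A direct case analysis shows no new witness to a failure of~(vi) is introduced: the components of $b_i'$ carrying value $1$ are either inherited from $c_k=1$ entries of $b_s$ (retaining exactly their original set of references, merely relocated) or from $a_{j'}=1$ entries of $b_i$ with $j'\ne j$ (whose references are simply re-indexed). Hence the number of pairs violating~(vi) strictly decreases at each step, and since it is a non-negative integer the procedure terminates at a reduced construction equivalent to $T$. The main obstacle is the bookkeeping for condition~(v) through the re-indexing; the remaining verifications are combinatorial housekeeping.
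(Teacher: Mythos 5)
Your proof is correct and follows essentially the same route as the paper, which simply observes that "replacing $1$-bananas by their descendants" can be iterated to produce an equivalent reduced construction; you have formalized that merge-and-reindex step and supplied the termination argument that the paper leaves implicit.
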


Reduced constructions suffice in order to define melonic graphs, but it is important
to consider non-reduced constructions as well; these may appear in intermediate
steps of the recursive computation we will obtain in~\S\ref{MGc}.

\smallskip
\subsection{Melonic graphs and bipartite rooted trees}

There is a convenient way to visualize a melonic construction as a labeled bipartite 
tree. Each tuple $((a_1,\dots, a_r), p, k)$ may be viewed as a rooted tree
\begin{center}
\includegraphics[scale=.5]{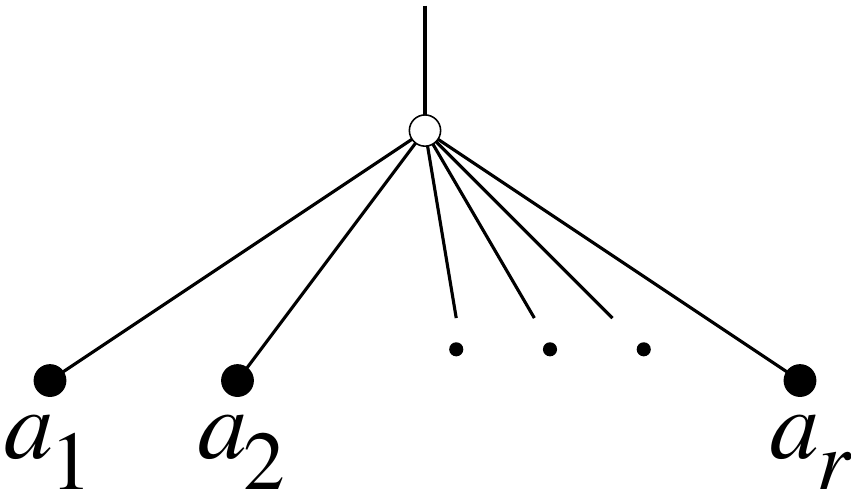}
\end{center}
with (black) leaves labeled by the integers $a_i$. The (white) root will be attached 
to the $k$-th leaf of the $p$-th tree; this grafting procedure builds a rooted tree 
encoding the same information as a melonic construction. Item~(v) in 
Definition~\ref{def:melcon} amounts to the requirement that the valence of a 
(black) node labeled $a$ be at most $a+1$; that is, at most $a$ `descending' 
edges can be adjacent to such a vertex. 

The tree corresponding to a melonic construction has one white node for each
tuple in the construction; thus, the depth of the melonic construction equals the
number of white nodes in the corresponding tree.

\begin{example}
The rooted trees corresponding to the two melonic constructions in 
Example~\ref{ex:twoiso} are
\begin{center}
\includegraphics[scale=.4]{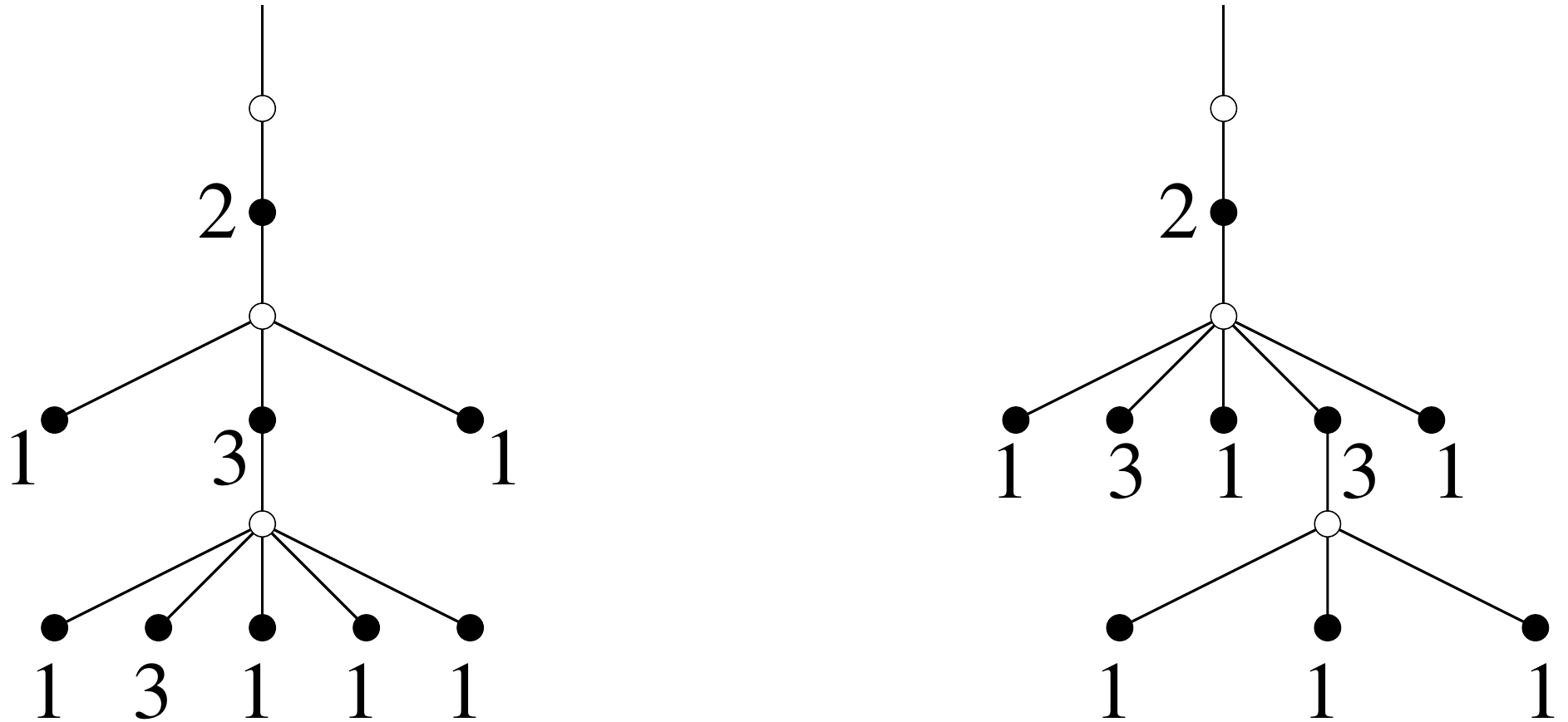}
\end{center}
As noted in Example~\ref{ex:twoiso}, these non-isomorphic labeled trees determine
isomorphic melonic graphs.
\qede\end{example}

The `reduced' condition~(vi) is the requirement that nodes labeled $1$ 
necessarily be leaves. Every tree can be reduced (cf.~Lemma~\ref{lem:redrep}) 
by `sliding up' trees grafted at nodes labeled $1$, as the case encountered in 
Example~\ref{ex:illex} illustrates.
\begin{center}
\includegraphics[scale=.5]{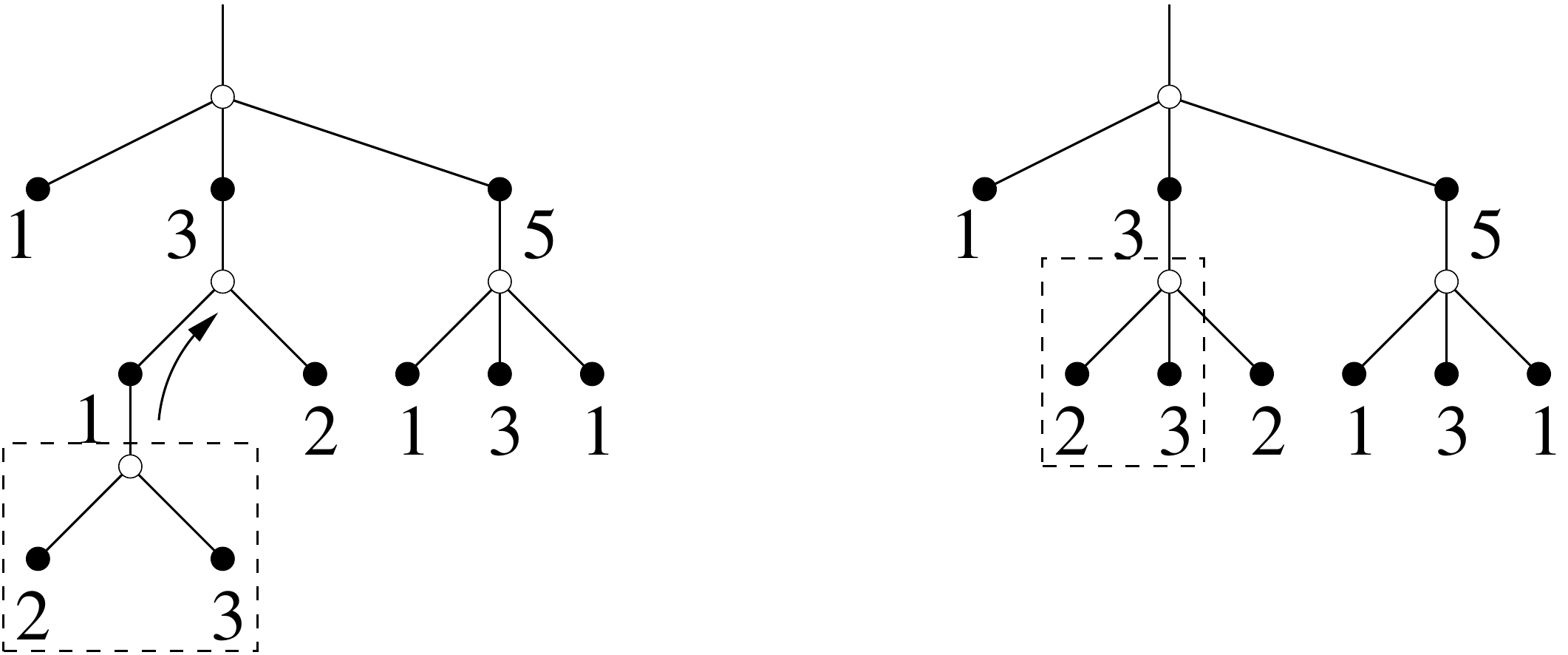}
\end{center}

We also note that the melonic graphs determined in Definition~\ref{def:melgra} have 
arbitrary vertex valences, while in a specific physical theory the valences are 
constrained by the terms in the action. The additional generality is needed for the
recursion formula we will obtain in~\S\ref{MGc}; we will choose families of graphs
with fixed valence in most of the examples illustrating the recursion in~\S\ref{MEcacI},
\ref{MEcacII}, and~\ref{Mav}.

\smallskip
\subsection{Valence-four melonic graphs}

We will be especially interested in the case in which the valence of all internal
vertices of the melonic graph is~$4$. The corresponding melonic constructions
consist of tuples of the type
\[
t_s=((1,3,1),p_s,k_s)
\]
where $k_s=1, 2$, or $3$. The building blocks of these graphs are
\begin{center}
\includegraphics[scale=.5]{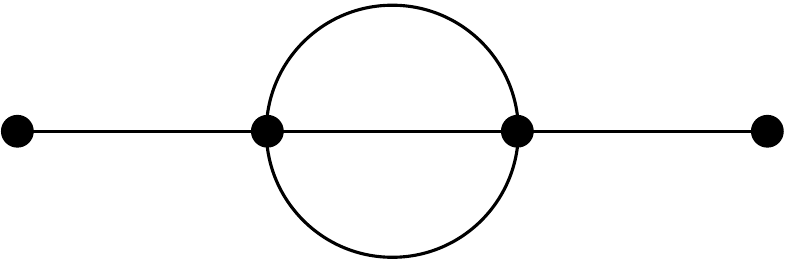}
\end{center}

Up to equivalence, a melonic construction $(t_1,\dots, t_m)$ with 
$t_s=((1,3,1),p_s,k_s)$ as above is determined by the tuple 
$(0,p_2^\pm,\dots, p_n^\pm)$, where each $p_s$ for $s>1$ is marked as $p_s^+$ if
$k_s=2$ and $p_s^-$ if $k_s=1$ or $3$. For example, $(0,1^+,2^+,3^+,4^+)$
indicates that at each stage the new splitting $(1,3,1)$ is performed on one of the
$3$ parallel edges at the previous stage. The corresponding melonic graph may
be drawn as follows:
\begin{center}
\includegraphics[scale=.4]{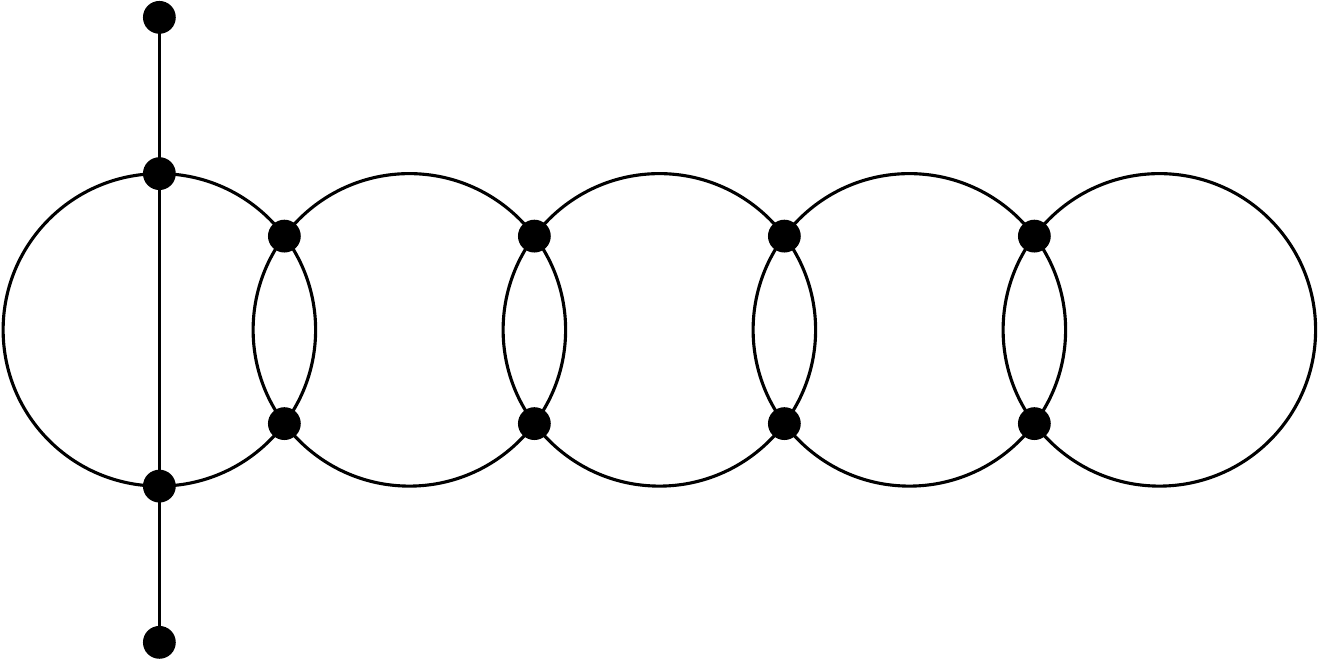}
\end{center}

\smallskip
\subsection{Melonic vacuum bubbles}

We will also consider the `vacuum' flavor of these constructions, in which the
two external vertices are identified; for example
\begin{center}
\includegraphics[scale=.4]{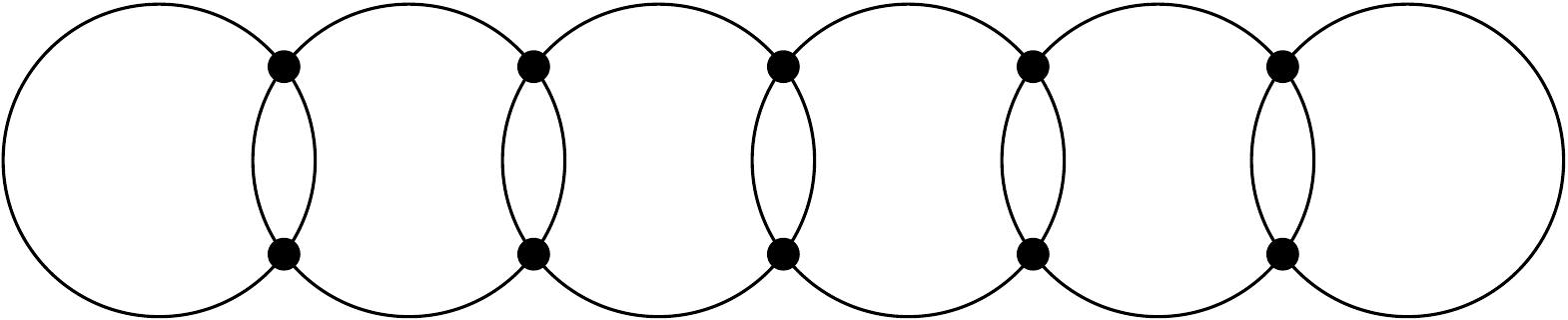}
\end{center}

\begin{defin}\label{def:vmg}
A {\em vacuum melonic graph\/} is a melonic graph without valence-$1$ vertices.
\qede\end{defin}

Vacuum melonic graph in which every vertex has valence~$4$ may be obtained by
iteratively applying the basic bananification $((1,3,1), p_s,k_s)$ starting from a $4$-banana. 
For example, the string of circles depicted above is produced by the construction
\[
(((4),0,1), ((1,3,1),1,1), ((1,3,1),2,2), ((1,3,1),3,2), ((1,3,1),4,2))\quad,
\]
while the construction
\[
(((4),0,1), ((1,3,1),1,1), ((1,3,1),1,1), ((1,3,1),1,1), ((1,3,1),1,1))
\]
yields the vacuum melonic graph
\begin{center}
\includegraphics[scale=.4]{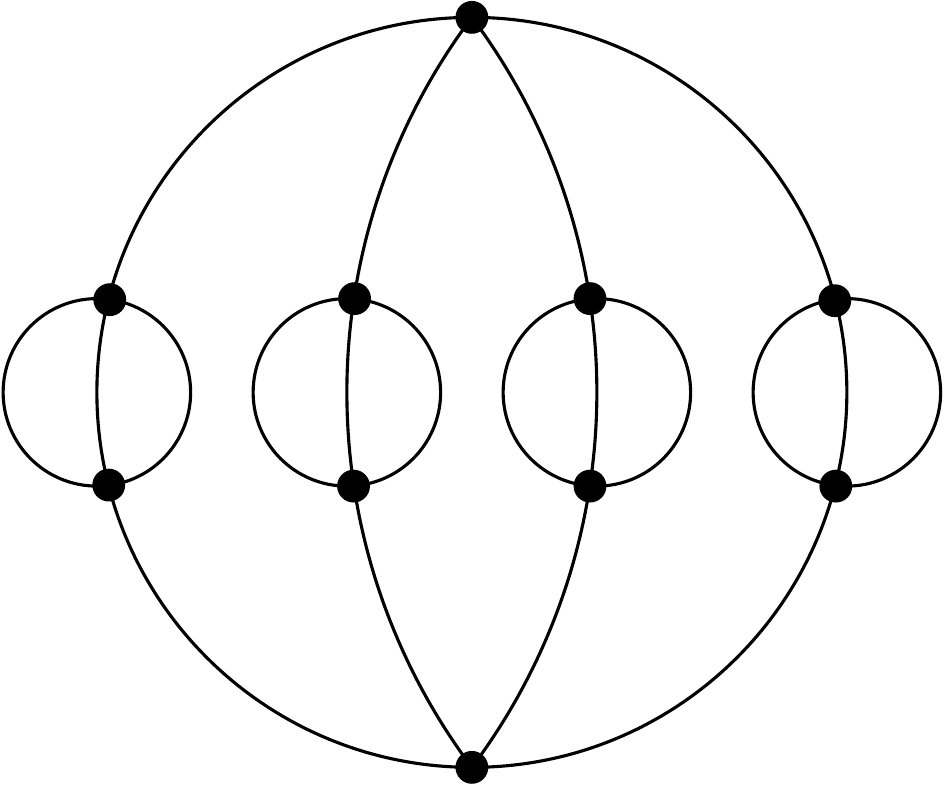}
\end{center}

Note that all vacuum valence-$4$ melonic graphs may also be constructed by starting
from a $2$-banana, performing iteratively the basic $(1,3,1)$ bananifications, and then
removing the two extra valence-$2$ vertices produced at the beginning. Indeed, the
$4$-banana itself admits such a description: the melonic construction
\[
(((2),0,1),((1,3,1),1,1))
\]
produces the $4$-banana graph wth two extra valence-$2$ vertices on one of the edges.
This alternative will be convenient in our computations concerning certain families of
vacuum melonic graphs in~\S\ref{Mav}.

\section{Grothendieck classes of melonic graphs}\label{MGc}

\subsection{The Grothendieck ring of varieties}
For $\cV_\K$ the category of varieties over a field $\K$ (which
we can here assume to be $\K=\Q$), the Grothendieck group
of varieties $K_0(\cV_\K)$ is the abelian group generated by isomorphism
classes $[X]$ of varieties $X\in \cV_\K$ with the inclusion-exclusion relation
\[
[X] =[Y]+ [X\smallsetminus Y]
\]
for closed subvarieties $Y\subset X$. This group may be given a ring structure
by defining $[X]\cdot [Y]=[X\times Y]$ and extending by linearity. 
Grothendieck classes, sometimes referred
to as virtual motives, behave like a universal Euler characteristic for algebraic
varieties. Grothendieck classes usually provide more computable information
about the nature of the motive of a variety. In particular, a Grothendieck class
is Tate if it is contained in the subring generated by the Lefschetz motive
$\bL=[\A^1]$ (the class of an affine line), or equivalently in the ring generated
by $\Tbb:=\Lbb-1$. Since the formulas we will obtain will naturally be polynomials 
in this class, and we will also be interested in expressing them in terms of the
class $\Sbb:=\Lbb-2$, we highlight their definitions.

\begin{defin}\label{def:Tbb}
We will denote by $\Tbb$ the class of the `torus' in the Grothendieck ring of
varieties, i.e., $\Tbb=[\Abb^1\smallsetminus \Abb^0]=\Lbb-1\in K_0(\cV_\K)$.
We will also denote by $\Sbb$ the class of the complement of three points
in $\Pbb^1$: $\Sbb=[\Pbb^1\smallsetminus \{0,1,\infty\}]=\Tbb-1$.
\qede\end{defin}

Varieties whose motive is in the category of
mixed Tate motives will have a Tate Grothendieck class. The converse 
holds conditionally (see \cite{Andre} for a discussion of this point). 

\smallskip
\subsection{Kirchhoff--Symanzik polynomials}

We consider the Kirchhoff--Symanzik polynomial of a graph $G$ with $n$ edges
\begin{equation}\label{KSpoly}
\Psi_G(t)=\sum_{T\subset G} \prod_{e\notin E(T)} t_e, 
\end{equation}
as a polynomial in variables $t=(t_1,\ldots, t_n)$ associated to the
edges of $G$, with the sum taken over all the spanning trees of the
graph. This is a homogeneous polynomial of degree $\ell=b_1(G)$,
the number of loops of $G$. Thus, we can consider the associated
projective graph hypersurface
\begin{equation}\label{XG}
X_G = \{ t=(t_1:\cdots : t_n) \in \P^{n-1}\,|\, \Psi_G(t)=0 \}.
\end{equation}

\smallskip

Up to renormalization of divergences, the Feynman parameter form of the
Feynman integral for the graph $G$, for a massless scalar field theory, is
of the form
\begin{equation}\label{UGp}
 U(G,p)=\frac{\Gamma(n-D\ell/2)}{(4\pi)^{\ell D/2}} \int_{\Delta_n} \frac{V_G(t,p)^{-n+\ell D/2}}{\Psi_G(t)^{D/2}} 
dt_1\cdots dt_n 
\end{equation}
as a function of the external momenta $p$, with $V_G(t,p)$ the second Symanzik polynomial (defined
in terms of cut sets of $G$), 
$D$ the spacetime dimension, and the integration performed on the $n$-simplex. In particular
(modulo divergences) the Feynman integral \eqref{UGp} can be regarded as the integration of an
algebraic differential form on a locus defined by algebraic equations (that is, a period) on the
complement of the hypersurface $X_G$, hence the interest in investigating the nature of the
motive of $\P^{n-1}\smallsetminus X_G$ through the computation of its Grothendieck class.
For a general introductory survey of parametric Feynman integrals and their relations to periods and motives
of graph hypersurfaces see \cite{Mar-book}. 

\smallskip

In the following we will consider both graphs with external edges and graphs (vacuum bubbles)
with no external edges. From the point of view of the parametric Feynman integral, the contribution
of the external edges with their assigned external momenta is encoded only in the 
second Symanzik polynomial $V_G(t,p)$, while the variables $t=(t_e)$ run over internal edges.
Thus, as long as the exponent satisfies $\ell D/2 \geq n$, with $\ell$ the number of loops, $n$ the
number of (internal) edges, and $D$ the spacetime dimension, the Feynman integral is computed on
the complement of the graph hypersurface defined by Kirchhoff--Symanzik 
polynomial $\Psi_G(t)$ that only depends on the internal edges of $G$. The 
Grothendieck class of the affine complement of the hypersurface of a graph $G$ (including
external edges) and of the same graph with the external edges removed are simply 
related by a product by a power of $\bL$ (the class of the affine line), hence it is
equivalent to compute one or the other. For the purpose of computing Grothendieck
classes, considering all graphs (both vacuum bubbles and
non-vacuum graphs) for a massless scalar theory with a self-interaction term of order $N$, so that the
corresponding Feynman graphs have (internal) vertices of valence $N$, is equivalent to
considering all vacuum bubble graphs for a massless scalar theory with self-interaction terms
of orders $v\leq N$. We will work for convenience with graphs with the external edges included.  

\smallskip

Up to the issue of renormalization, the Feynman integral \eqref{UGp} can then
be seen as a period of the graph hypersurface complement. The nature of
the motive of the graph hypersurface complement (detected by its Grothendieck class)
then provides information on the kind of numbers that can be obtained as periods.
The regularization and renormalization of the integral  \eqref{UGp} can also be
dealt with geometrically in terms of blowups or deformations. We will not discuss this
in the present paper and we refer the reader to \cite{Mar-book} for an overview and
to the references therein for more information.

\smallskip
\subsection{Grothendieck classes of graph hypersurface complements}

In previous work, especially~\cite{AluMa11b} and~\cite{AluMa11}, we have focused on the
essentially equivalent information given by the complement of the {\em affine cone\/}
$\hat X_G$ in its ambient affine space, and studied its class in the
Grothendieck group of varieties (the `motivic Feynman rule' of~\cite{AluMa11b}).
For short, we will refer to this class as the {\em Grothendieck class\/} of the graph or of
the corresponding melonic construction.

\begin{defin}\label{def:Grocla}
The {\em Grothendieck class\/} of $G$ (or of any of its melonic constructions) is
the class $\Ubb(G)=[\Abb^n\smallsetminus \hat X_G]\in K_0(\cV_\K)$ of the complement
of $\hat X_G$ in its natural ambient affine space $\A^n$, with $n$ the number of edges 
of $G$.
\qede\end{defin}

By construction, $\Ubb(G)$ is the class of a variety of dimension equal to the number
of edges of $G$.

In this section we will use the melonic constructions introduced in~\S\ref{BtmNc} to 
obtain a recursive computation of the Grothendieck class of a melonic graph. The class only 
depends on the isomorphism class of the resulting graph, so equivalent constructions
produce the same Grothendieck class.

We recall the following properties of $\bU(G)$.

\begin{itemized}
\item This invariant is `multiplicative', in the sense that 
\[
\Ubb(\Gamma_1\cup \Gamma_2) = \Ubb(\Gamma_1)\cdot \Ubb(\Gamma_2)
\]
if $\Gamma_1,\Gamma_2$ are graphs joined at one vertex (or disjoint);
\item For $\Gamma=$a loop, $\Ubb(\Gamma)=\Tbb$ (with $\Tbb$ as in 
Definition~\ref{def:Tbb});
\item For $\Gamma=$a single edge, $\Ubb(\Gamma)=\Lbb=\Tbb+1$;
\item If $\Gamma'$ is obtained from $\Gamma$ by splitting an edge, then 
$\Ubb(\Gamma')=(\Tbb+1)\cdot\Ubb(\Gamma)$;
\item If $\Gamma$ is an $m$-banana, $m>0$, then
\begin{equation}\label{eq:ban}
\Ubb(\Gamma)=\Bb_m:=m\Tbb^{m-1}+\Tbb\cdot\frac{\Tbb^m-(-1)^m}{\Tbb+1}
\end{equation}
(\cite{AluMa09} and \cite[Corollary~5.6]{AluMa11});
\item More generally: if $e$ is not a bridge or a looping edge, then for suitable 
polynomials $f_m$, $g_m$, $h_m$ in $\Tbb$,
\begin{equation}\label{eq:muledges}
\Ubb(\Gamma_{me}) = f_m \Ubb(\Gamma) + g_m \Ubb(\Gamma/e) 
+ h_m \Ubb(\Gamma\smallsetminus e)\quad,
\end{equation}
where:
\begin{itemized}
\item $\Gamma_{me}$ stands for the graph obtained from $\Gamma$ by replacing $e$ with $m$ 
parallel edges joining the same vertices as $e$;
\item $\Gamma\smallsetminus e = \Gamma_{0e}$ is $\Gamma$ with $e$ deleted; and
\item $\Gamma/e$ is $\Gamma$ with $e$ contracted.
\end{itemized}
This is a weak form of a deletion-contraction relation. Inductively, the coefficients
$f_m,g_m,h_m$ are determined by their value for $m\le 2$; in fact, we have
\begin{equation}\label{eq:coefs}
f_m=\frac{\Tbb^m-(-1)^m}{\Tbb+1},\quad
g_m=m\Tbb^{m-1}-\frac{\Tbb^m-(-1)^m}{\Tbb+1},\quad
h_m=\frac{\Tbb^m+(-1)^m \Tbb}{\Tbb+1}
\end{equation}
as obtained in~\cite[Corollary~5.7]{AluMa11}. Formulas for bridges and looping edges
are easier, as they follow immediately from the multiplicativity property.
\end{itemized}

\smallskip
\subsection{Recursion formulas for the Grothendieck classes}

The properties listed above, and particularly identity~\eqref{eq:muledges}, lead to 
recursion formulas for the computation of the Grothendieck class of the melonic graph
associated with a melonic construction, or equivalently of the corresponding tree.
(We will use the two descriptions interchangeably.)
We will abuse language and use both $\Ubb(T)$ and $\Ubb(G)$ for the Grothendieck
class of the melonic graph $G$ resulting from a melonic construction $T$.

The recursion formulas are based on the following observations.

Let $G$ be a melonic graph given by a melonic construction (tree) $T$.
\begin{itemized}
\item 
By Lemma~\ref{lem:redrep}, we may assume that the melonic construction is
reduced, i.e., nodes labeled $1$ are leaves of the tree $T$.
\item
If $T$ has depth $1$, i.e., the corresponding melonic construction consists of a
single tuple $((a_1,\dots, a_r), 0, 1)$, then 
\[
\Ubb(T) = \prod_{i=1}^r \Bb_{a_i} = \prod_{i=1}^r 
\left(a_i \Tbb^{a_i -1}+\Tbb\frac{\Tbb^{a_i}-(-1)^{a_i}}{\Tbb+1}\right)\quad.
\]
\end{itemized}
Indeed, the graph $G$ consists of a string of bananas in this case.

If $T=(t_1,\dots, t_n)$ has higher depth, consider the last state $t_n$. By
construction, the black nodes of $t_n$ are all leaves of $T$. 
\begin{itemized}
\item If $t_n=((a),p,k)$, then an equivalent 
tree of depth $n-1$ is obtained by omitting~$t_n$ and increasing the label
of the $k$-th leaf of $t_p$ by $a-1$.
\begin{center}
\includegraphics[scale=.5]{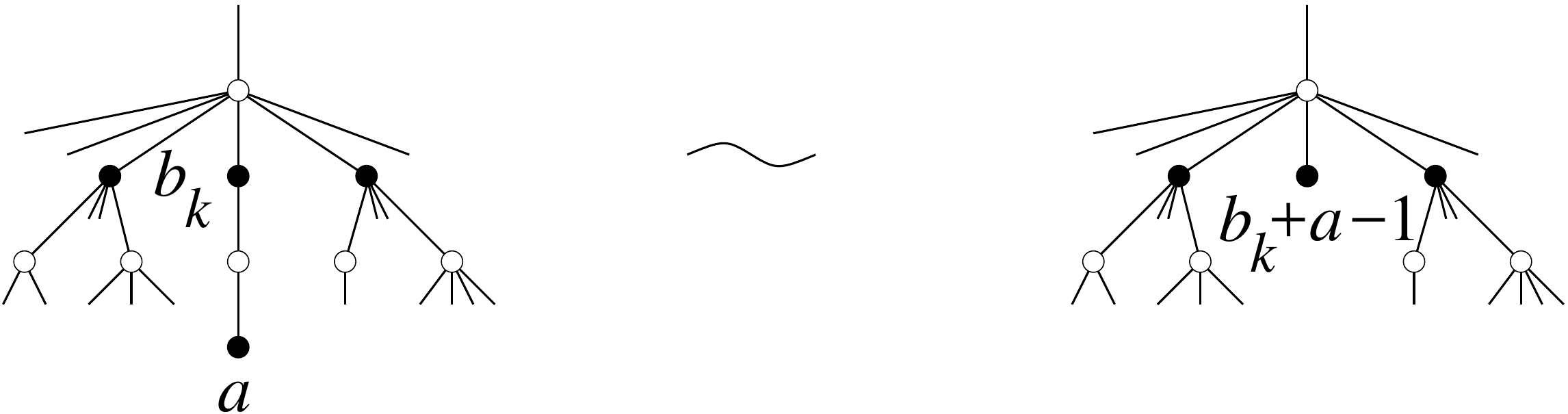}
\end{center}
Indeed, this step of the construction simply replaces $1$ edge in the $k$-th
banana of $t_p$ by $a$ parallel edges.
\item If $t_n=((\underbrace{1,\dots, 1}_{\text{$r$ times}}),p,k)$,
then let $T'=(t_1,\dots, t_{n-1})$ be the construction obtained by omitting the last
stage. Then $\Ubb(T)=(\Tbb+1)^{r-1} \Ubb(T')$. Indeed, the effect of $t_n$ is to
split one edge in the $k$-th banana of $t_p$ a total of $r-1$ times.
\begin{center}
\includegraphics[scale=.4]{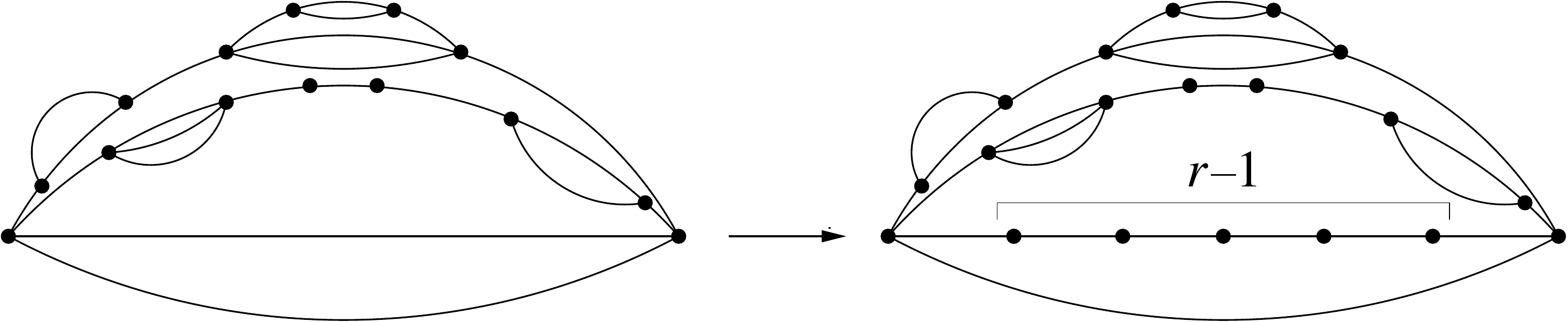}
\end{center}
\item
We may therefore assume that $t_n=((a_1,\dots, a_r), p,k)$ with $r>1$ and such
that $a_m=\max(a_1,\dots, a_r)>1$. The effect of $t_n$ is to replace one edge in
the $k$-th banana of $t_p$ by a string of $a_1,\dots, a_m, \dots, a_r$-bananas;
this is the same as replacing that edge by a string of 
$a_1,\dots, a_{m-1},1,a_{m+1}, \dots, a_r$-bananas, and then replacing the 
resulting single edge $e$ by $a_m$ parallel edges.

\begin{center}
\includegraphics[scale=.4]{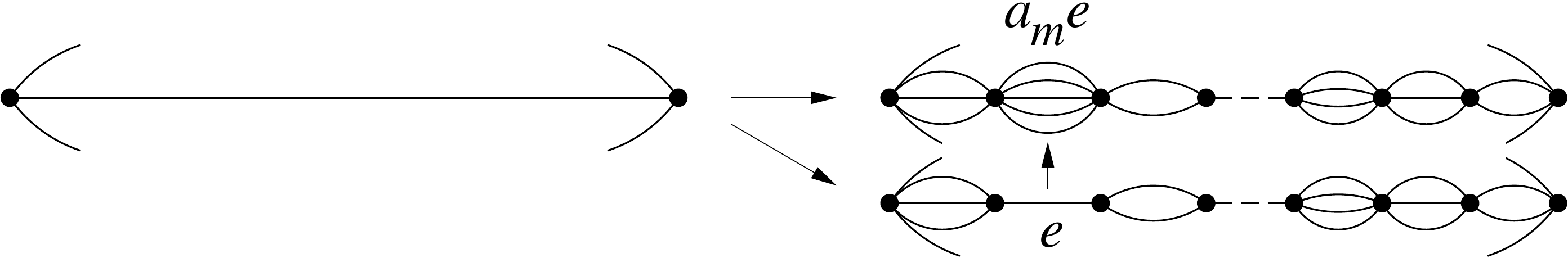}
\end{center}

Let $G'$ be the graph obtained from $G$ by replacing the $a_m$-banana by the 
single edge $e$. With notation as in~\eqref{eq:muledges}, we have $G=G'_{a_me}$.

\begin{claim}
The edge $e$ is not a bridge (or a looping edge) of $G'$.
\end{claim}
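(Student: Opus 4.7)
The plan is to verify the two assertions separately, using the structural description of $G'$ from the melonic construction. For the looping-edge assertion, the edge $e$ is by definition the single edge of the middle ($1$-banana) entry of the string of $r$ bananas inserted by the modified construction $t_n' = ((a_1, \dots, a_{m-1}, 1, a_{m+1}, \dots, a_r), p, k)$. Since every bananification introduces two distinct endpoint vertices for each of its bananas, the two endpoints of $e$ are distinct vertices of $G'$, and hence $e$ is not a loop.

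For the bridge assertion, the strategy is to exhibit an alternative path between the two endpoints of $e$ in $G' \smallsetminus e$. By Lemma~\ref{lem:redrep} I may assume that the melonic construction $T$ for $G$ is reduced. Applying condition~(vi) with $(i,j)=(p,k)$ and $s=n$ (which is legitimate since $t_n$ modifies the $k$-th banana of $t_p$) forces the $k$-th entry $a_{p,k}$ of the tuple $b_p$ in $t_p$ to be at least $2$; let $u,v$ denote the endpoints of the $k$-th banana of $t_p$. In $G'$, only one of the $a_{p,k}$ parallel edges between $u$ and $v$ is replaced by the string containing $e$, so the remaining $a_{p,k}-1\ge 1$ edges persist, possibly further bananified by intermediate tuples $t_s$ with $p<s<n$. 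Denoting the endpoints of $e$ by $w_{m-1}$ and $w_m$ (with the conventions $w_0=u$ if $m=1$ and $w_r=v$ if $m=r$), the first $m-1$ bananas of the inserted string connect $w_{m-1}$ to $u$ and the last $r-m$ bananas connect $w_m$ to $v$. Concatenating these two paths with any surviving alternative $u$-to-$v$ route yields a $w_{m-1}$-to-$w_m$ walk in $G'\smallsetminus e$, showing that $e$ is not a bridge.

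The only point requiring care is that intermediate bananifications acting on the other $a_{p,k}-1$ parallel edges preserve connectivity between $u$ and $v$: this is immediate, since every melonic replacement of a single edge produces a connected subgraph with the same two endpoint vertices. Thus the reduced condition, by guaranteeing at least one extra parallel edge in the $k$-th banana of $t_p$, suffices to yield a non-$e$ path from $u$ to $v$ in $G'$, and the bridge assertion follows. The hypothesis $r>1$ ensures that $e$'s endpoints are truly intermediate vertices of a nontrivial string, so there is no degenerate overlap of the three segments (the two sub-strings and the alternative $u$-$v$ route) that would collapse the argument.
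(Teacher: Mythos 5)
Your proof is correct and follows essentially the same route as the paper: both arguments use the reducedness of $T$ (via condition (vi) applied to $t_n$) to conclude that the $k$-th banana of $t_p$ has at least two parallel edges, so that the surviving parallel edges provide a detour around $e$, while the looping-edge assertion is dispatched by noting the construction never identifies the endpoints of any banana. Your version merely makes explicit the concatenated path that the paper's proof leaves implicit.
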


\begin{proof}
This follows from the assumption that $T$ be reduced. Indeed, as a consequence
the $k$-th banana of $t_p$ does not consist of a single edge; hence removing one
edge of this banana does not disconnect the graph. Since the edge $e$ is one
edge in a subdivision of one edge of the $k$-th banana of $t_p$, removing it does
not disconnect the graph. (And the construction never produces looping edges,
therefore $e$ is not a looping edge.)
\end{proof}

It follows that we can use~\eqref{eq:muledges} to relate $\Ubb(G)$ to the Grothendieck
classes of $G'$ and associated graphs:
\begin{equation}\label{eq:recurG}
\Ubb(G) = f_{a_m} \Ubb(G') + g_{a_m} \Ubb(G'/e) 
+ h_{a_m} \Ubb(G'\smallsetminus e)
\end{equation}
with $f_{a_m},g_{a_m},h_{a_m}$ as in~\eqref{eq:coefs}.

Now:

--- $G'$ is a melonic graph: its construction $T'$ is obtained from $T$ by replacing
\begin{align*}
t_n&=((a_1,\dots, a_{m-1},a_m,a_{m+1},\dots, a_r),p,k)
\intertext{by}
t'_n&=((a_1,\dots, a_{m-1},1,a_{m+1},\dots, a_r),p,k)\quad.
\end{align*}
Pictorially:
\begin{center}
\includegraphics[scale=.5]{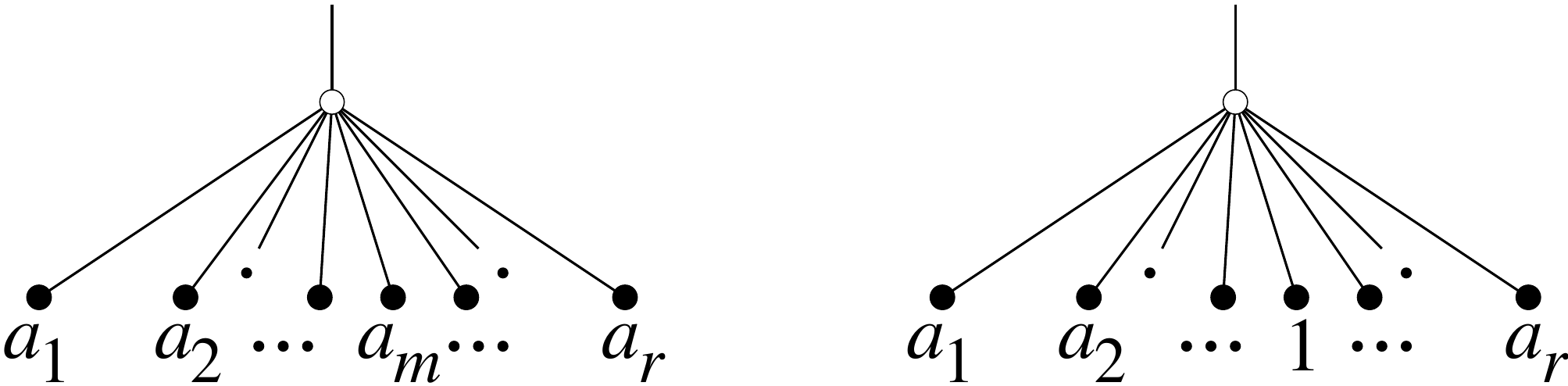}
\end{center}

---The contraction $G'/e$ is also a melonic graph: its construction $T''$ is obtained 
from~$T$ by omitting $a_m$ in $t_n$, i.e., replacing 
\[
t_n=((a_1,\dots, a_{m-1},a_m,a_{m+1},\dots, a_r),p,k)
\]
by
\[
t_n''=((a_1,\dots, a_{m-1},a_{m+1},\dots, a_r),p,k)\quad.
\]
Since $r>1$, the tuple $(a_1,\dots, a_{m-1},a_{m+1},\dots, a_r)$ is non-empty,
as needed (cf.~Definition~\ref{def:melcon}).
Pictorially:
\begin{center}
\includegraphics[scale=.5]{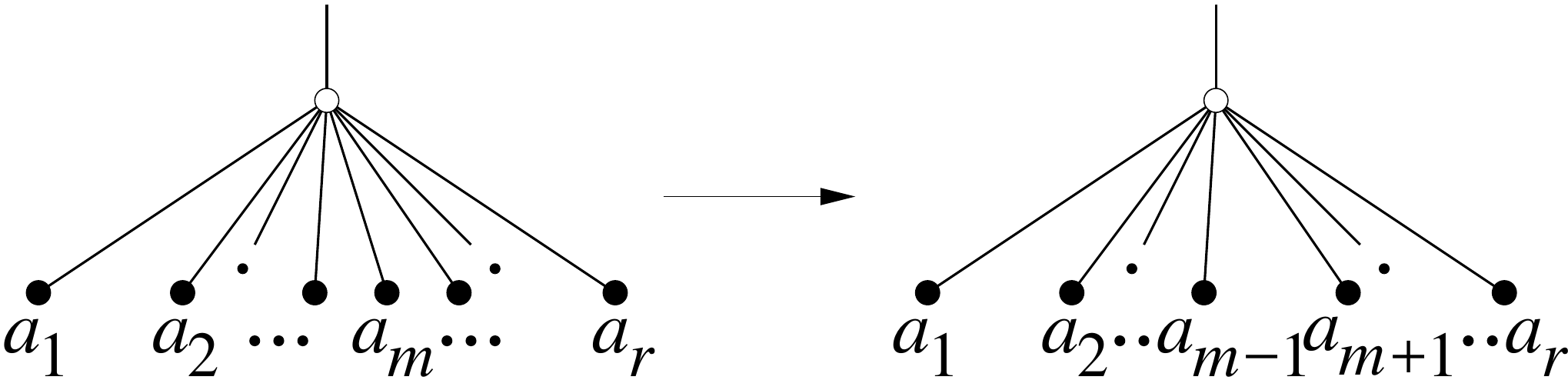}
\end{center}

---The deletion $G'\smallsetminus e$ is {\em not\/} a melonic graph; it is obtained by
replacing one edge of the $k$-th banana of $t_p$ by two disconnected strings of
bananas attached at the vertices of that edge:
\begin{center}
\includegraphics[scale=.5]{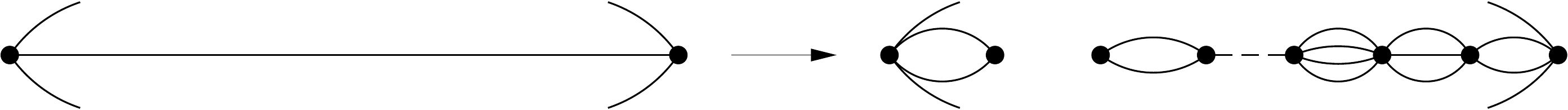}
\end{center}
Let $T'''$ be the list obtained from $T$ by omitting $t_n$ and decreasing
by $1$ the order~$b_k$ of the $k$-th banana in $t_p$. Since $T$ is assumed to be reduced,
$b_k>1$; therefore, $T'''$ is still a melonic construction. (Note that, however, $T'''$ may be
{\em non-reduced.\/} This is the reason forcing us to consider non-reduced melonic 
constructions.) The graph $G'\smallsetminus e$
is obtained from the melonic graph corresponding to $T'''$ by attaching two strings of bananas
to two vertices, and it follows that
\[
\Ubb(G'\smallsetminus e) = \left(\prod_{i=1}^{m-1} \Bb_{a_i}\right)
\left(\prod_{i=m+1}^{r} \Bb_{a_i}\right) \Ubb(T''')\quad.
\]
In conclusion, \eqref{eq:recurG} may be rewritten
\[
\Ubb(T)=f_{a_m}\Ubb(T')+g_{a_m}\Ubb(T'') + \left(\prod_{i=1}^{m-1} \Bb_{a_i}\right)
\left(\prod_{i=m+1}^{r} \Bb_{a_i}\right) h_{a_m}\Ubb(T''')\quad,
\]
or, more explicitly:

\begin{prop}
With notation as above,
\begin{align*}
\Ubb(T) 
&=\frac{\Tbb^{a_m}-(-1)^{a_m}}{\Tbb+1} \Ubb(T') \\
&+\left(a_m\Tbb^{a_m-1}-\frac{\Tbb^{a_m}-(-1)^{a_m}}{\Tbb+1}\right)\Ubb(T'') \\
&+\left(\prod_{i=1}^{m-1} \Bb_{a_i}\right)
\left(\prod_{i=m+1}^{r} \Bb_{a_i}\right) 
\frac{\Tbb^{a_m}+(-1)^{a_m} \Tbb}{\Tbb+1}\Ubb(T''')\quad.
\end{align*}
\end{prop}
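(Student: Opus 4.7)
The plan is to derive the formula directly from the weak deletion–contraction relation \eqref{eq:muledges}, after a sequence of reductions that isolate the "generic" case sketched in the discussion preceding the statement. By Lemma~\ref{lem:redrep} I may freely assume the melonic construction $T=(t_1,\dots,t_n)$ is reduced, and this assumption will be essential when I need to invoke \eqref{eq:muledges}, which requires a non-bridge, non-looping edge.

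First I would dispose of the degenerate cases for the last stage $t_n$. If $T$ has depth $1$, the graph is a single string of bananas and the class follows from multiplicativity and the banana formula \eqref{eq:ban}. If $t_n=((a),p,k)$ is a singleton, then $T$ is equivalent to a depth $(n-1)$ construction obtained by bumping the $k$-th leaf of $t_p$ by $a-1$, reducing to a lower-depth case. If $t_n=((1,\dots,1),p,k)$ with $r$ entries, the last stage merely subdivides an edge $r-1$ times, so by the edge-splitting property $\Ubb(T)=(\Tbb+1)^{r-1}\Ubb(T\smallsetminus t_n)$. Thus the substantive case is $t_n=((a_1,\dots,a_r),p,k)$ with $r>1$ and $a_m:=\max(a_i)>1$.

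In this generic case I decompose the effect of $t_n$ in two steps: replace the target edge by a string of $a_1,\dots,a_{m-1},1,a_{m+1},\dots,a_r$-bananas, yielding an auxiliary melonic graph $G'$ with a distinguished single edge $e$ (the "$1$-banana" in the middle); then replace $e$ by $a_m$ parallel edges, yielding $G=G'_{a_m e}$. The crucial check is that $e$ is neither a bridge nor a looping edge of $G'$: since $T$ is reduced, the $k$-th banana of $t_p$ is not a $1$-banana, so its ambient cycle survives after removing $e$; and melonic constructions never produce looping edges. With this verified, \eqref{eq:muledges} applies and yields
\[
\Ubb(G)=f_{a_m}\Ubb(G')+g_{a_m}\Ubb(G'/e)+h_{a_m}\Ubb(G'\smallsetminus e).
\]

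It remains to identify the three terms with melonic constructions. The graph $G'$ corresponds to the construction $T'$ obtained by changing $a_m$ to $1$ in $t_n$; the contraction $G'/e$ corresponds to $T''$, obtained by deleting the entry $a_m$ from $t_n$ (a valid tuple because $r>1$); and $G'\smallsetminus e$ is the one-vertex join of two strings of bananas (with orders $a_1,\dots,a_{m-1}$ and $a_{m+1},\dots,a_r$) with the melonic graph for the construction $T'''$ obtained by omitting $t_n$ and decreasing the order of the $k$-th banana of $t_p$ by one—again a valid melonic construction (though possibly non-reduced) precisely because $T$ was reduced, so $b_k>1$. By multiplicativity of $\Ubb$ and \eqref{eq:ban},
\[
\Ubb(G'\smallsetminus e)=\Bigl(\prod_{i=1}^{m-1}\Bb_{a_i}\Bigr)\Bigl(\prod_{i=m+1}^{r}\Bb_{a_i}\Bigr)\Ubb(T''').
\]
Substituting the explicit polynomials \eqref{eq:coefs} for $f_{a_m},g_{a_m},h_{a_m}$ produces the claimed identity. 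The main obstacle, and the reason reduced constructions are indispensable, is the verification that $e$ is not a bridge in $G'$; the remainder of the argument is essentially bookkeeping that matches graph-theoretic operations to moves on melonic constructions.
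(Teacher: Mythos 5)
Your proposal is correct and follows essentially the same route as the paper: reduce to a reduced construction, dispose of the depth-one and degenerate last-stage cases, isolate the distinguished edge $e$ in $G'$, verify via reducedness that $e$ is neither a bridge nor a looping edge, and then apply~\eqref{eq:muledges} together with multiplicativity and~\eqref{eq:coefs} to identify the three terms with $T'$, $T''$, $T'''$. No gaps.
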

Since $T'$, $T''$, $T'''$ all correspond to melonic graphs with fewer edges than $G$,
the corresponding Grothendieck classes are recursively known, and determine $\Ubb(G)
=\Ubb(T)$.
\end{itemized}

\begin{corol}\label{cor:Tate}
The graph hypersurface of a melonic graph $G$ determines a mixed Tate motive;
the Grothendieck class $\Ubb(G)$ is a polynomial in $\Lbb$ of degree equal to the
number of edges of $G$.
\end{corol}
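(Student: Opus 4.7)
The plan is to prove both assertions by induction on the number of edges $n$ of $G$, exploiting the recursion of the preceding Proposition. The base case $n=1$ is immediate: a single edge has $\Ubb(G) = \Lbb$. More generally, if $G$ is a string of bananas arising from a depth-$1$ construction, then $\Ubb(G) = \prod_{i=1}^r \Bb_{a_i}$ with $n = \sum_i a_i$, and~\eqref{eq:ban} shows each $\Bb_{a_i}$ is a polynomial in $\Tbb = \Lbb - 1$ of degree $a_i$ with leading term $\Tbb^{a_i}$, so $\Ubb(G)$ is a polynomial in $\Lbb$ of degree $n$ with leading term $\Tbb^n$.

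For the inductive step, fix a reduced construction $T$ for $G$ (Lemma~\ref{lem:redrep}). The preliminary simplifications discussed before the Proposition---eliminating a terminal tuple $((a),p,k)$ (equivalent to a shorter construction with the same edge count) or $((1,\dots,1),p,k)$ (equivalent to an $(\Tbb+1)^{r-1}$ factor times a construction with $r-1$ fewer edges)---reduce us, after finitely many steps, either to a depth-$\leq 1$ construction (handled by the base case) or to a tuple $t_n = ((a_1,\dots,a_r),p,k)$ with $r>1$ and some $a_m \geq 2$, to which the Proposition applies. The constructions $T'$, $T''$, $T'''$ on the right-hand side describe melonic graphs with strictly fewer edges than $G$ (fewer by $a_m-1$, $a_m$, and $\sum_i a_i$ respectively, each at least~$1$). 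By induction, each of $\Ubb(T')$, $\Ubb(T'')$, $\Ubb(T''')$ is a polynomial in $\Lbb$; since the coefficients $f_{a_m}$, $g_{a_m}$, $h_{a_m}$, $\prod_{i\neq m} \Bb_{a_i}$ are themselves polynomials in $\Tbb$, $\Ubb(T)$ is likewise a polynomial in $\Lbb$.

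For the degree claim, compare the three summands: the first contributes degree $(a_m-1) + (n - a_m + 1) = n$ with leading term $\Tbb^n$ (using the inductive leading-term statement for $\Ubb(T')$ together with the fact that $f_{a_m}$ is monic of degree $a_m - 1$); the second contributes degree $(a_m-1) + (n - a_m) = n-1$; and the third contributes degree $(\sum_i a_i - 1) + (n - \sum_i a_i) = n-1$. Hence no lower-degree term can cancel the leading $\Tbb^n$, establishing that $\Ubb(G)$ has degree exactly $n$ as a polynomial in $\Lbb$.

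For the mixed Tate motive assertion: the recursive formulas~\eqref{eq:muledges} and~\eqref{eq:ban} on which the Proposition rests arise in~\cite{AluMa11} from honest geometric decompositions of the affine graph hypersurface complement into strata that are affine bundles over complements of hypersurfaces of graphs with fewer edges. The main obstacle would be lifting the formal Grothendieck-class relation to an actual motivic one, but for these particular operations the required motivic decompositions are already supplied in~\cite{AluMa11}, so the mixed-Tate property propagates through the induction alongside the computation of $\Ubb(G)$.
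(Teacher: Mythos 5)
Your proof is correct, but it takes a more laborious route than the paper for the degree claim. The paper's argument is two lines: the recursion shows $\Ubb(G)$ is a polynomial in $\Tbb$, hence in $\Lbb$; and since $\Ubb(G)$ is \emph{by definition} the class of a variety of dimension $n$ (the complement of $\hat X_G$ in $\Abb^n$), a polynomial in $\Lbb$ representing it must have degree exactly $n$. You instead strengthen the inductive hypothesis to ``monic of degree $n$ in $\Tbb$'' and track leading terms through all three summands of the recursion (and through the preliminary reductions); your bookkeeping --- degrees $n$, $n-1$, $n-1$ for the $f_{a_m}\Ubb(T')$, $g_{a_m}\Ubb(T'')$, and banana-product terms respectively --- is accurate, and the argument is self-contained, with the small bonus of showing the polynomial is monic in $\Tbb$. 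What you lose is brevity; what the paper's dimension argument quietly uses is the general fact that the top-degree coefficient of a Tate class of an $n$-dimensional variety is positive (the number of top-dimensional components), which your explicit computation replaces. On the mixed-Tate assertion you are actually \emph{more} careful than the paper, whose proof silently passes from ``the class lies in $\Zbb[\Lbb]$'' to ``the motive is mixed Tate'' despite having noted earlier that this implication only holds conditionally in general; your appeal to the geometric (affine-bundle) decompositions underlying the formulas of \cite{AluMa11} is the right way to make the motivic statement honest, though you assert rather than verify that those decompositions are carried out at the motivic level there.
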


\begin{proof}
The recursion implies immediately that $\Ubb(G)$ is a polynomial in $\Tbb$,
therefore in $\Lbb=\Tbb+1$. 
By construction, $\Ubb(G)$ is the class in the Grothendieck group of a variety
of dimension equal to the number of edges of $G$, so the statement follows.
\end{proof}

\smallskip
\subsection{Positivity and log-concavity}

The class of a melonic graph can of course also be written as a polynomial in 
the class $\Sbb=\Tbb-1=[\Pbb^1\smallsetminus \{0,1,\infty\}]$. Remarkably, these 
polynomials are `positive', in the following sense.

\begin{corol}
Let $G$ be a melonic graph. Then $\Ubb(G)=P(\Sbb)$ for a polynomial
$P(t)=a_n t^n+\cdots + a_1 t + a_0\in \Zbb[t]$ with nonnegative integer coefficients.
\end{corol}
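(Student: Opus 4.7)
The plan is to argue by induction on the number of edges $e(G)$ of $G$, using the recursive formula just established. The base case is the single edge, where $\Ubb(G)=\Lbb=\Sbb+2$. For the inductive step, choose a reduced melonic construction $T$ for $G$ (which exists by Lemma~\ref{lem:redrep}). If the final tuple has the form $t_n=((a),p,k)$, replace $T$ by the equivalent shorter construction of the same graph (by increasing the $k$-th leaf label of $t_p$) and iterate; the process terminates in either a depth-one construction, whose graph is a string of bananas with $\Ubb=\prod_i\Bb_{a_i}$, or a construction whose final tuple falls into one of the two remaining cases. If $t_n=((1,\dots,1),p,k)$, the identity $\Ubb(T)=(\Sbb+2)^{r-1}\Ubb(T')$ reduces to the inductive hypothesis applied to~$T'$, whose graph has strictly fewer edges. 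In the general case, the proposition expresses $\Ubb(T)$ as
\[
f_{a_m}\Ubb(T')+g_{a_m}\Ubb(T'')+\Bigl(\prod_{i\neq m}\Bb_{a_i}\Bigr)h_{a_m}\Ubb(T'''),
\]
and each of $T',T'',T'''$ corresponds to a melonic graph with strictly fewer edges than $G$, so the inductive hypothesis applies to all three.

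The substantive content of the proof is therefore the non-negativity of the coefficient polynomials $\Bb_m$, $f_m$, $g_m$, $h_m$ when expressed in $\Sbb=\Tbb-1$; this is not evident, since for instance $f_m=\Tbb^{m-1}-\Tbb^{m-2}+\cdots\pm 1$ has alternating signs as a polynomial in $\Tbb$. The key identity is a clean two-step recurrence for~$f_m$: subtracting the defining relations $(\Tbb+1)f_{m+2}=\Tbb^{m+2}-(-1)^m$ and $(\Tbb+1)f_m=\Tbb^m-(-1)^m$ and cancelling $\Tbb+1$ yields
\[
f_{m+2}-f_m=\Tbb^m(\Tbb-1)=\Sbb(\Sbb+1)^m,
\]
visibly a polynomial in $\Sbb$ with non-negative integer coefficients. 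Combined with the base cases $f_1=1$ and $f_2=\Sbb$, induction on $m$ yields non-negativity of every $f_m$. A direct factorization gives $h_m=\Tbb f_{m-1}=(\Sbb+1)f_{m-1}$ (with the convention $f_0=0$), so $h_m$ inherits non-negativity. For $g_m$, the identity $f_{m+1}=\Tbb f_m+(-1)^m$ yields
\[
g_{m+1}=(m+1)\Tbb^m-f_{m+1}=\Tbb g_m+\bigl(\Tbb^m-(-1)^m\bigr)=(\Sbb+1)g_m+(\Sbb+2)f_m,
\]
so starting from $g_1=0$ non-negativity propagates by induction. Finally $\Bb_m=m(\Sbb+1)^{m-1}+(\Sbb+1)f_m$ is non-negative as a sum of non-negatives.

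The main obstacle I anticipate is the non-negativity of $f_m$ in $\Sbb$: the $\Tbb$-expansion has strict sign alternation, so the positivity in $\Sbb$ reflects a cancellation phenomenon that only becomes transparent through the identity $f_{m+2}-f_m=\Sbb(\Sbb+1)^m$. Once $f_m$ is controlled, the positivity of $g_m$, $h_m$, and $\Bb_m$ follows with little additional effort, and the induction on $e(G)$ assembled in the first paragraph closes the argument.
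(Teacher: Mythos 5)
Your argument is correct, and its overall skeleton coincides with the paper's: both reduce the statement, via the recursion, to the positivity in $\Sbb$ of the banana classes $\Bb_m$ and of the coefficients $f_m,g_m,h_m$. Where you diverge is in how that positivity is established. The paper writes down an explicit expansion of $\frac{\Tbb^m-(-1)^m}{\Tbb+1}$ as a double sum of binomial coefficients in powers of $\Sbb$ (a computation it leaves to the reader), deduces positivity of $\Bb_m$, $f_m$, $h_m$ directly from it, and then handles $g_m=m\Tbb^{m-1}-f_m$ by a term-by-term comparison of binomial coefficients. You instead prove positivity by recurrences in $m$: the identity $f_{m+2}-f_m=\Sbb(\Sbb+1)^m$ together with $f_1=1$, $f_2=\Sbb$; the factorization $h_m=(\Sbb+1)f_{m-1}$; and the relation $g_{m+1}=(\Sbb+1)g_m+(\Sbb+2)f_m$ with $g_1=0$ (all of which I have checked). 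Your route is arguably cleaner and fully self-contained — in particular it dispenses with the explicit expansion and the binomial inequality, and it makes the positivity of $g_m$ (the only delicate case in the paper) essentially automatic — while the paper's approach has the advantage of producing closed formulas for the actual $\Sbb$-coefficients, which is more information than bare nonnegativity. One small bookkeeping remark: your outer induction is on the number of edges, but the preliminary step of absorbing a final tuple $((a),p,k)$ into an earlier stage does not decrease the edge count, so strictly speaking you are inducting on the pair (number of edges, depth of the construction); this is harmless but worth stating.
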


\begin{proof}
Given the recursion, it suffices to observe that the classes of banana graphs,
$\Bb_m(\Tbb)=\Bb_m(\Sbb+1)$, and the coefficients $f_m,g_m,h_m$ are all
positive as polynomials in~$\Sbb$. The key observation is the following.

\begin{claim}\label{claim:pos}
The class 
\[
\frac{\Tbb^m-(-1)^m}{\Tbb+1}=\frac{(\Sbb+1)^m-(-1)^m}{\Sbb+2}
\]
is positive in $\Sbb$; in fact,
\[
\frac{\Tbb^m-(-1)^m}{\Tbb+1} = \sum_{j=1}^{m-1} \sum_{i=1}^{\frac m2}
\binom{m-2i}{j-1} \,\Sbb^j+\left\{\begin{aligned} 0&\quad \text{if $m$ is even} \\
1&\quad \text{if $m$ is odd}\end{aligned}\right.\quad.
\]
\end{claim}

This is a straightforward computation, left to the reader. Given Claim~\ref{claim:pos},
it follows immediately that
\[
\Bb_m=m\Tbb^{m-1}+\Tbb\frac{\Tbb^m-(-1)^m}{\Tbb+1}\quad,\quad
f_m=\frac{\Tbb^m-(-1)^m}{\Tbb+1}\quad,\quad
h_m=\frac{\Tbb^m+(-1)^m \Tbb}{\Tbb+1}
\]
are positive in $\Sbb$. As for 
\[
g_m=m\Tbb^{m-1}-\frac{\Tbb^m-(-1)^m}{\Tbb+1}\quad,
\]
the required positivity follows from the fact that for all $m,i\ge 1,j$
\[
\binom {m-1}j\ge \binom{m-2i}{j-1}
\]
which is clear, as $\binom {m-1}j= \binom{m-2}{j-1}+\binom{m-2}j\ge \binom{m-2}{j-1}
\ge \binom{m-2i}{j-1}$ for $i\ge 1$.
\end{proof}

\begin{example}
The melon-tadpole graph of Figure~\ref{meltadFig} consists of a $4$-banana
tadpole, with class $\Bb_4=(\Tbb+1)(\Tbb^2+2\Tbb-1) \Tbb$, and of a melonic
part which may be constructed by
\[
\big( ((4),0,1),((1,3,1),1,1) \big)
\]
i.e., by the labeled tree
\begin{center}
\includegraphics[scale=.4]{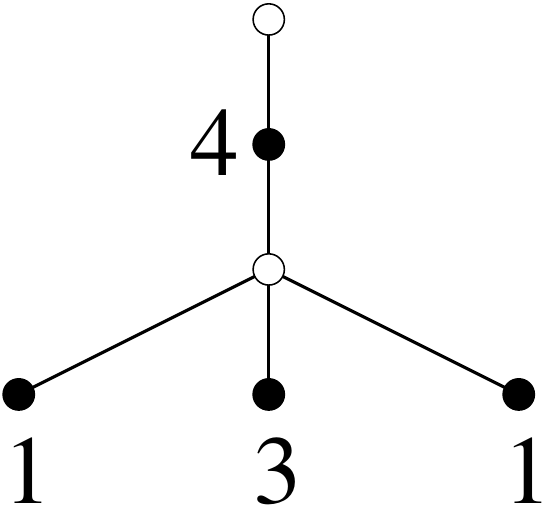}
\end{center}
The recursion obtained above computes the Grothendieck class of this melonic
graph to be
\[
\Tbb^2(\Tbb+1)^4(\Tbb^2+3\Tbb-2)\quad.
\]
The conclusion is that the Grothendieck class for the graph in Figure~\ref{meltadFig}
equals
\[
\Tbb^3(\Tbb+1)^7(\Tbb^2+3\Tbb-2)(\Tbb^2+2\Tbb-1)
=(\Sbb+1)^3(\Sbb+2)^5(\Sbb^2+4\Sbb+2)(\Sbb^2+5\Sbb+2)\quad:
\]
indeed, the graph may be obtained by splitting one edge in each of the two components
(which has the effect of multiplying each Grothendieck class by $\Tbb+1$), and then
joining the resulting graphs at the newly created vertices, i.e., multiplying together the 
two resulting Grothendieck classes.
\qede\end{example}

\smallskip

Positivity as a polynomial in the class $\Tbb$ is a torification of the Grothendieck class,
which may or may not be induced by a geometric torification of the underlying variety,
see \cite{ManMar}. The presence of a torified Grothendieck class has consequences
in terms of ``geometry over the field with one element", \cite{BejMar}, \cite{ManMar}.
One can similarly ask whether the positivity of the Grothendieck class as a function 
of $\Sbb$ is induced by an underlying geometric structure and whether such a 
structure carries arithmetic significance. For example, the Grothendieck class of
the moduli spaces $\mathcal M_{0,n}$ of genus zero curves with marked points have the
simple expression 
$$ [\cM_{0,n}] = \binom{\Sbb}{n-3} (n-3)! $$
in terms of the class $\Sbb$, with $[\mathcal M_{0,4}]=\Sbb$. 
However, these classes are not positive in $\Sbb$, while
the classes of the $\overline{ \mathcal M}_{0,n}$ moduli spaces satisfy positivity (both in $\Sbb$
and in $\Tbb$), see \cite{ManMar}.

\smallskip

Another feature of the polynomials expressing the classes in terms of $\Sbb$ appears
to be the following.

\begin{conj}
Let $G$ be a melonic graph, and let $\Ubb(G)=a_0+a_1 \Sbb+\cdots + a_n \Sbb^n$ be 
its Grothendieck class. Then the sequence $a_0,a_1,\dots, a_n$ is log concave, i.e.,
$a_{i-1} a_{i+1}\le a_i^2$ for $0<i<n$.
\end{conj}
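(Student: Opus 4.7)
The plan is to induct on the depth of the melonic construction using the recursion established in the previous subsection, with base case the depth-1 constructions (strings of bananas). For a single $m$-banana, one checks that $\Bb_m(\Sbb)$ is log-concave directly from the closed form~\eqref{eq:ban}: one can derive a two-term recurrence for the coefficients and verify the inequality $a_{i-1}a_{i+1}\le a_i^2$ by induction on $m$. A classical theorem states that the convolution of log-concave sequences with no internal zeros is log-concave, so products $\prod_i \Bb_{a_i}$---and hence all depth-1 constructions---inherit log-concavity from the single-banana case.

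For the inductive step, the recursion
$$
\Ubb(T)=f_{a_m}\Ubb(T')+g_{a_m}\Ubb(T'')+\Bigl(\prod_{i\ne m}\Bb_{a_i}\Bigr)h_{a_m}\Ubb(T''')
$$
expresses $\Ubb(T)$ as a nonnegative linear combination, with log-concave polynomial coefficients in $\Sbb$, of Grothendieck classes of three melonic graphs with strictly fewer edges. By induction each summand is individually log-concave in $\Sbb$.

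The essential difficulty---and, I expect, where the proof will stand or fall---is that \emph{sums} of log-concave polynomials are not in general log-concave. One must therefore exploit structure specific to this situation. Two natural lines of attack present themselves. The first is to establish an interlacing-type relation among the coefficient sequences of $\Ubb(T')$, $\Ubb(T'')$, and $\Ubb(T''')$, reflecting the fact that these three graphs differ by very controlled local modifications (reducing the $a_m$-banana in $G=G'_{a_me}$ to a single edge, contracting that edge, or deleting it together with its adjacent strings of bananas); a Newton-type inequality for interlacing sequences, combined with the explicit positivity of $f_{a_m}, g_{a_m}, h_{a_m}$ in $\Sbb$, would then close the induction. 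The second, more ambitious approach is to work in the Lorentzian polynomial framework of Br\"and\'en--Huh, which is stable under a richer class of operations and which ties log-concavity to Hodge--Riemann relations on matroid Chow rings---a direction naturally suggested by the paper's allusion to Huh's work and by the graphic matroid structure intrinsic to the Kirchhoff--Symanzik polynomial $\Psi_G(t)$. Here one would attempt to realize $\Ubb(G)(\Sbb)$ as the univariate specialization of a Lorentzian polynomial attached to an auxiliary matroid or geometric object built functorially from the melonic construction. In either approach the real content is to identify additional geometric or combinatorial input that forces the recursion to respect log-concavity; the recursion alone is formally insufficient, and a successful proof will almost certainly reveal the richer underlying structure anticipated in the introduction.
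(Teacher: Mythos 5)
First, note that the statement you are attempting is presented in the paper as a \emph{conjecture}: the authors do not prove it, and offer only computational evidence (verification for all melonic graphs with at most $13$ edges, plus hundreds of further examples from the families they study). So there is no proof in the paper to compare yours against, and any complete argument would be genuinely new.

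Your proposal, however, does not supply one, and you essentially say so yourself. Even the base case is only sketched: the log-concavity of $\Bb_m(\Sbb)$ is asserted via an unspecified ``two-term recurrence,'' and the appeal to closure of log-concavity under convolution also requires the factor sequences to have no internal zeros, which you do not check (it does hold, by the explicit expansion in the paper's positivity argument, Claim~\ref{claim:pos}, but it must be said). The decisive gap is exactly the one you flag: the recursion expresses $\Ubb(T)$ as a positive combination of three log-concave polynomials, and log-concavity is not preserved under addition, so the induction does not close. Neither proposed remedy is executed. For the interlacing route you would need to state a concrete relation among the coefficient sequences of $\Ubb(T')$, $\Ubb(T'')$, $\Ubb(T''')$ strong enough to force log-concavity of the weighted sum; no candidate relation is formulated, let alone tested. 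For the Lorentzian route you would need to exhibit a homogeneous multivariate polynomial, Lorentzian in the sense of Br\"and\'en--Huh, specializing to $\Ubb(G)(\Sbb)$; the graphic matroid of $G$ does not obviously provide this, since $\Ubb(G)$ is the class of the affine hypersurface complement rather than a standard matroid invariant such as the characteristic polynomial, and the passage from $\Psi_G$ to $[\Abb^n\smallsetminus \hat X_G]$ is not known to preserve the homogeneity and M-convexity on which that theory rests. As it stands, your text is a reasonable research plan that correctly locates the difficulty, but it is not a proof; the conjecture remains open in the paper and after your proposal.
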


We have verified this conjecture for all melonic graphs with $\le 13$ edges
and for hundreds of individual examples from the families of melonic
graphs considered in this paper.
\begin{example}
As polynomials in $\Sbb$, the Grothendieck classes of all possible melonic graphs
with $7$ edges are
\begin{align*}
&\left( \Sbb+1 \right) ^{3} \left( \Sbb+2\right) ^{4} \\ 
&\left( \Sbb+1 \right) ^{2}\left( \Sbb+2 \right) ^{5} \\ 
&\left( \Sbb+1 \right) \left( \Sbb+2 \right) ^{6}   \\ 
&\left( \Sbb+2 \right) ^{7} \\ 
&\left( \Sbb+1 \right) ^{3} \left( \Sbb+2 \right) ^{3}\left( \Sbb+3 \right)   \\
&\left( \Sbb+1 \right) ^{2} \left( \Sbb+2\right) ^{4} \left( \Sbb+3 \right)  \\ 
&\left( \Sbb+1 \right) ^{3}  \left( \Sbb+2 \right) ^{3} \left( \Sbb+4 \right) \\
&\left( \Sbb+1 \right) ^{4} \left( \Sbb+2 \right) ^{2} \left( \Sbb+5 \right)  \\
&\left( \Sbb+1 \right) ^{2} \left( \Sbb+2 \right) ^{3} \left( {\Sbb}^{2}+4\,\Sbb+2 \right)  \\ 
&\left( \Sbb+1 \right) \left( \Sbb+2\right) ^{4} \left( {\Sbb}^{2}+4\,\Sbb+2 \right)  \\
&\left( \Sbb+1 \right) ^{2} \left( \Sbb+2 \right) ^{3} \left( {\Sbb}^{2}+5\,\Sbb+2 \right)  \\ 
&\left( \Sbb+1 \right) ^{2} \left( \Sbb+2 \right) ^{3}  \left( {\Sbb}^{2}+5\,\Sbb+5 \right)   \\ 
&\left( \Sbb+1 \right) ^{2} \left( \Sbb+2 \right) ^{2} \left( {\Sbb}^{3}+6\,{\Sbb}^{2}
+7\,\Sbb+3 \right)  \\ 
&\left( \Sbb+1 \right) \left( \Sbb+2 \right) ^{3} \left( {\Sbb}^{3}+6\,{\Sbb}^{2}
+7\,\Sbb+3 \right)   \\ 
&\left( \Sbb+1 \right) \left( \Sbb+2 \right) ^{3} \left( {\Sbb}^{3}+6\,{\Sbb}^{2}
+9\,\Sbb+3 \right) \\
&\left( \Sbb+1 \right) \left( \Sbb+2\right) ^{2} \left( {\Sbb}^{4}+8\,{\Sbb}^{3}
+15\,{\Sbb}^{2}+12\,\Sbb+3 \right)  \\ 
&\left( \Sbb+1 \right) \left( \Sbb+2 \right) ^{2}\left( {\Sbb}^{4}+8\,{\Sbb}^{3}
+19\,{\Sbb}^{2}+16\,\Sbb+5 \right)    \\
&\left( \Sbb+1 \right)  \left( \Sbb+2\right)  \left( {\Sbb}^{5}+10\,{\Sbb}^{4}+26\,{\Sbb}^{3}
+31\,{\Sbb}^{2}+17\,\Sbb+4\right) 
\end{align*}
One may verify that all these polynomials are log-concave (in the sense that the
coefficients of their expansions are log-concave sequences). 
The number of distinct Grothendieck classes for melonic graphs with $n$ edges
is
\[
1, 2, 2, 4, 6, 11, 18, 33, 59, 114,220,454,954\dots
\] 
respectively as $n=1,2,3,\dots$
\qede\end{example}

\smallskip

The log-concavity property of the Grothendieck classes implies similar properties
for the image of these classes under any motivic measure, meaning a ring
homomorphism $\mu: K_0(\cV)\to R$. Such measures include the topological
Euler characteristic and the Hodge--Deligne polynomials (for complex varieties)
or the counting of points (for varieties over finite fields). As discussed in \cite{Huh},
the presence of a log-concave structure is usually a sign of the presence of an
underlying richer kind of structure, in the form of Hodge-de Rham relations.
These can be seen as a broad combinatorial generalization of the
setting of the Grothendieck standard conjectures for algebraic cycles.
Such combinatorial Hodge-de Rham relations arise, for example, in the
context of the log-concavity property of characteristic polynomial of
matroids, \cite{AdiHuhKa}. Thus, the observed log-concavity of the Grothendieck
classes of the graph hypersurface complements as a function
of the $\Sbb$ variable suggest the presence of a more interesting underlying
geometric structure in this Hodge-de Rham sense. 

\smallskip

While the Grothendieck classes are positive in the class~$\Sbb$ and display this
intriguing property, we will persist in
using~$\Tbb$ in most of the examples that follow, since the coefficients of the powers
of $\Tbb$ in these classes tend to be smaller.

\section{Explicit computations, I}\label{MEcacI}

The recursion obtained in~\S\ref{MGc} is easily implemented in any symbolic
manipulation package, and this makes it possible to explore the landscape of Grothendieck
classes for natural families of  melonic graphs. We will present a selection of such
formulas in the sections that follow. While we are able to prove these formulas (see~\S\ref{Mp}),
the recursion formulas were key to discovering them, and often the numerical evidence
we gathered was quite sufficient to convince us of their truth. 
It would be worthwhile studying other natural families of melonic graphs using the same method.

\smallskip

In this section we focus on melonic graph with internal vertices of valence~$4$, and we
will use the shorthand for such graphs introduced in~\S\ref{BtmNc}.

\begin{example}
For a simple valence-$4$ example that can be computed without employing the 
full recursion from~\S\ref{MGc}, we can consider the graph
\begin{center}
\includegraphics[scale=.5]{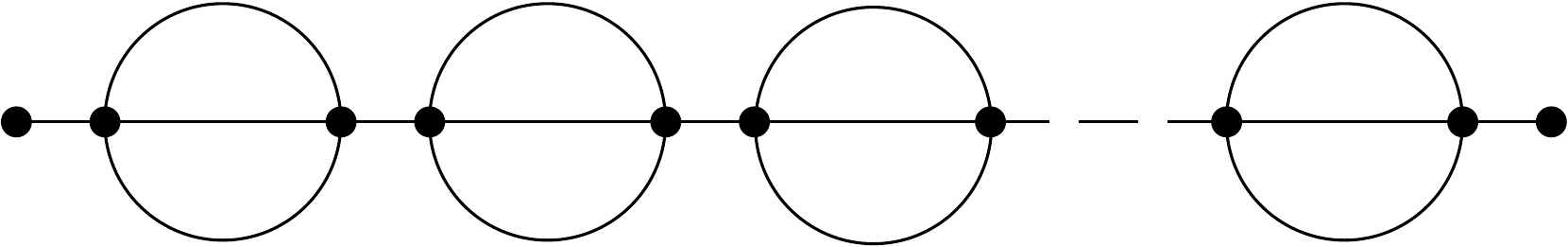}
\end{center}
with $n$ circles.
A corresponding melonic construction is $(0,1^-,2^-,3^-,\dots, (n-1)^-)$. This construction is 
non-reduced; a reduced alternative is simply the $1$-stage construction
\[
((1,3,1,3,1,3,\dots,1),0,1)\quad.
\]

The corresponding Grothendieck class is a product of classes of $3$-bananas 
and $(\Tbb+1)$-factors, accounting for the external and internal single edges.
Explicitly, the class equals
\[
\Bb_3^n\cdot (\Tbb+1)^{n+1} = \Tbb^n (\Tbb+1)^{3n+1}\quad.
\]
for $n$ circles.
\qede\end{example}

\begin{example}\label{ex:gamman}
At the opposite end of the spectrum, and more interestingly, consider the
valence-$4$ melonic graphs $\Gamma_n$ constructed by
$(0,1^+,2^+,3^+,\dots, (n-1)^+)$. These are graphs of the form
\begin{center}
\includegraphics[scale=.4]{nbubbles}
\end{center}
with $n$ circles.

\smallskip
\subsection{Recursion for the $\Gamma_n$ graphs}

The graph $\Gamma_n$ has $4n+1$ edges, so by Corollary~\ref{cor:Tate} its 
Grothendieck class is a polynomial in $\Tbb$ of degree $4n+1$. For $n=1,\dots, 7$ 
the recursion obtained in~\S\ref{MGc} yields the following Grothendieck classes:
\begin{align*}
n=1&:\qquad \Tbb^1(\Tbb+1)^3\cdot (\Tbb+1) \\
n=2&:\qquad \Tbb^2(\Tbb+1)^5\cdot (\Tbb^2+3\Tbb) \\
n=3&:\qquad \Tbb^3(\Tbb+1)^7\cdot (\Tbb^3+5\Tbb^2+4\Tbb-2) \\
n=4&:\qquad \Tbb^4(\Tbb+1)^9\cdot (\Tbb^4+7\Tbb^3+12\Tbb^2-4) \\
n=5&:\qquad \Tbb^5(\Tbb+1)^{11}\cdot (\Tbb^5+9\Tbb^4+24\Tbb^3+14\Tbb^2-12\Tbb-4) \\
n=6&:\qquad \Tbb^6(\Tbb+1)^{13}\cdot (\Tbb^6+11\Tbb^5+40\Tbb^4+48\Tbb^3-8\Tbb^2-28\Tbb) \\
n=7&:\qquad \Tbb^7(\Tbb+1)^{15}\cdot 
(\Tbb^7+13\Tbb^6+60\Tbb^5+110\Tbb^4+40\Tbb^3-72\Tbb^2-32\Tbb+8)
\end{align*}
Identifying the pattern underlying these expressions is an interesting challenge.
\begin{itemized}
\item 
Define polynomials $a_k(r,t)\in \Zbb[r,t]$ for $k\ge 0$ by the power series expansion
\begin{equation}\label{eq:cos1}
e^{rs} \cos((r^2-rt)^{\frac 12}s)=\sum_{k\ge 0} a_k(r,t)\frac{s^k}{k!}\quad; 
\end{equation}
\item
In turn, define polynomials $A_n(t)\in \Zbb[t]$ for $n\ge 0$ by the equality of formal 
power series
\[
\sum_{k\ge 0} a_k(r,t) = \sum_{n\ge 0} A_n(t) r^n\quad.
\]
(Since $t$ only appears in the product $rt$ in~\eqref{eq:cos1}, it is clear that 
$A_n(t)$ is indeed a polynomial, of degree at most $n$. In fact, $\deg A_n=n$.)

\begin{prop}\label{prop:GCgamman}
With $\Gamma_n$ as above, 
$\Ubb(\Gamma_n)=\Tbb^n (\Tbb+1)^{2n+1}\cdot A_n(\Tbb)$
for $n\ge 1$.
\end{prop}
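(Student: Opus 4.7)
The plan is to use the recursion from Section~\ref{MGc} to derive a short linear recursion for $\Ubb(\Gamma_n)$, and then verify that the polynomials $A_n(t)$ defined by the stated generating function satisfy the matching recursion with the same initial values.

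First, I would apply the recursion formula to the canonical melonic construction of $\Gamma_n$, whose terminal tuple is $t_n = ((1,3,1), n-1, 2)$. The two children $T'$ and $T''$ (with terminal tuples $((1,1,1), n-1, 2)$ and $((1,1), n-1, 2)$) are all-ones tuples, so by the `all ones' reduction they contribute $(\Tbb+1)^2\,\Ubb(\Gamma_{n-1})$ and $(\Tbb+1)\,\Ubb(\Gamma_{n-1})$ respectively. The third child $T'''$ produces a new graph $\Gamma_{n-1}'$, obtained from $\Gamma_{n-1}$ by replacing its terminal $3$-banana with a $2$-banana. Setting $U_n := \Ubb(\Gamma_n)$ and $V_n := \Ubb(\Gamma_n')$, and unwinding $f_3,g_3,h_3$ from~\eqref{eq:coefs}, the recursion collapses to
\[
U_n = \Tbb(\Tbb+1)^2\bigl[(\Tbb+1) U_{n-1} + (\Tbb-1) V_{n-1}\bigr].
\]
Applying the same recursion to $\Gamma_n'$, whose terminal tuple $((1,2,1), n-1, 2)$ now calls on the $m=2$ coefficients $f_2=\Tbb-1$, $g_2=\Tbb+1$, $h_2=\Tbb$, gives the companion identity
\[
V_n = \Tbb(\Tbb+1)^2\bigl[U_{n-1} + V_{n-1}\bigr].
\]

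Second, I would eliminate $V$ from this coupled $2\times 2$ system by solving the first identity for $\Tbb(\Tbb-1)(\Tbb+1)^2 V_{n-1}$ and substituting into the expansion of $U_{n+1}$. The result is the three-term recursion
\[
U_{n+1} = \Tbb(\Tbb+1)^2(\Tbb+2)\,U_n - 2\Tbb^2(\Tbb+1)^4\,U_{n-1},
\]
which, after writing $U_n = \Tbb^n(\Tbb+1)^{2n+1} A_n(\Tbb)$ and dividing by $\Tbb^{n+1}(\Tbb+1)^{2n+3}$, becomes $A_{n+1}(t) = (t+2)\,A_n(t) - 2\,A_{n-1}(t)$, with initial values $A_0=1$ and $A_1(t)=t+1$ read off from the direct computations $U_0=\Tbb+1$ and $U_1=\Tbb(\Tbb+1)^4$.

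Finally, I would verify that the polynomials built from the stated generating function satisfy the same recursion. The exponential–trigonometric factor $f(s) = e^{rs}\cos(\sqrt{r^2-rt}\,s)$ satisfies the ODE $f'' - 2rf' + (2r^2 - rt)\,f = 0$, which translates into $a_{k+2} = 2r\,a_{k+1} - (2r^2 - rt)\,a_k$ for its Taylor coefficients. Summing this over $k\ge 0$ and using $a_0=1$, $a_1=r$ yields the closed form
\[
\sum_{n\ge 0} A_n(t)\,r^n \;=\; \frac{1-r}{1 - 2r + 2r^2 - rt},
\]
which is equivalent to the recursion $A_{n+1} = (t+2)A_n - 2A_{n-1}$ with $A_0 = 1$, $A_1 = t+1$. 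This matches the recursion derived for $\Ubb(\Gamma_n)$, completing the proof. The main obstacle is the bookkeeping around the auxiliary family $\{\Gamma_n'\}$: the recursion of Section~\ref{MGc} inevitably introduces it when applied to $\Gamma_n$, and one must verify carefully both that this family closes under the same recursion (so no further auxiliary families intrude) and that the coupled $2\times 2$ system has coefficients clean enough to collapse to the simple recursion satisfied by $A_n$.
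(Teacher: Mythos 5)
Your proposal is essentially the paper's own argument: the coupled system you derive for $U_n=\Ubb(\Gamma_n)$ and the auxiliary family $V_n$ (graphs capped by a $2$-banana) is exactly the system~\eqref{qe:inthep} in the proof of Theorem~\ref{thm:rationG}, the elimination yields the same three-term recursion (Claim~\ref{claim:recurbra}), and your ODE derivation of the rational generating function for the $A_n$ is an equivalent route to the partial-fraction computation in Proposition~\ref{prop:both}. The one point you gloss over is the base case: the coupled system requires $V_{n-1}$ to be defined, so the elimination only produces the recursion $U_{n+1}=\Tbb(\Tbb+1)^2(\Tbb+2)U_n-2\Tbb^2(\Tbb+1)^4U_{n-1}$ for $n\ge 2$, whereas matching against the generating function with $A_0=1$, $A_1=t+1$ implicitly uses the instance $n=1$ as well. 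Since $\Gamma_n$ is built over a single edge (a bridge), that instance must be checked separately by computing $U_2$ directly and verifying $U_2=\Tbb(\Tbb+1)^2(\Tbb+2)U_1-2\Tbb^2(\Tbb+1)^4U_0$; this is precisely the ``bridge'' verification carried out in Theorem~\ref{thm:rationG2}, and it does hold, so your argument closes with that one additional computation.
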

\end{itemized}

Proposition~\ref{prop:GCgamman} may be easily verified for low values of $n$;
our computer implementation takes a few seconds to verify it for $n=1,\dots,100$.
We will prove Proposition~\ref{prop:GCgamman} in~\S\ref{Mp}.

The definition given above for the polynomials $A_n(t)$ is of course just one
choice among many. An alternative (and perhaps simpler) formulation will be 
given in~\S\ref{MEcacII}. The most explicit version of the same result is the following.

\begin{corol}
\begin{align*}
\Ubb(\Gamma_n)&=
\Tbb^n (\Tbb+1)^{2n+1}\cdot \sum_{0\le i\le j} \binom{n+i}{2j}\binom ji (-1)^{j-i} \Tbb^i \\
&=(\Sbb+1)^n (\Sbb+2)^{2n+1}\cdot\sum_{0\le i, j} \binom{n-j}{i}\binom {i+j-1}j 2^{n-i-j} \Sbb^i\quad.
\end{align*}
\end{corol}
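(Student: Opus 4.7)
The proof reduces via Proposition~\ref{prop:GCgamman} to establishing two explicit polynomial identities: one for $A_n(t)$, and a second for $A_n(\Sbb+1)$ viewed as a polynomial in $\Sbb$.

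For the first identity, I would extract coefficients directly from the defining generating function. Taylor-expanding gives
\[
e^{rs}\cos((r^2-rt)^{1/2}s) = \sum_{m,j} (-1)^j \frac{r^m s^m}{m!} \cdot \frac{(r^2-rt)^j s^{2j}}{(2j)!},
\]
and the binomial theorem yields $(r^2-rt)^j = r^j(r-t)^j = \sum_i \binom{j}{i}(-1)^i r^{2j-i} t^i$. Collecting the coefficient of $s^k/k!$ with $k = m+2j$ produces
\[
a_k(r,t) = \sum_{i\le j,\; 2j\le k} \binom{k}{2j}\binom{j}{i}(-1)^{j+i}\, r^{k-i} t^i.
\]
Summing over $k$ and reindexing by $n = k-i$ yields $A_n(t) = \sum_{0\le i\le j} \binom{n+i}{2j}\binom{j}{i}(-1)^{j-i} t^i$, which is the first formula.

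For the second identity, I would first derive a rational generating function. Differentiating $f(s) = e^{rs}\cos((r^2-rt)^{1/2}s)$ twice and eliminating the sine term yields the ODE $f'' - 2r f' + (2r^2 - rt) f = 0$, equivalent to the three-term recurrence $a_{k+2} = 2r a_{k+1} + (rt - 2r^2) a_k$ with $a_0 = 1$, $a_1 = r$. Summing over $k\ge 0$ and using the defining relation $\sum_k a_k(r,t) = \sum_n A_n(t) r^n$ gives
\[
\sum_{n\ge 0} A_n(t)\, r^n = \frac{1-r}{1 - (2+t)r + 2r^2}.
\]
Substituting $t = \Sbb+1$ factors the denominator as $(1-r)(1-2r) - \Sbb r$, and a geometric series expansion in $\Sbb$ gives
\[
\sum_{n\ge 0} A_n(\Sbb+1)\, r^n = \sum_{i\ge 0} \Sbb^i \cdot \frac{r^i}{(1-r)^i (1-2r)^{i+1}}.
\]
Reading off the coefficient of $r^n$ via the standard expansions $(1-r)^{-i} = \sum_a \binom{a+i-1}{i-1} r^a$ and $(1-2r)^{-(i+1)} = \sum_b \binom{b+i}{i} 2^b r^b$, and reindexing the convolution by $j=a$ (using $\binom{a+i-1}{i-1} = \binom{i+j-1}{j}$), produces exactly $\sum_j \binom{n-j}{i}\binom{i+j-1}{j} 2^{n-i-j}$.

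The main technical point is the derivation of the rational generating function: the ODE and the associated recurrence must be set up carefully, as sign errors in the second derivative of $f$ propagate to an incorrect denominator. Boundary conventions (e.g., $\binom{-1}{0}=1$, handling the $i=0$ coefficient where only the $j=0$ term survives) are minor once the main identity is in hand; the remaining steps are standard binomial manipulations.
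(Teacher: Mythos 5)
Your proposal is correct and supplies exactly the ``straightforward details'' the paper leaves to the reader: the first identity by direct expansion of $e^{rs}\cos((r^2-rt)^{1/2}s)$ and reindexing $k=n+i$, the second by substituting $t=\Sbb+1$ into the rational generating function and expanding geometrically in $\Sbb$, and all the binomial bookkeeping checks out (including the $i=0$ and $\binom{-1}{0}=1$ boundary cases). The only remark is that the rational form $\frac{1-r}{1-(2+t)r+2r^2}$ you obtain from the ODE is precisely Proposition~\ref{prop:both}, which the paper proves separately in \S\ref{MEcacII}, so you could cite it instead of rederiving it.
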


The straightforward details are left to the reader.
\qede\end{example}

\begin{example}\label{ex:gammavn}
A similar pattern holds for {\em vacuum\/} graphs analogous to those
considered in Example~\ref{ex:gamman}. Let $\Gamma_n'$ denote the graph
\begin{center}
\includegraphics[scale=.4]{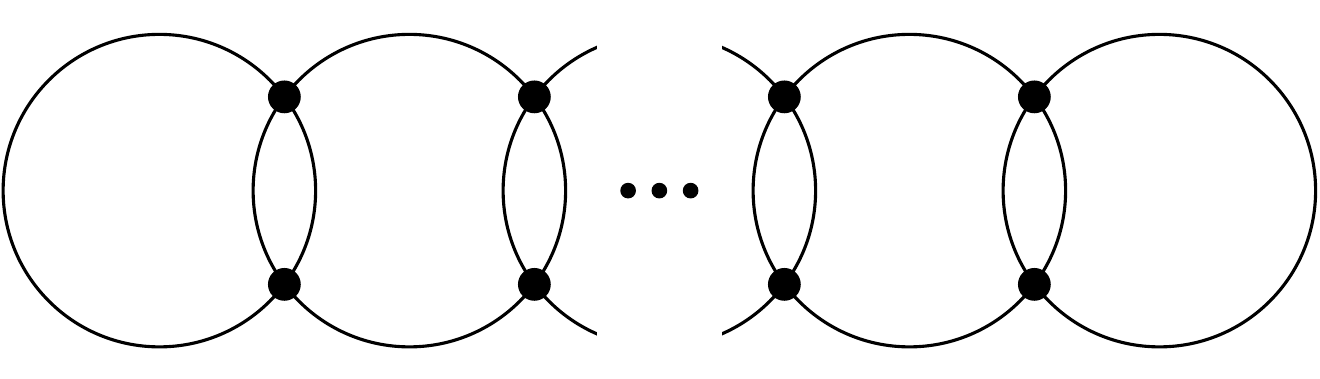}
\end{center}
with $n$ circles. As observed at the end of~\S\ref{BtmNc}, these graphs are
also melonic: their construction is $((4),0,1)$ for two circles and 
\[
((4),0,1),((1,3,1),1,1),((1,3,1),2,2),\dots, ((1,3,1),n-2,2)
\]
for a $n\ge 3$ circles.
For $n\ge 2$, Corollary~\ref{cor:Tate} implies that
$\Ubb(\Gamma'_n)$ is a polynomial of degree $4n-4$ in $\Tbb$.
Applying the recursion obtained in~\S\ref{MGc} we obtain the
following expressions for $\Ubb(\Gamma'_n)$ $n=2,\dots,7$:\begin{align*}
n=2&:\qquad \Tbb^1(\Tbb+1)^1\cdot (\Tbb^2+2\Tbb-1) \\
n=3&:\qquad \Tbb^2(\Tbb+1)^3\cdot (\Tbb^3+4\Tbb^2+\Tbb-2) \\
n=4&:\qquad \Tbb^3(\Tbb+1)^5\cdot (\Tbb^4+6\Tbb^3+7\Tbb^2-4\Tbb-2) \\
n=5&:\qquad \Tbb^4(\Tbb+1)^7\cdot (\Tbb^5+8\Tbb^4+17\Tbb^3+2\Tbb^2-12\Tbb) \\
n=6&:\qquad \Tbb^5(\Tbb+1)^9\cdot (\Tbb^6+10\Tbb^5+31\Tbb^4+24\Tbb^3-22\Tbb^2-16\Tbb+4) \\
n=7&:\qquad \Tbb^6(\Tbb+1)^{11}\cdot 
(\Tbb^7+12\Tbb^6+49\Tbb^5+70\Tbb^4-8\Tbb^3-64\Tbb^2-4\Tbb+8)
\end{align*}
\begin{itemized}
\item 
Define rational functions $a'_k(r,t)\in \Zbb[t](r)$ for $k\ge 0$ by the power series expansion
\begin{equation}\label{eq:cos2}
\cos\left(\frac{(r^2-rt)^{\frac 12}}{1-r}s\right)=\sum_{k\ge 0} a'_k(r,t)\frac{s^k}{k!}\quad; 
\end{equation}
that is, let $a'_k(r,t)=0$ for $k$ odd and $a'_{2\ell}(r,t)=\frac 1{(2\ell)!} \frac{r^\ell(t-r)^\ell}
{(1-r)^{2\ell}}$.
\item
Define polynomials $A'_n(t)\in \Zbb[t]$ for $n\ge 0$ by the equality of formal 
power series
\[
\sum_{k\ge 0} a'_k(r,t) = \sum_{n\ge 0} A'_n(t) r^n\quad.
\]
(Again, $A'_n(t)$ is clearly a polynomial, and $\deg A'_n=n$.)
\begin{prop}\label{prop:GCgammanv}
With $\Gamma'_n$ as above, 
$\Ubb(\Gamma'_n)=\Tbb^{n-1} (\Tbb+1)^{2n-3}\cdot A'_n(\Tbb)$
for $n\ge 2$.
\end{prop}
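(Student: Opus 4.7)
The plan is to establish a two-term linear recurrence for $\Ubb(\Gamma'_n)$ via two consecutive applications of the Section~\ref{MGc} recursion, and to match it with the recurrence implicit in the generating function of the statement. First I would evaluate the generating function in closed form: since the Taylor series of $\cos$ gives $a'_{2m}(r,t)=(-1)^m r^m(r-t)^m/(1-r)^{2m}$ and $a'_{2m+1}=0$, geometric summation yields
\begin{equation*}
\sum_{n\ge 0} A'_n(t)\,r^n \,=\, \sum_{m\ge 0}\left(\frac{-r(r-t)}{(1-r)^2}\right)^m
\,=\, \frac{(1-r)^2}{1-(2+t)r+2r^2}\quad.
\end{equation*}
Reading off the denominator gives the scalar recurrence $A'_n(t)=(t+2)A'_{n-1}(t)-2A'_{n-2}(t)$ for $n\ge 3$, with $A'_0=1$, $A'_1=t$, $A'_2=t^2+2t-1$. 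Via $\Ubb(\Gamma'_n) = \Tbb^{n-1}(\Tbb+1)^{2n-3}A'_n(\Tbb)$, the proposition reduces to verifying the graph-theoretic recurrence
\begin{equation*}
\Ubb(\Gamma'_n) \,=\, \Tbb(\Tbb+1)^2(\Tbb+2)\,\Ubb(\Gamma'_{n-1})
\,-\, 2\Tbb^2(\Tbb+1)^4\,\Ubb(\Gamma'_{n-2}),\qquad n\ge 4,
\end{equation*}
together with the base cases $n=2,3$, which can be read off from Example~\ref{ex:gammavn}.

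To prove the recurrence, I would apply the Section~\ref{MGc} recursion to the last $(1,3,1)$-stage of the construction of $\Gamma'_n$, with $a_m=3$. The constructions $T'$ and $T''$ produced are $\Gamma'_{n-1}$ with one edge of its last-stage $3$-banana subdivided twice and once respectively, so by the edge-splitting rule $\Ubb(T')=(\Tbb+1)^2\Ubb(\Gamma'_{n-1})$ and $\Ubb(T'')=(\Tbb+1)\Ubb(\Gamma'_{n-1})$; meanwhile $T'''$ is the construction obtained from $\Gamma'_{n-1}$ by replacing that same last-stage $3$-banana with a $2$-banana. Setting $V_n:=\Ubb(T''')$ and combining terms using $f_3, g_3, h_3$ from \eqref{eq:coefs} gives
\begin{equation*}
\Ubb(\Gamma'_n) \,=\, \Tbb(\Tbb+1)^3\,\Ubb(\Gamma'_{n-1})
\,+\, \Tbb(\Tbb-1)(\Tbb+1)^2\,V_n\quad.
\end{equation*}
A second application of the recursion, now to the last $(1,2,1)$-stage of $T'''$ with $a_m=2$, produces the coupled identity
\begin{equation*}
V_n \,=\, \Tbb(\Tbb+1)^2\,\Ubb(\Gamma'_{n-2}) \,+\, \Tbb(\Tbb+1)^2\,V_{n-1}, \qquad n\ge 4,
\end{equation*}
the key observation being that the ``$T'''$-step'' applied to $T'''$ itself reproduces precisely the defining construction of $V_{n-1}$ at one lower depth. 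Eliminating $V_n$ and $V_{n-1}$ between the two relations, using the identity $(\Tbb-1)-(\Tbb+1)=-2$, yields the desired two-term recurrence after a routine collapse of polynomial factors.

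The main obstacle will be the identification of $T'''$ with the auxiliary family $V_n$ and the verification that the Section~\ref{MGc} recursion applied to the last stage of $T'''$ cleanly loops back to $V_{n-1}$: this requires careful bookkeeping through non-reduced intermediate constructions, since $T'''$ contains an internal $2$-banana that is not removable by the reduction procedure of Lemma~\ref{lem:redrep}. Once the coupled system is in place, the remaining elimination and the base case verifications for $n=2,3$ amount to a straightforward polynomial computation in $\Zbb[\Tbb]$.
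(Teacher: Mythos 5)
Your proof is correct and follows essentially the same route as the paper's: the coupled system relating $\Ubb(\Gamma'_n)$ to the auxiliary graphs with the last $3$-banana demoted to a $2$-banana is exactly the system \eqref{qe:inthep} used to prove Theorem~\ref{thm:rationG} (and Claim~\ref{claim:recurbra}), and your geometric-series evaluation of the cosine generating function reproduces Proposition~\ref{prop:both}. The only difference is cosmetic: the paper pins down the numerator of the rational generating function via Theorem~\ref{thm:rationG2}, applied to a $2$-banana and its edge-deletion, whereas you fix the initial conditions by checking $n=2,3$ directly.
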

\end{itemized}

Again, Proposition~\ref{prop:GCgammanv} may be easily verified by computer,
using the recursion formula obtained in~\S\ref{MGc}, for (hundreds of) low 
values of $n$. Proposition~\ref{prop:GCgammanv} will also be proved in~\S\ref{Mp}.

\begin{corol}
\[
\Ubb(\Gamma'_n)=
\Tbb^{n-1}\Tbb^{2n-3}\sum_{0\le i\le j} \binom{n+i-1}{2j-1} \binom ji (-1)^{j-i}\Tbb^i\quad.
\]
\end{corol}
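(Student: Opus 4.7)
The plan is to deduce this corollary directly from Proposition~\ref{prop:GCgammanv}, which identifies $\Ubb(\Gamma'_n)=\Tbb^{n-1}(\Tbb+1)^{2n-3}A'_n(\Tbb)$. Consequently, it suffices to establish the polynomial identity
\[
A'_n(t)=\sum_{0\le i\le j}\binom{n+i-1}{2j-1}\binom{j}{i}(-1)^{j-i}t^i,
\]
which is a purely combinatorial consequence of the generating function that defines $A'_n(t)$.

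The key observation is that only even indices contribute to the $\cos$-expansion, so after the normalization convention is taken into account the definition of $A'_n(t)$ in the excerpt amounts to
\[
\sum_{n\ge 0}A'_n(t)\,r^n=\sum_{\ell\ge 0}\frac{r^\ell(t-r)^\ell}{(1-r)^{2\ell}}.
\]
I would then extract the coefficient of $r^n$ on the right by expanding each factor with the binomial theorem,
\[
(t-r)^\ell=\sum_{i=0}^\ell\binom{\ell}{i}(-1)^i t^{\ell-i}r^i,\qquad
(1-r)^{-2\ell}=\sum_{m\ge 0}\binom{2\ell+m-1}{2\ell-1}r^m,
\]
multiplying them out, and collecting the coefficient of $r^n$ by imposing the constraint $\ell+i+m=n$. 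This produces
\[
A'_n(t)=\sum_{\ell,i}\binom{\ell}{i}\binom{n+\ell-i-1}{2\ell-1}(-1)^i\, t^{\ell-i}.
\]

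The final step is an index substitution: setting $j:=\ell$ and introducing $i':=\ell-i$ (so the original $i$ becomes $j-i'$ and the exponent of $t$ becomes $i'$), the sum takes the claimed form once one uses $\binom{j}{j-i'}=\binom{j}{i'}$ and renames $i'$ back to $i$. The only real obstacle is careful bookkeeping: one has to verify that the sign $(-1)^i$ correctly becomes $(-1)^{j-i}$ under the substitution, and that the summation range $0\le i\le j$ is automatically enforced by the vanishing of $\binom{n+i-1}{2j-1}$ whenever $2j-1>n+i-1$ and of $\binom{j}{i}$ whenever $i>j$. Once these clerical details are in place, the identity holds as an equality of formal power series in~$r$, and the corollary follows.
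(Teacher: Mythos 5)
Your derivation is correct and is exactly the coefficient extraction that the paper leaves to the reader: expand $\sum_{\ell\ge 0} r^\ell(t-r)^\ell(1-r)^{-2\ell}$ (the correct normalization of the $\cos$-expansion, as you note), collect the coefficient of $r^n$, and reindex via $j=\ell$, $i'=\ell-i$; the resulting formula for $A'_n(t)$ checks against the tabulated values (e.g.\ $A'_2=t^2+2t-1$, $A'_3=t^3+4t^2+t-2$). The only caveat is that the displayed corollary contains a typo---$\Tbb^{n-1}\Tbb^{2n-3}$ should read $\Tbb^{n-1}(\Tbb+1)^{2n-3}$, consistent with Proposition~\ref{prop:GCgammanv}---and your argument proves this intended statement.
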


\smallskip
\subsection{Relations of vacuum and non-vacuum graphs}

A particularly careful reader may notice the following relation from the data shown
above:
\begin{equation}\label{eq:relvnv}
A'_n(t) = A_n(t)-A_{n-1}(t)\quad.
\end{equation}
This relation is not a coincidence; it follows from a general formula relating
{Gro\-then\-dieck} classes of melonic vacuum graphs to classes of related non-vacuum
graphs. We will prove this formula in~\S\ref{Mav}.

An even more careful reader may guess the divisibility relation
\begin{equation}\label{eq:divi}
\Ubb(\Gamma_n) \, |\, \Ubb(\Gamma'_{2n+1})\quad:
\end{equation}
for example, 
\[
A'_7(t) = (t^2+5t+2)(t^2+2t-2)\cdot A_3(t)\quad.
\]
The relation~\eqref{eq:divi} will also be obtained as a corollary of a more general
result on melonic vacuum graphs, Proposition~\ref{prop:stars} in~\S\ref{Mav}
(see Remark~\ref{rem:diviex}).
\qede\end{example}

More examples of computations of Grothendieck classes for melonic vacuum graphs
will be given in~\S\ref{Mav}.

The proofs we will discuss in~\S\ref{Mp} will clarify the presence of the factors $\Tbb^i
(\Tbb+1)^j$ in the Grothendieck classes for the valence-$4$ graphs considered in this
section. The example $(0,1^+,1^+,1^+)$,
\begin{center}
\includegraphics[scale=.5]{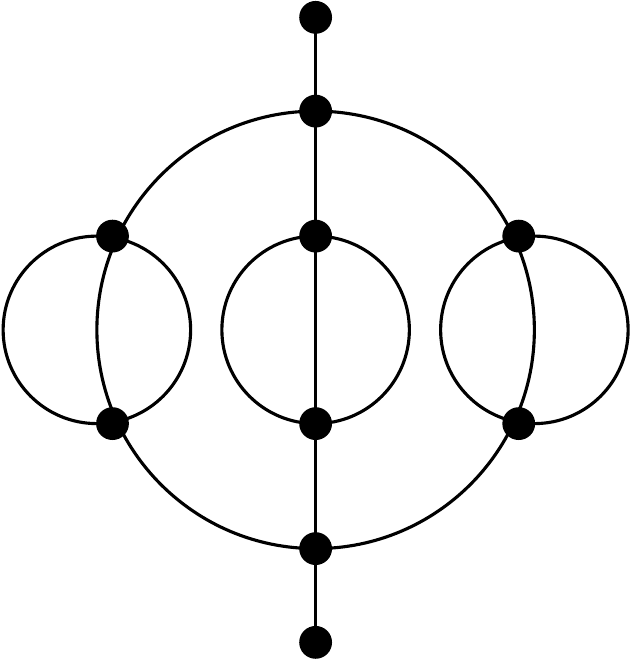}
\end{center}
with Grothendieck class
\[
\Tbb^3 (\Tbb+1)^{10} (\Tbb+3) (\Tbb^3+3\Tbb^2-3\Tbb+1)\quad,
\]
shows that $\Tbb^n (\Tbb+1)^{2n+1}$ is {\em not\/} a common factor of the Grothendieck
classes of all $n$-stage valence-$4$ melonic constructions.

\section{Explicit computations, II}\label{MEcacII}

\subsection{Rational generating functions for $\Gamma_n$ graphs}
While~\S\ref{MEcacI} and~\S\ref{Mav} focus on valence-$4$ graphs,
the same types of computation can be carried out for melonic graphs of any
fixed valence for internal vertices. In order to obtain simpler statements, 
it is helpful to express the results stated in Examples~\ref{ex:gamman} 
and~\ref{ex:gammavn} in terms of rational generating functions.

\begin{prop}\label{prop:both}
With notation as in Examples~\ref{ex:gamman} and~\ref{ex:gammavn},
and setting $A_0(t)=A'_0(t)=1$,
\begin{align*}
\sum_{n\ge 0} A_n(t) r^n &= \frac{1-r}{1-(2+t)r+2r^2}\quad; \\
\sum_{n\ge 0} A'_n(t) r^n &= \frac{(1-r)^2}{1-(2+t)r+2r^2}\quad.
\end{align*}
\end{prop}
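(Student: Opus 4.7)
The plan is to evaluate the two sums $\sum_{k\ge 0}a_k(r,t)$ and $\sum_{k\ge 0}a'_k(r,t)$ directly as closed-form rational functions of $r$ (with $t$ a parameter), by expanding the defining generating functions and resumming.

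First I would multiply out the Taylor series in $s$ for $e^{rs}$ and $\cos((r^2-rt)^{1/2}s)$, and read off the coefficient of $s^k/k!$ to get $a_k(r,t)=\sum_{i+2j=k}\frac{k!}{i!(2j)!}r^i(-1)^j(r^2-rt)^j$. Summing over $k\ge 0$ simply removes the constraint $i+2j=k$ and leaves the double sum
\[
\sum_{k\ge 0}a_k(r,t)=\sum_{i,j\ge 0}\binom{i+2j}{i}r^i(-1)^j(r^2-rt)^j.
\]
This is a well-defined formal power series in $r$ because each summand has $r$-valuation at least $i+j$, so for any fixed $n$ only finitely many pairs $(i,j)$ contribute to the coefficient of $r^n$.

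Next I would sum over $i$ first using $\sum_{i\ge 0}\binom{i+2j}{i}r^i=(1-r)^{-(2j+1)}$, and then sum the resulting geometric series in $j$, a step that is valid formally because $(r^2-rt)/(1-r)^2$ has no constant term in $r$. This yields
\[
\sum_{k\ge 0}a_k(r,t)=\frac{1}{1-r}\cdot\frac{1}{1+(r^2-rt)/(1-r)^2}=\frac{1-r}{(1-r)^2+r^2-rt}=\frac{1-r}{1-(2+t)r+2r^2},
\]
which is the claimed generating function for $A_n(t)$ (and reproduces $A_0(t)=1$). The vacuum case is even more immediate: from $\cos(\gamma s)=\sum_j(-1)^j\gamma^{2j}s^{2j}/(2j)!$ with $\gamma^2=(r^2-rt)/(1-r)^2$, one reads off $a'_{2j}=(-\gamma^2)^j$ and $a'_{2j+1}=0$, so $\sum_{k\ge 0}a'_k(r,t)=\sum_{j\ge 0}(-\gamma^2)^j=1/(1+\gamma^2)=(1-r)^2/(1-(2+t)r+2r^2)$.

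The main obstacle, such as it is, is the bookkeeping needed to certify that these formal-power-series manipulations (swapping the order of the double sum, applying the geometric series formula) are legitimate. The controlling observation is that the $r$-valuation of the $k$-th summand grows at least like $\lceil k/2\rceil$, so all partial sums stabilize modulo any fixed power of $r$; this is exactly what makes the rearrangements valid in the $r$-adic topology on $\Zbb[t][[r]]$. No deeper difficulty is expected—the proposition is essentially a compact repackaging of the combinatorial definitions of $A_n(t)$ and $A'_n(t)$ from the previous section.
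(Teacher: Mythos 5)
Your proof is correct, and it runs in the opposite direction from the paper's. You start from the defining expansion of $e^{rs}\cos((r^2-rt)^{1/2}s)$, extract the explicit formula $a_k(r,t)=\sum_{i+2j=k}\binom{i+2j}{i}r^i(-1)^j(r^2-rt)^j$, and resum using the negative binomial series $\sum_i\binom{i+2j}{i}r^i=(1-r)^{-(2j+1)}$ followed by a geometric series in $j$; the paper instead starts from the rational function, factors the denominator as $(1-r-i\tau)(1-r+i\tau)$ with $\tau=(r^2-rt)^{1/2}$, performs a partial-fraction decomposition, and recognizes $\frac12\bigl(e^{(r+i\tau)s}+e^{(r-i\tau)s}\bigr)=e^{rs}\cos(\tau s)$ to recover the definition of the $a_k$. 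In effect the paper's argument rests on the closed form $a_k=\frac12\bigl((r+i\tau)^k+(r-i\tau)^k\bigr)$ and Euler's formula, while yours avoids the formal square root and complex exponentials entirely and works with explicit binomial coefficients; your route is more elementary and makes the $r$-adic convergence of each rearrangement completely transparent (each summand has $r$-valuation at least $i+j$, so only finitely many terms touch any fixed $r^n$), whereas the paper's is shorter and makes the appearance of the cosine less of a miracle. Your treatment of the vacuum case, reading off $a'_{2j}=(-\gamma^2)^j$ with $\gamma^2=(r^2-rt)/(1-r)^2$ and summing the geometric series, is likewise correct and if anything cleaner than the paper's reduction of $\frac{(1-r)^2}{1-(2+t)r+2r^2}$ to $\frac12\bigl(\frac1{1-i\gamma}+\frac1{1+i\gamma}\bigr)$. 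Both arguments establish exactly the same identity of formal power series, so nothing is lost either way.
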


\begin{proof}
We verify that the polynomials $A_n(t)$, $A'_n(t)$ defined by these expansions
agree with those given in Examples~\ref{ex:gamman} and~\ref{ex:gammavn}.

Concerning $A_n(t)$, let $\tau=(r^2-rt)^{\frac 12}$; then
\[
\frac{1-r}{1-(2+t)r+2r^2}=\frac{1-r}{(1-r-i\tau)(1-r+i\tau)}
=\frac 12\left(\frac1{1-r-i\tau}+\frac1{1-r+i\tau}\right)\quad.
\]
The terms in the power series expansion of this expression are combinations of
powers of $(r-i\tau)$ and $(r+i\tau)$, so they may be obtained as the coefficients
of $\frac{s^k}{k!}$ in
\[
\frac 12\left( e^{(r+i\tau)s}+e^{(r-i\tau)s}\right) = e^{rs}\cdot \frac{e^{i\tau s}+e^{-i\tau s}}2 
= e^{rs} \cos(\tau s)\quad.
\]
This recovers the description of $A_n(t)$ given in Proposition~\ref{prop:GCgamman}.

The argument for $A'_n(t)$ is of course analogous. Again setting $\tau=(r^2-rt)^{\frac 12}$,
we have
\begin{align*}
\frac{(1-r)^2}{1-(2+t)r+2r^2} &= \frac 12\left(\frac{1-r}{1-r-i\tau}+\frac{1-r}{1-r+i\tau}\right) \\
&=\frac 12\left(\frac 1{1-i\frac{\tau}{1-r}}+\frac 1{1+i\frac{\tau}{1-r}}\right)
\end{align*}
and the terms in the power series expansion of this expression are the coefficients
of~$\frac{s^k}{k!}$ in
\[
\frac 12\left(e^{i\frac{\tau}{1-r}} + e^{-i\frac{\tau}{1-r}}\right)
=\cos\left( \frac{\tau}{1-r}\right)\quad,
\]
recovering the description of $A'_n(t)$ in Proposition~\ref{prop:GCgammanv}.
\end{proof}

\subsection{Graphs $\Gamma^v_n$ with arbitrary valence}
We will discuss some families of valence-$4$ vacuum graphs in~\S\ref{Mav}. 
The non-vacuum graphs to which the first formula applies have a natural generalization
for arbitrary valence: we can let $\Gamma^v_n$ be the graphs with melonic construction
\[
\big(((1,v-1,1),0,1),((1,v-1,1),1,2),((1,v-1,1),2,2),\dots, ((1,v-1,1),n-1,2)\big)
\]
for $v\ge 3$. For example, the graphs $\Gamma^3_n$ have the form
\begin{center}
\includegraphics[scale=.4]{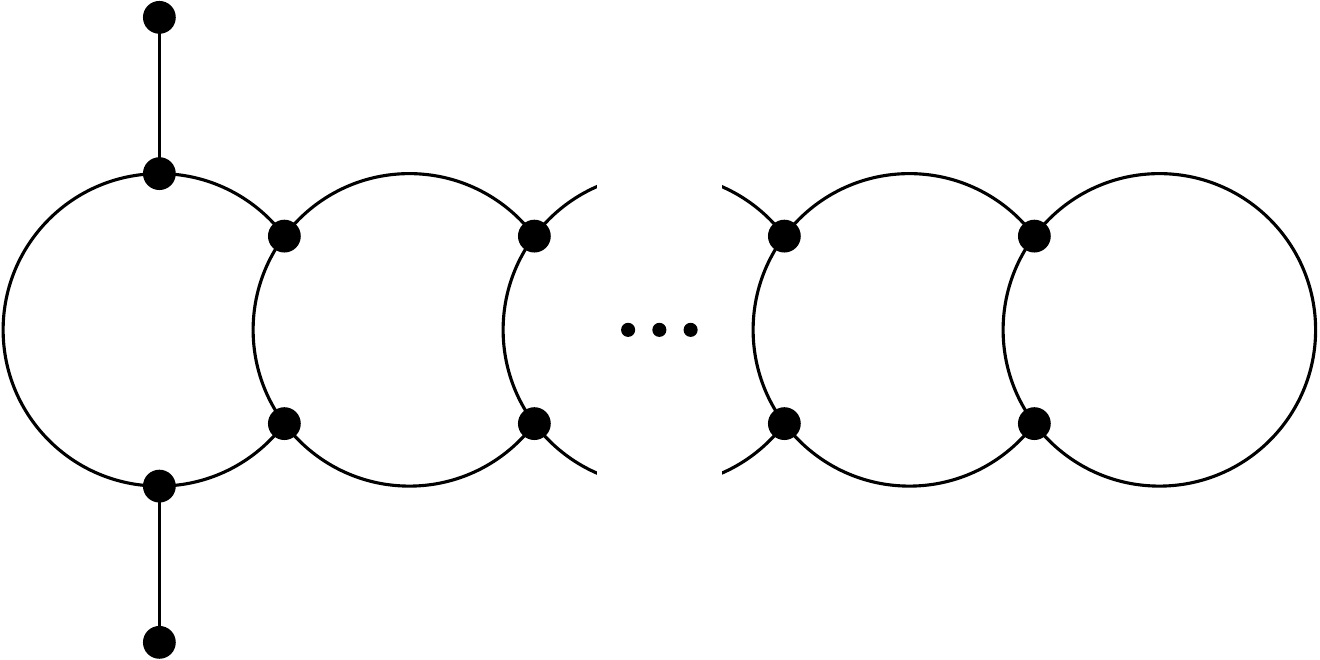}
\end{center}
while the graphs $\Gamma^5_n$ look like
\begin{center}
\includegraphics[scale=.4]{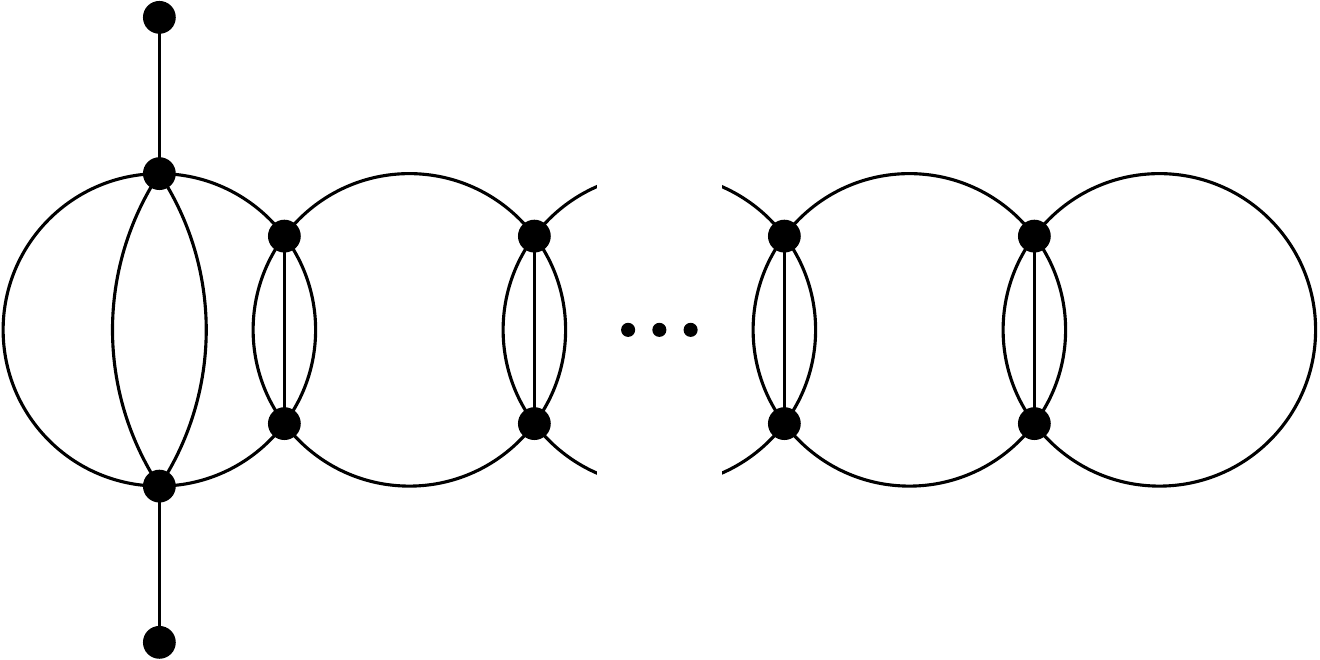}
\end{center}
The first several classes $\Ubb(\Gamma^3_n)$ are
\begin{align*}
n=1&:\qquad (\Tbb+1)^3\cdot \Tbb \\
n=2&:\qquad (\Tbb+1)^5\cdot (\Tbb^2+\Tbb) \\
n=3&:\qquad (\Tbb+1)^7\cdot (\Tbb^3+2\Tbb^2) \\
n=4&:\qquad (\Tbb+1)^9\cdot (\Tbb^4+3\Tbb^3+\Tbb^2) \\
n=5&:\qquad (\Tbb+1)^{11}\cdot (\Tbb^5+4\Tbb^4+3\Tbb^3) \\
n=6&:\qquad (\Tbb+1)^{13}\cdot (\Tbb^6+5\Tbb^5+6\Tbb^4+\Tbb^3) \\
n=7&:\qquad (\Tbb+1)^{15}\cdot (\Tbb^7+6\Tbb^6+10\Tbb^5+4\Tbb^4)
\end{align*}
It is natural to guess that for $n\ge 1$
\[
\Ubb(\Gamma^3_n)=(\Tbb+1)^{2n+1}\cdot C_n(\Tbb)
\]
with
\begin{equation}\label{eq:val3}
C_n(\Tbb)=\sum_{i=0}^n \binom i{n-i} \Tbb^i\quad.
\end{equation}
This may be proven by induction on the number of circles: the $m=2$ case of
formula~\eqref{eq:muledges} yields the recursion
\[
C_{n+1}=\Tbb\cdot (C_n+C_{n-1})\quad,
\]
which determines all $C_n$ from $C_1=\Tbb$, $C_2=\Tbb(\Tbb+1)$, 
confirming~\eqref{eq:val3}.
One can package this result as a generating function and draw the following
conclusion:

\begin{prop}
For $n\ge 1$,
\[
\Ubb(\Gamma^3_n)=(\Tbb+1)^{2n+1}\cdot\text{coefficient of $r^n$ in the
expansion of } \frac{1}{1-\Tbb r-\Tbb r^2}\quad.
\]
\end{prop}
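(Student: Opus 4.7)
The plan is to repackage the recurrence $C_{n+1}(\Tbb) = \Tbb\cdot(C_n(\Tbb) + C_{n-1}(\Tbb))$, derived in the discussion preceding the proposition, as a rational generating function. All geometric input has already been extracted by applying the $m=2$ case of \eqref{eq:muledges} to an edge of the rightmost $2$-banana of $\Gamma^3_n$, so the remaining work is a purely formal manipulation.

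First I would extend the sequence by setting $C_0(\Tbb) := 1$. This choice is essentially forced: with it, the recurrence at $n=1$ produces $C_2 = \Tbb(C_1 + C_0) = \Tbb^2 + \Tbb$, which matches the given base case $C_2 = \Tbb(\Tbb+1)$. With this convention, $C_0 = 1$ and $C_1 = \Tbb$ serve as initial data, and the two-term recurrence $C_{n+1} = \Tbb(C_n + C_{n-1})$ then holds for every $n \geq 1$.

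Next, I would form the formal generating function
\[
F(r, \Tbb) := \sum_{n \geq 0} C_n(\Tbb)\, r^n
\]
and compute
\[
(1 - \Tbb r - \Tbb r^2)\, F(r, \Tbb) = C_0 + (C_1 - \Tbb C_0)\, r + \sum_{n \geq 2}\bigl(C_n - \Tbb C_{n-1} - \Tbb C_{n-2}\bigr) r^n = 1,
\]
using the recurrence to annihilate all terms of degree $\ge 2$, and using $C_0 = 1$, $C_1 = \Tbb$ for the first two terms. This yields $F(r, \Tbb) = 1/(1 - \Tbb r - \Tbb r^2)$; extracting the coefficient of $r^n$ for $n \geq 1$ recovers $C_n(\Tbb)$, and multiplying by $(\Tbb+1)^{2n+1}$ gives the stated expression for $\Ubb(\Gamma^3_n)$.

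There is no substantive obstacle: the recurrence is the only nontrivial input, and the generating-function step is a standard textbook manipulation. The one point requiring mild care is the auxiliary choice $C_0 := 1$ ensuring that the coefficients of $r^0$ and $r^1$ in the product $(1 - \Tbb r - \Tbb r^2) F(r, \Tbb)$ collapse to $1$ and $0$ respectively.
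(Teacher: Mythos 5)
Your proposal is correct and follows essentially the same route as the paper: the paper derives the recursion $C_{n+1}=\Tbb(C_n+C_{n-1})$ from the $m=2$ case of the multiple-edge formula and then simply remarks that one can ``package this result as a generating function,'' which is exactly the standard manipulation you carry out (your choice $C_0:=1$ is the natural seed consistent with $C_1=\Tbb$, $C_2=\Tbb(\Tbb+1)$ and with the closed form $C_n=\sum_i\binom{i}{n-i}\Tbb^i$ at $n=0$). Nothing further is needed.
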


A similar, but understandably more complex expression holds for arbitrary valence~$v$. 

\begin{prop}\label{prop:allv}
Let $v\ge 4$.
\begin{itemize}
\item
The class $\Ubb(\Gamma^v_n)$ is a multiple of $\Tbb^n(\Tbb+1)^{2n+1}$:
\[
\Ubb(\Gamma^v_n)=\Tbb^n(\Tbb+1)^{2n+1}\cdot A^v_n(\Tbb)
\]
for a polynomial $A^v_n(t)$ of degree $(v-3)n$.
\item
The polynomial $A^v_n(t)$ is the coefficient of $r^n$ in the series expansion of the
rational function $\alpha_n(r,t)=N(r,t)/D(r,t)$, where
\[
N(r,t)=\frac{1+t+((-1)^{v-3}-t^{v-3})\,r}{1+t}=1-\left(\sum_{i=0}^{v-4} (-1)^{v-i} t^i\right)r\qquad\text{and}
\]
\begin{multline*}
D(r,t)=1+\left(-vt^{v-3}-\sum_{i=0}^{v-3} (-1)^{v-i} (i+2)t^i\right)r \\
+\left((-1)^v T^{v-4}+\sum_{i=0}^{v-4} (-1)^{v-i}(v-3-i) t^{v-4+i}\right)r^2\quad.
\end{multline*}
\end{itemize}
\end{prop}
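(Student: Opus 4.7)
The plan is to apply the main recursion of Section~\ref{MGc} to $\Ubb(\Gamma^v_n)$, track the cascade onto an auxiliary family to obtain a closed coupled linear system, eliminate to get a second-order scalar recurrence, and convert this into the claimed rational generating function.

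For $n \ge 2$, the last stage of the construction of $\Gamma^v_n$ is $t_n = ((1,v-1,1), n-1, 2)$, so I apply the recursion following~\eqref{eq:recurG} with $m = 2$, $a_m = v-1 \ge 3$, and prefactor $\Bb_1^2 = (\Tbb+1)^2$. The constructions $T'$ and $T''$ correspond to subdividing (twice and once, respectively) one edge of the innermost banana of $\Gamma^v_{n-1}$, giving $\Ubb(T') = (\Tbb+1)^2 \Ubb(\Gamma^v_{n-1})$ and $\Ubb(T'') = (\Tbb+1)\Ubb(\Gamma^v_{n-1})$. The construction $T'''$ omits $t_n$ and decreases the middle entry of $t_{n-1}$ from $v-1$ to $v-2$. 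I would introduce the auxiliary family $M_n$ with the same construction as $\Gamma^v_n$ except that its last stage is $((1,v-2,1), n-1, 2)$; then $T''' = M_{n-1}$. The critical observation is that the same $T'''$ arises when the recursion is applied to $\Ubb(M_n)$, since the modification is to $t_{n-1}$ and is independent of the banana size in the current last stage. This yields the coupled system, valid for $n \ge 2$:
\begin{align*}
\Ubb(\Gamma^v_n) &= \alpha_{v-1}\,\Ubb(\Gamma^v_{n-1}) + (\Tbb+1)^2\, h_{v-1}\,\Ubb(M_{n-1}), \\
\Ubb(M_n) &= \alpha_{v-2}\,\Ubb(\Gamma^v_{n-1}) + (\Tbb+1)^2\, h_{v-2}\,\Ubb(M_{n-1}),
\end{align*}
where $\alpha_j := f_j(\Tbb+1)^2 + g_j(\Tbb+1)$ and $f_j, g_j, h_j$ are as in~\eqref{eq:coefs}.

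Eliminating $\Ubb(M_n)$ produces a second-order linear recurrence for $\Ubb(\Gamma^v_n)$ with polynomial coefficients in $\Tbb$, valid for $n \ge 3$; the $n = 2$ case is checked directly from $\Ubb(\Gamma^v_0) = \Tbb+1$ and $\Ubb(\Gamma^v_1) = (\Tbb+1)^2\Bb_{v-1}$. To extract $A^v_n(t)$, I would verify inductively that $\Tbb^n(\Tbb+1)^{2n+1} \mid \Ubb(\Gamma^v_n)$. The base cases follow from $\Tbb(\Tbb+1) \mid \Bb_m$ for $m \ge 2$ (itself a consequence of $\Bb_m(0) = \Bb_m(-1) = 0$), while the inductive step reduces to showing $\Tbb(\Tbb+1)^2 \mid \alpha_{v-1} + (\Tbb+1)^2 h_{v-2}$ and $\Tbb^2(\Tbb+1)^4 \mid (\Tbb+1)^2(\alpha_{v-2} h_{v-1} - \alpha_{v-1} h_{v-2})$. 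Both follow from direct inspection of the values of $\alpha_j, h_j$ at $\Tbb = 0$ and $\Tbb = -1$, using the polynomial identity $(\Tbb^m - (-1)^m)/(\Tbb+1) = \sum_{i=0}^{m-1} (-1)^i \Tbb^{m-1-i}$. Dividing through yields a recurrence $A^v_n = p(\Tbb) A^v_{n-1} + q(\Tbb) A^v_{n-2}$ with explicit polynomials $p, q$, from which the degree claim $\deg A^v_n = (v-3)n$ follows by comparing leading coefficients.

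The final step packages this recurrence as the generating function $\sum_{n \ge 0} A^v_n(t) r^n = N(r,t)/D(r,t)$, with $D(r,t) = 1 - p(t)r - q(t)r^2$ and $N(r,t) = 1 + (A^v_1(t) - p(t))r$. The main obstacle is matching these expressions with the explicit alternating-sign polynomial closed forms stated in the proposition. This is a mechanical but delicate calculation: one expands each rational coefficient $f_j, g_j, h_j$ from~\eqref{eq:coefs} using the above polynomial identity, carefully collects the resulting powers of $\Tbb$, and verifies the stated expressions for $N$ and $D$; I would guide the manipulation by first confirming the closed forms for small cases $v = 4, 5, 6$ (matching the $v=4$ case against Proposition~\ref{prop:both}) before proceeding to the general algebraic identity.
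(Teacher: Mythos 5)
Your proposal is correct and is essentially the approach the paper intends: the paper only carries out the argument explicitly for $v=4$ (Theorems~\ref{thm:rationG} and~\ref{thm:rationG2}, via the auxiliary family $H_n$ with the top banana shrunk by one edge, the coupled linear system from~\eqref{eq:muledges} with $m=v-1$ and $m=v-2$, elimination to a second-order recurrence, and packaging as a rational generating function), and states that the general case follows along the same lines. Your $M_n$ is exactly the paper's $H_n$, your coupled system specializes at $v=4$ to~\eqref{qe:inthep}, and your divisibility checks (e.g., $\Tbb(\Tbb+1)^2\mid \alpha_{v-1}+(\Tbb+1)^2h_{v-2}$, verified by evaluating $f_m$ at $\Tbb=0,-1$) are the right way to extract the factor $\Tbb^n(\Tbb+1)^{2n+1}$.
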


A formal proof of Proposition~\ref{prop:allv} may be constructed along the lines we
will provide explicitly for the case $v=4$ in \S\ref{Mp}. 

\begin{example}
Consider the case $v=10$; the rational function $\alpha_{10}(r,t)$ is
{\small\[
\frac{1-(1-t+t^2-t^3+t^4-t^5+t^6)r}
{1-(2-3t+4t^2-5t^3+6t^4-7t^5+8t^6+t^7)r
+(8t^6-6t^7+5t^8-4t^9+3t^{10}-2t^{11}+t^{12})r^2}
\]}
and the coefficient of $r^{13}$ in the series expansion of this rational function is
a polynomial of degree~$91$:
\[
t^{91}+103t^{90}+4794 t^{89}+\cdots-2455891878317453988 t^{45}+\cdots
+866304t^2-81920 t+4096\quad.
\]
According to Proposition~\ref{prop:allv}, the Grothendieck class for the melonic
graph constructed by
\[
(((1,9,1),0,1),((1,9,1),1,2),((1,9,1),2,2),\cdots, ((1,9,1),12,2))
\]
equals
\[
\Tbb^{13}(\Tbb+1)^{27}\cdot\big(\Tbb^{91}+103\Tbb^{90}+\cdots
-2455891878317453988 \Tbb^{45}+\cdots -81920 \Tbb+4096\big)\quad.
\]
This may be verified by applying the explicit recursion obtained in~\S\ref{MGc}.
\qede\end{example}

\section{Vacua}\label{Mav}

In this section we focus on melonic vacuum graphs. We first observe that there is a
close relation between Grothendieck classes of vacuum graphs and of related 
{\em non-\/}vacuum graphs. For this discussion, the graphs are not necessarily assumed
to be melonic; however, the result will explain melonic relations such as the one observed 
in~\eqref{eq:relvnv}.

\smallskip
\subsection{Vacuum and non-vacuum graphs relations}
Assume $\Gamma^v$ is a graph with a distinguished edge:
\begin{center}
\includegraphics[scale=.5]{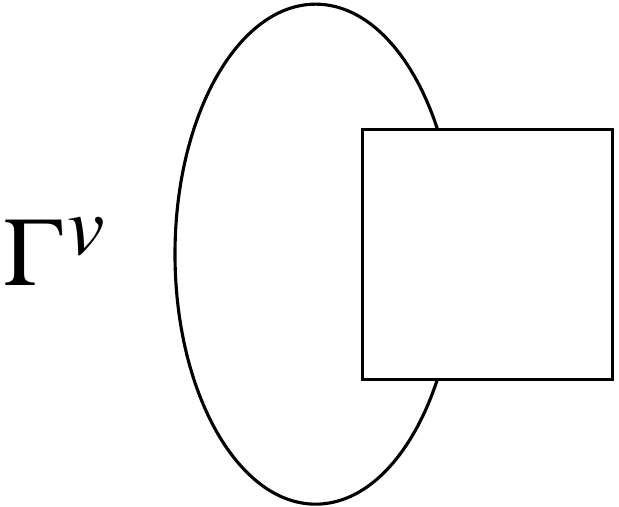}
\end{center}
and assume this edge is not a bridge in $\Gamma^v$. Consider two associated graphs:
the graph $\Gamma$ obtained by cutting the edge, and the graph $\overline \Gamma$
obtained by inserting a new edge crossing the given edge, with vertices as indicated:
\begin{center}
\includegraphics[scale=.5]{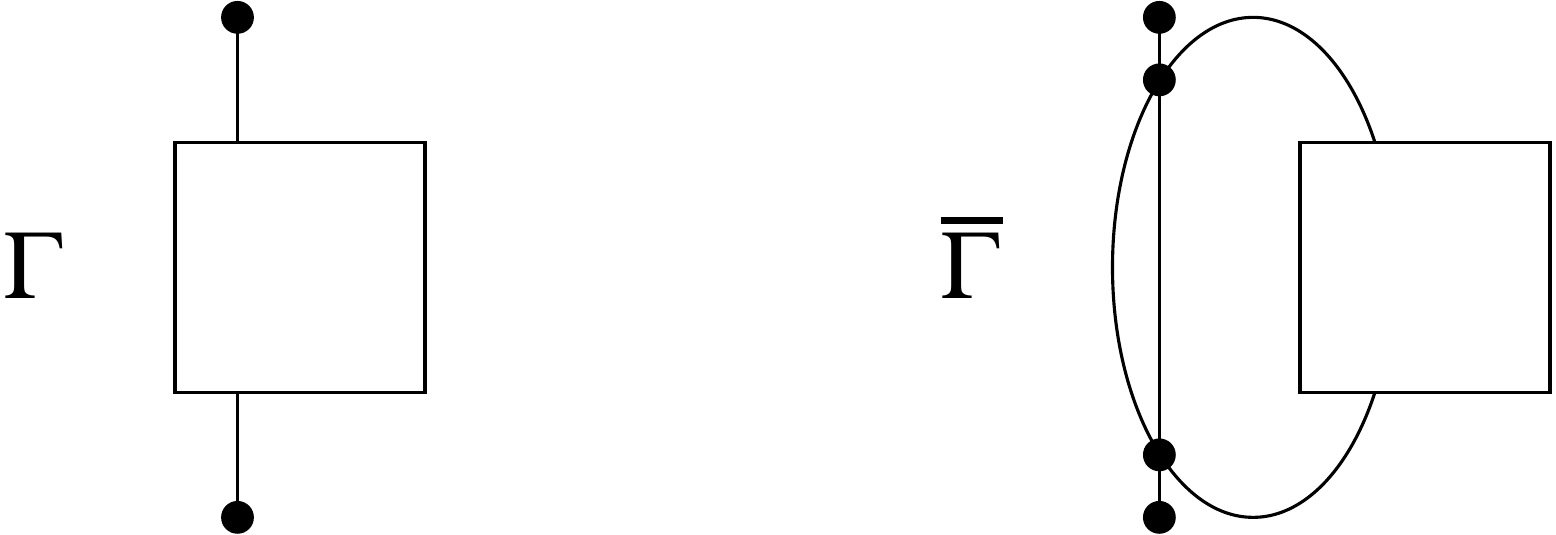}
\end{center}

\begin{lemma}\label{lemma:vnv}
\[
\Ubb(\Gamma^v)=\frac{\Ubb(\overline \Gamma)-\Tbb(\Tbb+1)^2 \Ubb(\Gamma)}{\Tbb(\Tbb+1)^4}
\]
\end{lemma}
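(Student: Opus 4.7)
The plan is to realize $\overline{\Gamma}$ as the result of doubling a single edge in an intermediate graph $G_0$, so that the parallel-edges formula~\eqref{eq:muledges}, \eqref{eq:coefs} with $m=2$ expresses $\Ubb(\overline{\Gamma})$ as a combination of classes that can in turn be reduced to $\Ubb(\Gamma^v)$ and $\Ubb(\Gamma^v\smallsetminus e)$ by repeated use of the edge-splitting and pendant-addition rules recalled in \S\ref{MGc}.

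Concretely, I would introduce $G_0$ obtained from $\Gamma^v$ by subdividing $e$ at two new valence-$2$ vertices $w_1,w_2$ (replacing $e$ by a path on the four vertices $u,w_1,w_2,v$) and by attaching a pendant edge at each of $w_1$ and $w_2$. Reading off the figure for $\overline{\Gamma}$, one sees that $\overline{\Gamma}=(G_0)_{2e_0}$, where $e_0$ is the middle edge $w_1w_2$. Each of the four operations producing $G_0$ multiplies the Grothendieck class by $\Tbb+1$, so $\Ubb(G_0)=(\Tbb+1)^4\Ubb(\Gamma^v)$. The hypothesis that $e$ is not a bridge of $\Gamma^v$ transfers to the statement that $e_0$ is neither a bridge nor a loop of $G_0$ (its removal keeps $w_1,w_2$ linked to $u,v$, while $u,v$ stay connected through $\Gamma^v\smallsetminus e$), so~\eqref{eq:muledges} applies.

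Next I would compute $\Ubb(G_0/e_0)$ and $\Ubb(G_0\smallsetminus e_0)$ by the same bookkeeping: contracting $e_0$ collapses $w_1,w_2$ to a single vertex and presents $G_0/e_0$ as $\Gamma^v$ with $e$ subdivided once and carrying two pendants at the new vertex, giving $(\Tbb+1)^3\Ubb(\Gamma^v)$; deleting $e_0$ separates the two subdivision points and exhibits $G_0\smallsetminus e_0$ as $\Gamma^v\smallsetminus e$ with a length-$2$ pendant path attached at each of $u$ and $v$, giving $(\Tbb+1)^4\Ubb(\Gamma^v\smallsetminus e)$. Substituting into~\eqref{eq:muledges} with $f_2=\Tbb-1$, $g_2=\Tbb+1$, $h_2=\Tbb$ from~\eqref{eq:coefs}, the two $\Ubb(\Gamma^v)$-contributions collapse to $\Tbb(\Tbb+1)^4\Ubb(\Gamma^v)$ and one is left with
\[
\Ubb(\overline{\Gamma})=\Tbb(\Tbb+1)^4\Ubb(\Gamma^v)+\Tbb(\Tbb+1)^4\Ubb(\Gamma^v\smallsetminus e)\quad.
\]

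Finally, $\Gamma$ is $\Gamma^v\smallsetminus e$ with a pendant edge attached at each of $u$ and $v$, so $\Ubb(\Gamma)=(\Tbb+1)^2\Ubb(\Gamma^v\smallsetminus e)$; feeding this identity in converts the second term into $\Tbb(\Tbb+1)^2\Ubb(\Gamma)$, and the resulting equation rearranges to the claim. The step I expect to require the most care is the initial identification of $\overline{\Gamma}$ as $(G_0)_{2e_0}$, i.e., correctly interpreting the ``new edge crossing the given edge'' operation in the figure as the doubling of the middle segment of a twice-subdivided $e$ with pendant edges at the two subdivision vertices; once this matching is in place, every remaining step is a routine application of the elementary rules of~\S\ref{MGc}.
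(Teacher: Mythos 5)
Your proposal is correct and follows essentially the same route as the paper: subdivide the distinguished edge twice, apply the $m=2$ case of~\eqref{eq:muledges} to the middle edge, and reduce the three resulting classes to $\Ubb(\Gamma^v)$ and $\Ubb(\Gamma^v\smallsetminus e)$ via the splitting/multiplicativity rules. The only difference is bookkeeping — you attach the two pendant edges to the intermediate graph before doubling so that $(G_0)_{2e_0}=\overline\Gamma$ exactly, whereas the paper doubles first and divides $\Ubb(\overline\Gamma)$ by $(\Tbb+1)^2$ afterwards — and your identifications of $\overline\Gamma$, the contraction, and the deletion all match the paper's.
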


\begin{proof}
This is an application of the formula for the effect on Grothendieck classes of adding one 
parallel edge to a given (non-bridge, non-looping) edge in a graph, i.e., the case $m=2$
of~\eqref{eq:muledges}. Place two valence-$2$ vertices on the 
joined edge in $\Gamma^v$, creating an edge $e$ in a graph $\Gamma'$; by construction,
$e$ is neither a bridge nor a looping edge. Then
\[
\Ubb(\Gamma')=(\Tbb+1)^2 \Ubb(\Gamma^v)\quad,\quad
\Ubb(\Gamma'/e)=(\Tbb+1) \Ubb(\Gamma^v)\quad,
\]
while $\Gamma'\smallsetminus e=\Gamma$. Replacing $e$ by two parallel edges
produces $\overline\Gamma$ without the two external edges. 
Applying~\eqref{eq:muledges} then gives
\[
\frac{\Ubb(\overline \Gamma)}{(\Tbb+1)^2} = f_2 (\Tbb+1)^2 \Ubb(\Gamma^v)
+g_2 (\Tbb+1) \Ubb(\Gamma^v) + h_2 \Ubb(\Gamma)\quad,
\]
that is (cf.~\eqref{eq:coefs})
\[
\frac{\Ubb(\overline \Gamma)}{(\Tbb+1)^2} = \Tbb (\Tbb+1)^2 \Ubb(\Gamma^v)
+ \Tbb \Ubb(\Gamma)\quad,
\]
with the stated result.
\end{proof}

In the applications we have in mind, $\Gamma$ may be a melonic non-vacuum graphs
constructed by
\[
((1,a_2,\dots, a_{r-1},1),0,1), t_2,\dots, t_n\quad;
\]
the graph $\Gamma^v$ will then be the (melonic) vacuum graph obtained by joining the 
two valence-$1$ vertices of $\Gamma$, and $\overline \Gamma$ is the non-vacuum graph 
constructed by
\[
((1,3,1),0,1), ((1,a_2,\dots, a_{r-1},1),1,2), t'_2,\dots, t'_n
\]
where $t'_i=(b_i,p_i+1,k_i)$ if $t_i=(b_i,p_i,k_i)$, $i=2,\dots,n$.
Lemma~\ref{lemma:vnv} shows that the class $\Ubb(\Gamma^v)$ of the vacuum graph 
is determined by the classes $\Ubb(\Gamma)$, $\Ubb(\overline\Gamma)$ of the 
associated non-vacuum graphs.

For example, with notation as in~\S\ref{MEcacI}, Lemma~\ref{lemma:vnv} implies that
\[
\Ubb(\Gamma'_{n+1})=\frac{\Ubb(\Gamma_{n+1})
-\Tbb(\Tbb+1)^2 \Ubb(\Gamma_n)}{\Tbb(\Tbb+1)^4}\quad;
\]
with $\Ubb(\Gamma_n)=\Tbb^n(\Tbb+1)^{2n+1} A_n(\Tbb)$ and $\Ubb(\Gamma'_n)
=\Tbb^{n-1} (\Tbb+1)^{2n-3}A'_n(\Tbb)$ as in~\S\ref{MEcacI}, this relation gives
\[
\Tbb^n (\Tbb+1)^{2n-1}A'_{n+1}(\Tbb)
=\frac{\Tbb^{n+1}(\Tbb+1)^{2n+3} A_{n+1}(\Tbb)
-\Tbb(\Tbb+1)^2 \Tbb^n(\Tbb+1)^{2n+1} A_n(\Tbb)}
{\Tbb(\Tbb+1)^4},
\]
that is,
\[
A'_{n+1}(\Tbb)=A_{n+1}(\Tbb)-A_n(\Tbb)\quad,
\]
and this proves~\eqref{eq:relvnv}. 

\smallskip
\subsection{Tree structure for valence-four vacua}
Next, we consider specifically melonic vacuum graphs in which every vertex has
valence~$4$. These graphs may be drawn as loopless unions of ovals:
\begin{center}
\includegraphics[scale=.5]{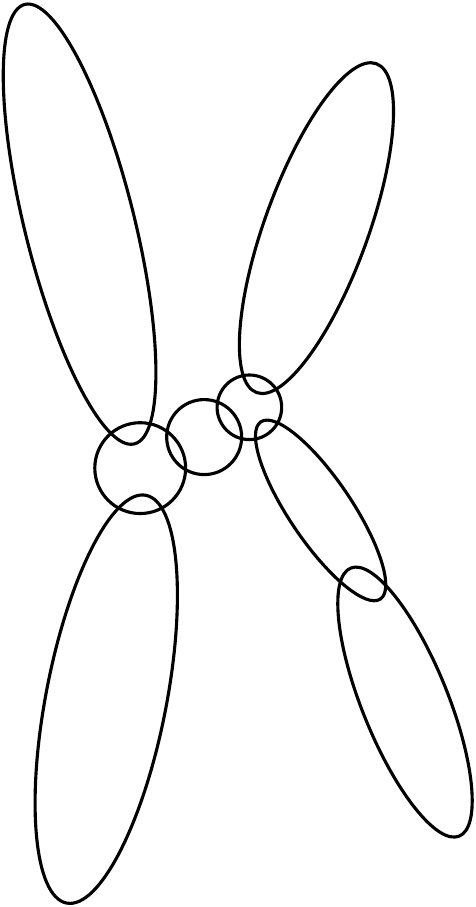}
\end{center}
The information carried by a vacuum melonic graph in valence~$4$ is equivalent to 
the information of a tree, for example the tree
\begin{center}
\includegraphics[scale=.5]{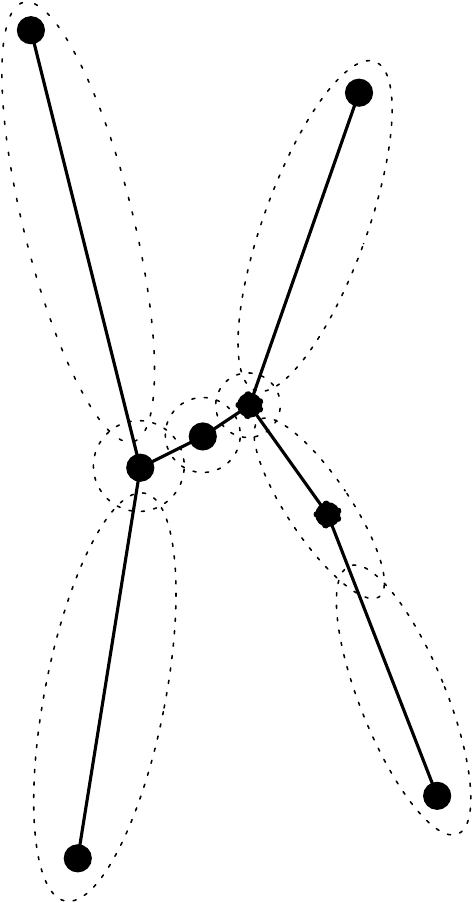}
\end{center}
for the graph shown above. 
Every node of this tree corresponds to one of the ovals,
and two nodes are connected by an edge if and only if the corresponding ovals meet.
Given a tree, a corresponding melonic construction is obtained in the evident way
by associating one arbitrary edge of the tree with a $4$-banana and labeling the
other edges with appropriate $((1,3,1),*,*)$ tuples as prescribed by adjacencies in 
the tree. For example, the edges of the above tree could be marked as follows
(where we also numbered the edge of the tree to reflect the stage of the
corresponding tuple in the melonic construction; many other choices are possible):
\begin{center}
\includegraphics[scale=.5]{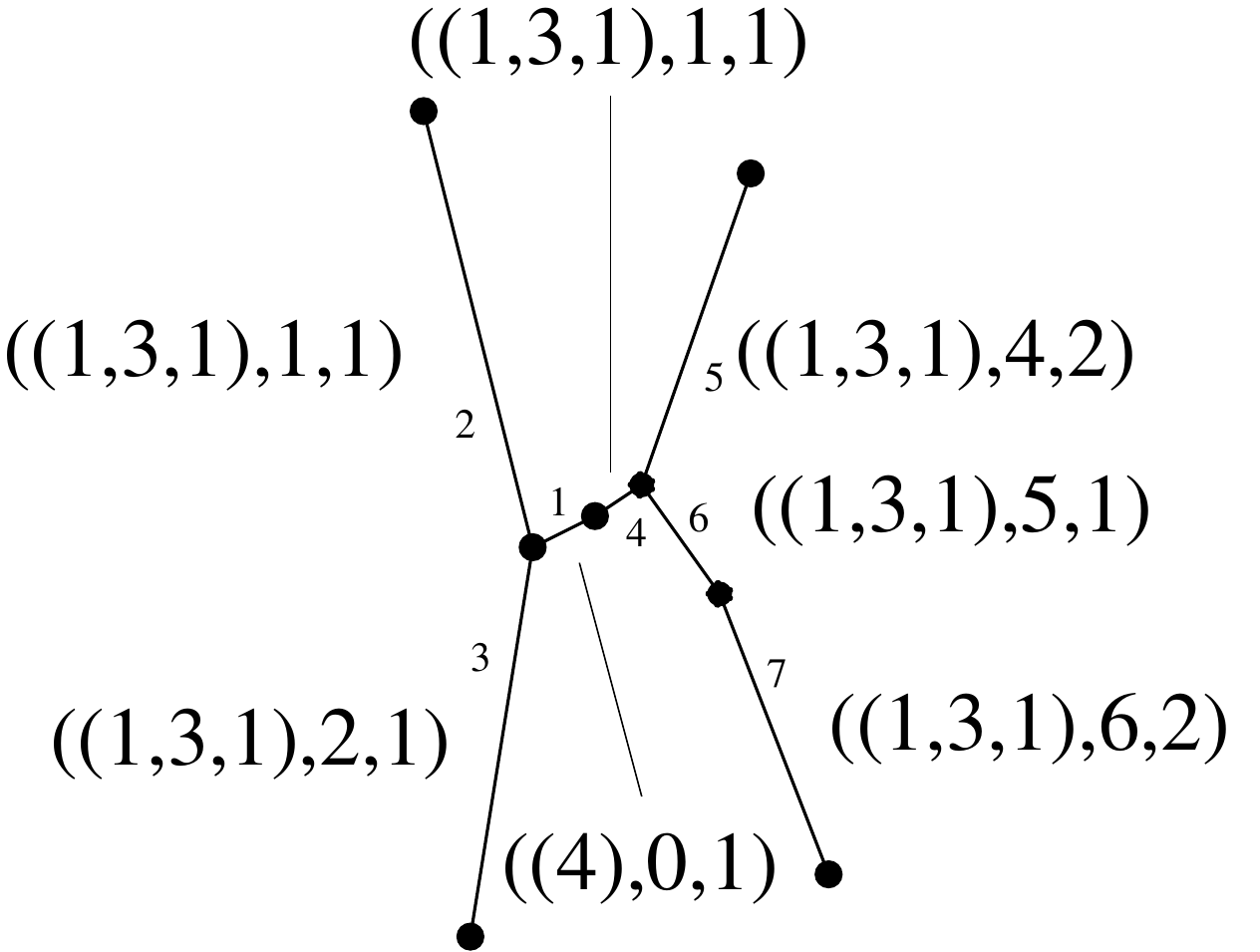}
\end{center}
leading to the melonic construction
\begin{multline}\label{eq:melcoor}
((4),0,1), ((1,3,1),1,1), ((1,3,1),2,1), ((1,3,1),1,1), \\
((1,3,1),4,2), ((1,3,1),5,1), ((1,3,1),6,2)\quad.
\end{multline}

Alternatively, one could label one node of the tree by a $2$-banana and the 
remaining nodes by $((1,3,1),*,*)$ tuples; the corresponding construction will
produce a vacuum melonic graph with two extra valence-$2$ vertices.
This strategy is used below in Example~\ref{ex:vstars}.

\smallskip
\subsection{Recursion relations for vacuum bubbles}
It is natural to ask whether a simple recursion may exist between the Grothendieck
classes of vacuum melonic graphs, reflecting the tree-like structure underlying them. 
The only instance known to us of such a recursion goes as follows. Assume a branch
of the tree projects out of the main body; let $U_n$ denote the Grothendieck class
of the vacuum melonic graph obtained by adding $n$ edges to such a branch.
\begin{center}
\includegraphics[scale=.5]{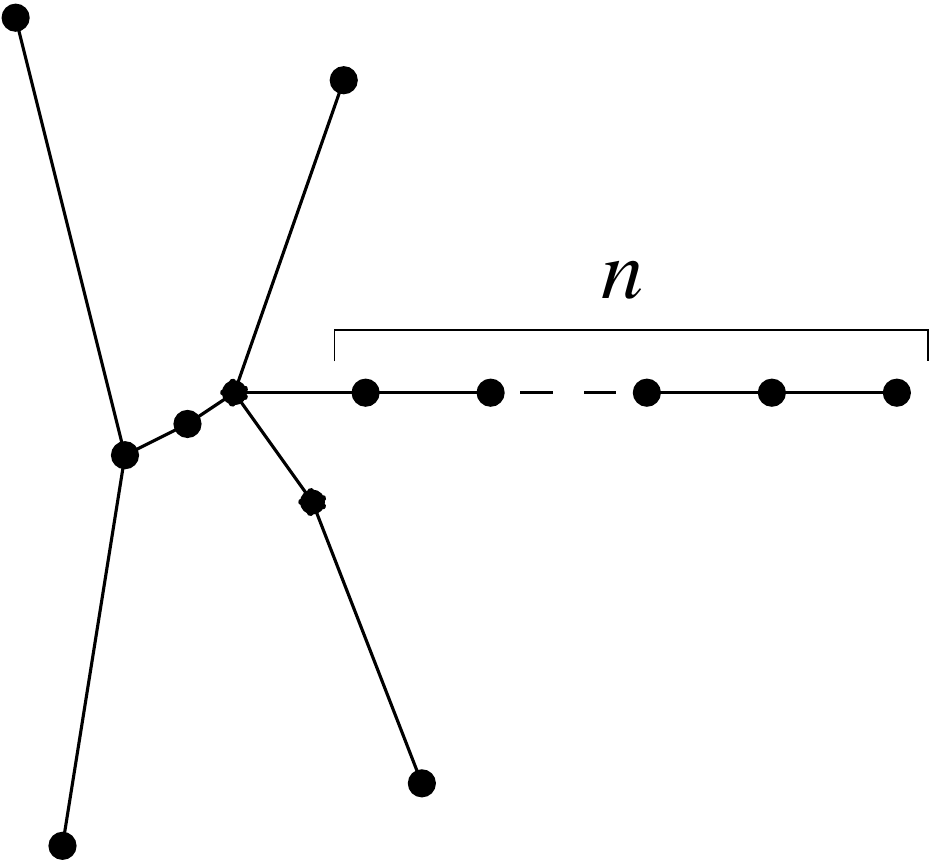}
\end{center}

\begin{claim}\label{claim:recurbra}
For $n\ge 2$,
\begin{equation}\label{eq:recurbra}
U_{n+1} = \Tbb(\Tbb+1)^2(\Tbb+2)\, U_n -2\Tbb^2(\Tbb+1)^4\, U_{n-1}\quad.
\end{equation}
\end{claim}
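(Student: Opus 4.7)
My plan is to apply the general recursion \eqref{eq:recurG} twice: first to the last $((1,3,1),*,2)$ tuple in the melonic construction of the extended branch, and then to the auxiliary construction that results. Together with an index shift this will yield the claimed two-term recursion, with all the auxiliary graphs eliminated.

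The first application, with $a_m = 3$, expresses $U_{n+1}$ in terms of $U_n$ (coming from $T'$ and $T''$, which correspond to $U_n$ with one edge subdivided twice or once, respectively, via the subdivision property of the Grothendieck class) and of $\Ubb(V'''_{n+1})$, where $V'''_{n+1}$ is the graph obtained from the $(n+1)$-stage construction by omitting the last tuple and reducing the preceding $3$-banana to a $2$-banana. Using $f_3 = \Tbb^2-\Tbb+1$, $g_3 = 2\Tbb^2+\Tbb-1$, $h_3 = \Tbb(\Tbb-1)$ from \eqref{eq:coefs}, the contribution from $T'$ and $T''$ is $(f_3(\Tbb+1)^2 + g_3(\Tbb+1))U_n = \Tbb(\Tbb+1)^3 U_n$, and the full expression becomes
\[
U_{n+1} \;=\; \Tbb(\Tbb+1)^3\, U_n \;+\; \Tbb(\Tbb-1)(\Tbb+1)^2\, \Ubb(V'''_{n+1})\quad.
\]
A second application of \eqref{eq:recurG} to $V'''_{n+1}$, whose last tuple is now $((1,2,1),*,2)$ so that $a_m=2$, and using $f_2 = \Tbb-1$, $g_2 = \Tbb+1$, $h_2 = \Tbb$, simplifies after an analogous calculation to
\[
\Ubb(V'''_{n+1}) \;=\; \Tbb(\Tbb+1)^2\big(U_{n-1} + \Ubb(V'''_n)\big)\quad.
\]

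To eliminate the auxiliary classes, I would shift the first identity down by one index to solve for $\Ubb(V'''_n)$ as $[U_n - \Tbb(\Tbb+1)^3 U_{n-1}]/[\Tbb(\Tbb-1)(\Tbb+1)^2]$, and substitute into the second. Multiplying both sides by $\Tbb(\Tbb-1)(\Tbb+1)^2$ cancels the apparent $(\Tbb-1)$ denominators and leaves
\[
U_{n+1} - \Tbb(\Tbb+1)^3 U_n \;=\; \Tbb^2(\Tbb-1)(\Tbb+1)^4 U_{n-1} + \Tbb(\Tbb+1)^2 U_n - \Tbb^2(\Tbb+1)^5 U_{n-1}\quad,
\]
which on collecting terms becomes exactly $U_{n+1} = \Tbb(\Tbb+1)^2(\Tbb+2) U_n - 2\Tbb^2(\Tbb+1)^4 U_{n-1}$. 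The main obstacle is the combinatorial bookkeeping of the non-reduced auxiliary construction $V'''_{n}$ and the verification that it reappears self-similarly after the second application of \eqref{eq:recurG}; in particular one must check that the two subdivisions produced by $T'$ and $T''$ indeed reduce to $U_n$ (resp.\ $U_{n-1}$) rather than to a further auxiliary graph. Once this is confirmed, the algebraic cancellation proceeds cleanly as above, and matches the denominator $1-(2+\Tbb)r+2r^2$ of the generating function of Proposition~\ref{prop:both}, consistent with the analogous recursion satisfied by $\Ubb(\Gamma_n)$ and $\Ubb(\Gamma'_n)$.
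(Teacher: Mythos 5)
Your argument is correct and is essentially the paper's own proof: your auxiliary graph $V'''_{n+1}$ is exactly the paper's $H_n$ (the chain with the top $3$-banana replaced by a $2$-banana), your two identities are precisely the coupled system~\eqref{qe:inthep} (note $f_3(\Tbb+1)^2+g_3(\Tbb+1)=\Tbb(\Tbb+1)^3$ and $f_2(\Tbb+1)^2+g_2(\Tbb+1)=\Tbb(\Tbb+1)^2$, as you computed), and the elimination of the auxiliary class is the same algebra, just organized by solving for $\Ubb(V'''_n)$ rather than by computing $(\Tbb-1)V_n$ directly. The self-similarity you flag as the main obstacle does hold: the $T'''$ of the $(1,2,1)$-stage is again the same kind of auxiliary graph one step down, and the $T'$, $T''$ terms are genuine subdivisions of $G_n$ (resp.\ $G_{n-1}$), so the bookkeeping closes up exactly as you describe.
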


We will prove this formula in~\S\ref{Mp}; in fact we will prove that this formula
holds even if the starting graph is not melonic. This will be our main tool in the proofs
of the propositions stated thus far, as well as Proposition~\ref{prop:stars}, stated
below.

\begin{example}\label{ex:vstars}
Let $\Sigma^s_n$ be the vacuum melonic graph corresponding to the star-shaped
tree
\begin{center}
\includegraphics[scale=.4]{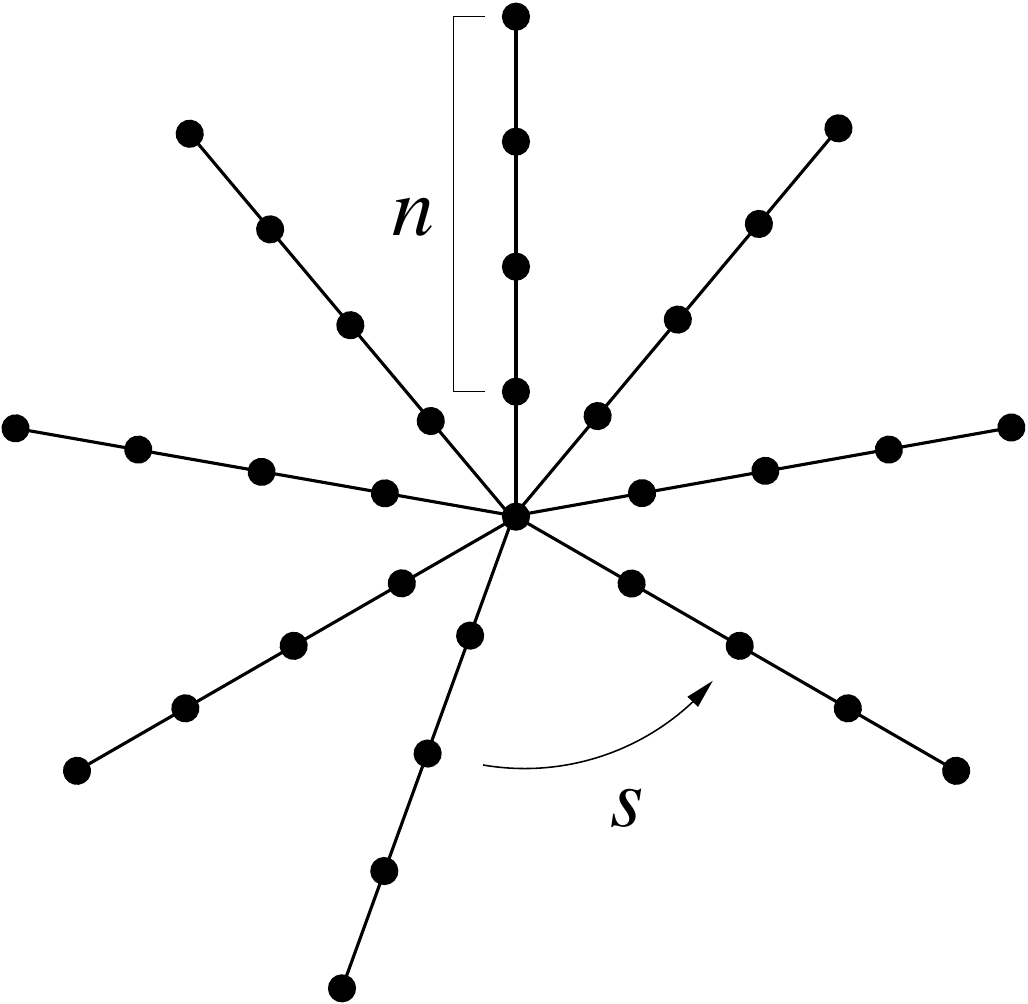}
\end{center}
with $s$ rays and $n$ nodes along each ray. For example, $\Sigma^3_4$ is the
following melonic vacuum star: 
\begin{center}
\includegraphics[scale=.3]{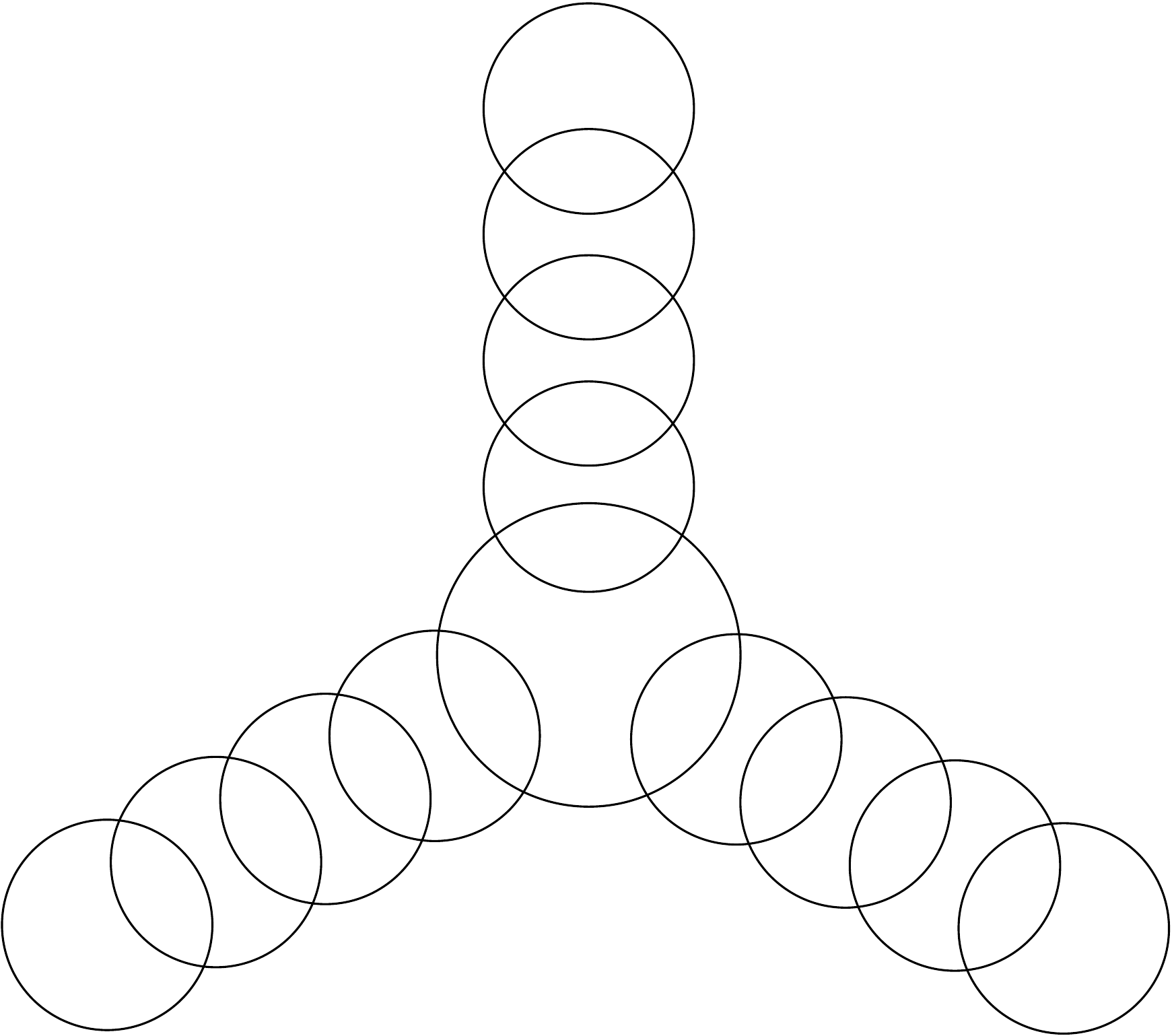}
\end{center}
Interpreting the central node as a $2$-banana (thus adding two valence-$2$ vertices
to the corresponding circle) leads to the following melonic construction for
$\Sigma^s_n$:
\begin{align*}
((2),0,1), &((\underbrace{1,3,1,\dots, 3,1}_{\text{$s$ `$3$'}}),1,1),\\
&((1,3,1),2,2),\dots, ((1,3,1),2,2s), \\
&((1,3,1),3,2),\dots, ((1,3,1),2+s,2), \\
&((1,3,1),3+s,2),\dots, ((1,3,1),2+2s,2), \\
& \qquad\dots, \\
&((1,3,1),3+(n-3)s,2),\dots, ((1,3,1),2+(n-2)s,2)
\end{align*}
(Of course many alternatives are possible.)
This construction may be used to compute Grothendieck classes in specific 
examples, by using the recursion obtained in~\S\ref{MGc}
(and dividing by $(\Tbb+1)^2$ to account for the two additional valence-$2$ 
vertices arising in the construction). On the basis of 
extensive data, one can formulate the following statement.

\begin{prop}\label{prop:stars}
Let $\sigma^s_n(t)$ be the polynomials defined by the expansion
\[
\frac{1-2r+((s-1)t-(s-2))r^2}{1-(2+t)r+2r^2}=
1+\sum_{n\ge 0} \sigma^s_n(t)\, r^{n+1}\quad.
\]
Then for $s,n\ge 1$
\[
\Ubb(\Sigma^s_n) = \Tbb^{sn} (\Tbb+1)^{2sn-1} A_n(\Tbb)^{s-1} \sigma^s_n(\Tbb)\quad,
\]
where $A_n(t)$ is the polynomial appearing in Proposition~\ref{prop:GCgamman}.
\end{prop}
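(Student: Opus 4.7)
The plan is to induct on $s$, using Claim~\ref{claim:recurbra} to execute the inductive step by extending one ray of the star at a time. The crucial observation is that, after the rescaling $w_k:=U_k/[\Tbb^k(\Tbb+1)^{2k}]$, the branch recursion~\eqref{eq:recurbra} simplifies to $w_{k+1}=(\Tbb+2)w_k-2w_{k-1}$. This is exactly the recursion whose generating function has denominator $1-(2+\Tbb)r+2r^2$---precisely the denominator appearing both in Proposition~\ref{prop:both} (for $A_n$) and in the definition of $\sigma^s_n$. So the same second-order recursion governs all three sequences $A_n(\Tbb)$, $\sigma^s_n(\Tbb)$, and the rescaled Grothendieck classes of a star along a single ray.

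For the inductive step, fix $s\ge 1$ and $n\ge 1$, and for $k\ge 0$ let $\Xi_k$ denote the vacuum melonic graph obtained from $\Sigma^s_n$ by attaching one additional ray of length $k$ at the central node, so that $\Xi_0=\Sigma^s_n$ and $\Xi_n=\Sigma^{s+1}_n$; set $U_k:=\Ubb(\Xi_k)$. Since Claim~\ref{claim:recurbra} pins down $U_k$ for $k\ge 2$ in terms of $U_{k-1}$ and $U_{k-2}$, the whole sequence is determined by two initial values. The first, $w_0=\Ubb(\Sigma^s_n)$, is given by the inductive hypothesis. The second, $w_1$, requires a direct computation of $\Ubb(\Xi_1)$: this is the effect of a single bananification $((1,3,1),\ast,\ast)$ applied at the central node of $\Sigma^s_n$, whose effect on $\Ubb$ is controlled by~\eqref{eq:muledges} together with the multiplicativity of $\Ubb$ for graphs joined at a vertex and the banana formula~\eqref{eq:ban}.

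The base case $s=1$ is essentially an identification: $\Sigma^1_n$ is the chain of $n+1$ ovals, i.e.\ $\Sigma^1_n=\Gamma'_{n+1}$, and the $s=1$ specialization of the defining numerator is $(1-r)^2$, so $\sigma^1_n(t)=A'_{n+1}(t)$ by Proposition~\ref{prop:both}; the claim then reduces to Proposition~\ref{prop:GCgammanv}. For the inductive step, once $w_0$ and $w_1$ are in closed form, the generating function $\sum_{k\ge 0} w_k r^k$ is the unique rational function with denominator $1-(2+\Tbb)r+2r^2$ and the prescribed two initial coefficients. Matching this against $A_n(\Tbb)^s\cdot\sigma^{s+1}_n(\Tbb)$ is then a formal polynomial identity in $r$ and $\Tbb$ that follows from the inductive expression for $w_0$ and the closed form of $w_1$. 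The exponent $2sn-1$ of $(\Tbb+1)$ accumulates consistently: the base case contributes $2n-1$, and each of the $s-1$ inductive steps adds $2n$ through the rescaling by $\Tbb^n(\Tbb+1)^{2n}$.

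The principal obstacle is the explicit computation of $w_1$ together with the subsequent verification that the numerator of $\sum_k w_k r^k$ factors as $A_n(\Tbb)^s$ times $(1-2r+((s+1-1)\Tbb-((s+1)-2))r^2)$. A naive multiplicativity argument of the shape ``$U_1=\Bb_4\cdot\Ubb(\Sigma^s_n)$'' fails, because the central node is already shared by the $s$ existing rays and the bananification genuinely requires the three-term relation~\eqref{eq:muledges}; one must therefore track the contraction and deletion terms carefully. The delicate point is that, at each inductive step, exactly one extra factor of $A_n(\Tbb)$ must split off from the numerator, reflecting the fact that adding a new ray amounts to one more run of the linear recursion along a branch. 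This clean extraction ultimately relies on the specific initial conditions $A_0(\Tbb)=1$ and $A_1(\Tbb)=1+\Tbb$ of Proposition~\ref{prop:both}.
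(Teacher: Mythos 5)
Your overall strategy is the one the paper actually follows: induct on $s$, grow the new ray one $(1,3,1)$-bananification at a time, and exploit the fact that after rescaling by $\Tbb^k(\Tbb+1)^{2k}$ the branch recursion of Claim~\ref{claim:recurbra} becomes $w_{k+1}=(\Tbb+2)w_k-2w_{k-1}$, with the same denominator $1-(2+\Tbb)r+2r^2$ that defines $A_n$ and $\sigma^s_n$. Your base case $\Sigma^1_n=\Gamma'_{n+1}$, $\sigma^1_n=A'_{n+1}$ is correct, as is the exponent bookkeeping. However, there is a genuine gap exactly where you flag ``the principal obstacle'': you never compute the second initial value, and without it the ``formal polynomial identity'' you invoke has nothing to match against. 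The missing idea is concrete and geometric. Writing $G=\Sigma^{s}_n$ and letting $e$ be an edge of the central circle, the non-bridge case of~\eqref{eq:muledges} (packaged in the paper as Theorem~\ref{thm:rationG2}) expresses the whole generating function in terms of $\Ubb(G)$ and the \emph{deletion} $\Ubb(G\smallsetminus e)$; and $\Sigma^{s}_n\smallsetminus e$ is a join at a single vertex of $s$ chains of $n$ circles, so multiplicativity of $\Ubb$ over one-vertex joins gives
\[
\Ubb(\Sigma^{s}_n\smallsetminus e)=\frac{\Ubb(\Gamma_n)^{s}}{(\Tbb+1)^{s+1}}
=\Tbb^{sn}(\Tbb+1)^{2sn-1}A_n(\Tbb)^{s}\quad.
\]
This deletion term is the \emph{only} source of the extra power of $A_n$ at each inductive step: the terms proportional to $\Ubb(\Sigma^{s}_n)$ carry just $A_n^{s-1}$, and the final numerator $\frac{1-2r+((s)\Tbb-(s-1))r^2}{1-(\Tbb+2)r+2r^2}$ is assembled by adding the deletion contribution $\frac{r^2(\Tbb-1)}{1-(\Tbb+2)r+2r^2}$ to the inductive one. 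Your closing diagnosis — that the clean extraction of the extra $A_n$ factor ``relies on the initial conditions $A_0=1$, $A_1=1+\Tbb$'' — is therefore off target: it relies on the product structure of the deletion, not on the seed values of the $A_n$ recursion. Supplying this identification (and then carrying out the one-line addition of the two rational functions) is precisely what turns your outline into a proof.
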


For example, according to the above definition,
\[
\sigma^{11}_6(t)=t^7+22t^6+139t^5+290t^4-8t^3-424t^2-44t+88\quad,
\]
and one finds
\begin{multline*}
\Tbb^{66} (\Tbb+1)^{131} A_6(\Tbb)^{10} \sigma^{11}_6(\Tbb) \\
=\Tbb^{264}+263\, \Tbb^{263}+34211\, \Tbb^{262}+2935019\, \Tbb^{261}
+\cdots+26065315469197312\,\Tbb^{76}\quad,
\end{multline*}
matching the result of the computation of the Grothendieck class $\Ubb(\Sigma^{11}_6)$
\begin{center}
\includegraphics[scale=.5]{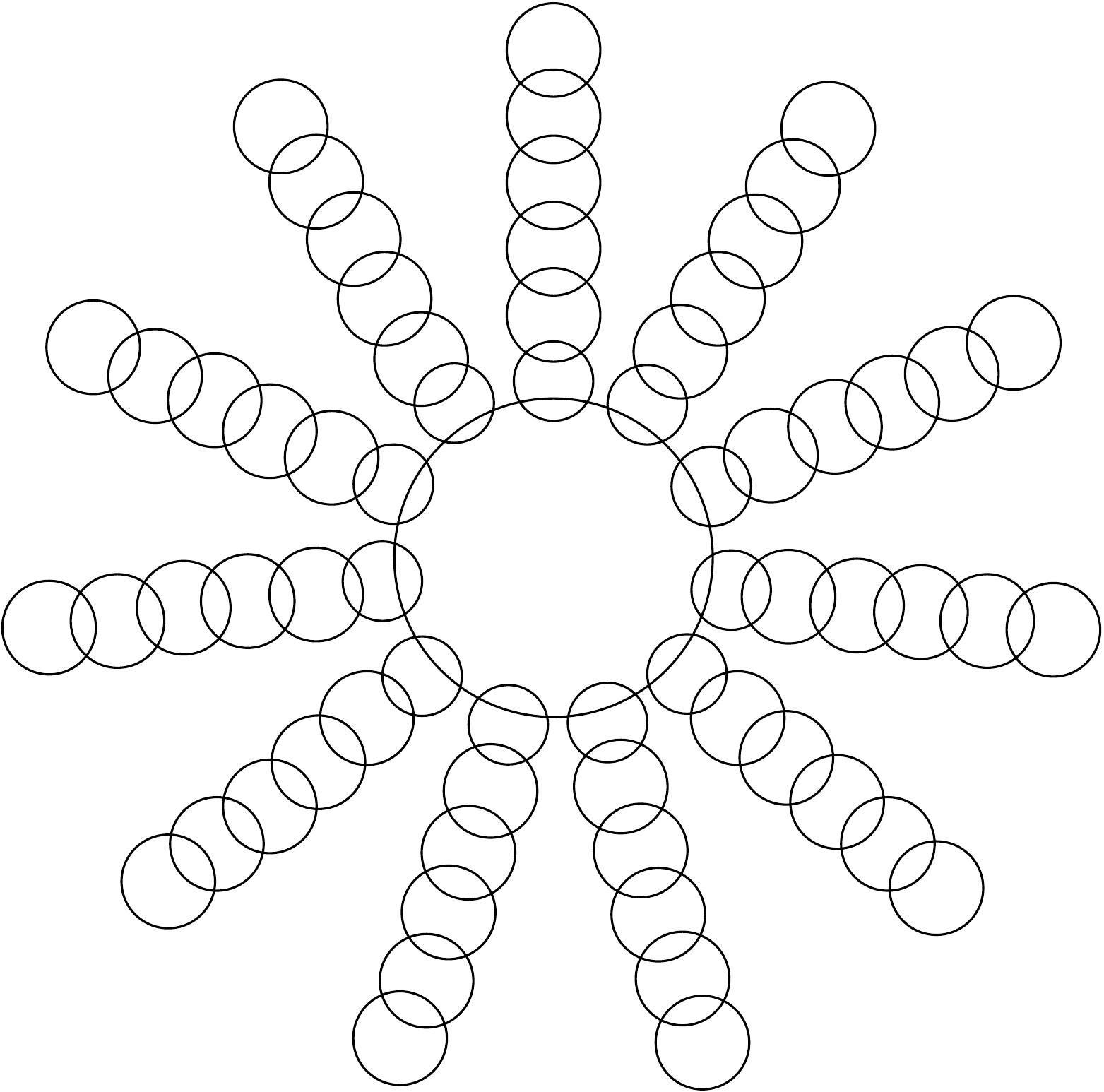}
\end{center}
by means of the basic recursion obtained in~\S\ref{MGc}.
\qede\end{example}

The proof of Proposition~\ref{prop:stars} is given in~\S\ref{Mav}.
We record the following consequence, which calls for a more geometric explanation.
The relation \eqref{Sigmasigma} below suggests that 
the complement of the hypersurface $\hat X_{\Sigma^s_n}$
may be realized as a fibration over products of complements of $\hat X_{\Gamma_n}$.
This suggests the possible presence of interesting geometric relations between these 
families of graph hypersurfaces. 

\begin{corol}\label{cor:diviSi}
If $s\le 2n$, then $\Ubb(\Gamma_n)^{s-1}$ divides $\Ubb(\Sigma^s_n)$.
More precisely,
\begin{equation}\label{Sigmasigma}
\Ubb(\Sigma^s_n) = \Tbb^n (\Tbb+1)^{2n-s} \Ubb(\Gamma_n)^{s-1} \sigma^s_n(\Tbb)\quad.
\end{equation}
\end{corol}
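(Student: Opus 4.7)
The plan is to derive Corollary~\ref{cor:diviSi} as an essentially formal consequence of the two explicit factorizations already established in the paper, namely Proposition~\ref{prop:GCgamman}, giving
\[
\Ubb(\Gamma_n) = \Tbb^n (\Tbb+1)^{2n+1} A_n(\Tbb),
\]
and Proposition~\ref{prop:stars}, giving
\[
\Ubb(\Sigma^s_n) = \Tbb^{sn} (\Tbb+1)^{2sn-1} A_n(\Tbb)^{s-1} \sigma^s_n(\Tbb).
\]
Since both formulas isolate a common $A_n(\Tbb)^{s-1}$ factor, the identity~\eqref{Sigmasigma} should fall out by matching exponents of $\Tbb$ and $\Tbb+1$.

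First, I would raise the formula for $\Ubb(\Gamma_n)$ to the $(s-1)$-st power, obtaining
\[
\Ubb(\Gamma_n)^{s-1} = \Tbb^{n(s-1)}(\Tbb+1)^{(2n+1)(s-1)} A_n(\Tbb)^{s-1}.
\]
Next, I would multiply both sides by $\Tbb^n (\Tbb+1)^{2n-s}\,\sigma^s_n(\Tbb)$ and collect exponents: for the $\Tbb$ factor, $n + n(s-1) = ns$; for the $(\Tbb+1)$ factor, $(2n-s)+(2n+1)(s-1) = 2n-s+2ns+s-2n-1 = 2ns-1$. Comparing with Proposition~\ref{prop:stars} gives
\[
\Tbb^n(\Tbb+1)^{2n-s}\,\Ubb(\Gamma_n)^{s-1}\,\sigma^s_n(\Tbb) = \Tbb^{ns}(\Tbb+1)^{2ns-1}A_n(\Tbb)^{s-1}\sigma^s_n(\Tbb) = \Ubb(\Sigma^s_n),
\]
which is precisely~\eqref{Sigmasigma}.

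Finally, for the divisibility assertion, I would observe that the hypothesis $s\le 2n$ is used exactly to guarantee that the exponent $2n-s$ is nonnegative, so that $(\Tbb+1)^{2n-s}$ is a genuine polynomial rather than a rational expression; once this is noted, the displayed identity exhibits $\Ubb(\Gamma_n)^{s-1}$ as an explicit polynomial factor of $\Ubb(\Sigma^s_n)$. There is no substantive obstacle in this argument: all of the work has been absorbed into Proposition~\ref{prop:stars}, and only exponent arithmetic remains. The one point worth stating explicitly in the write-up is the role of the bound $s\le 2n$, since it is the only input beyond the two previous propositions and makes transparent why the statement is formulated as a divisibility relation rather than as the sharper identity, which holds over $\Zbb[\Tbb,\Tbb^{-1},(\Tbb+1)^{-1}]$ in general.
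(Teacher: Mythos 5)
Your proposal is correct and matches the paper's own proof, which likewise obtains~\eqref{Sigmasigma} by combining Proposition~\ref{prop:stars} with the factorization of $\Ubb(\Gamma_n)$ from Proposition~\ref{prop:GCgamman}; the exponent bookkeeping you carry out is exactly what the paper leaves implicit. Your added remark that $s\le 2n$ serves only to make the exponent $2n-s$ nonnegative, and hence the divisibility genuine, is a correct and worthwhile clarification.
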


\begin{proof}
The given equality follows from the formula given in Proposition~\ref{prop:stars} and
the expression for $\Ubb(\Gamma_n)$ obtained in Proposition~\ref{prop:GCgamman}.
\end{proof}

\begin{remark}\label{rem:diviex}
Corollary~\ref{cor:diviSi} implies the divisibility relation~\eqref{eq:divi} observed 
in~\S\ref{MEcacI}. Indeed, the graph $\Sigma^2_n$ consists of a string of $2n+1$
circles:
\begin{center}
\includegraphics[scale=.35]{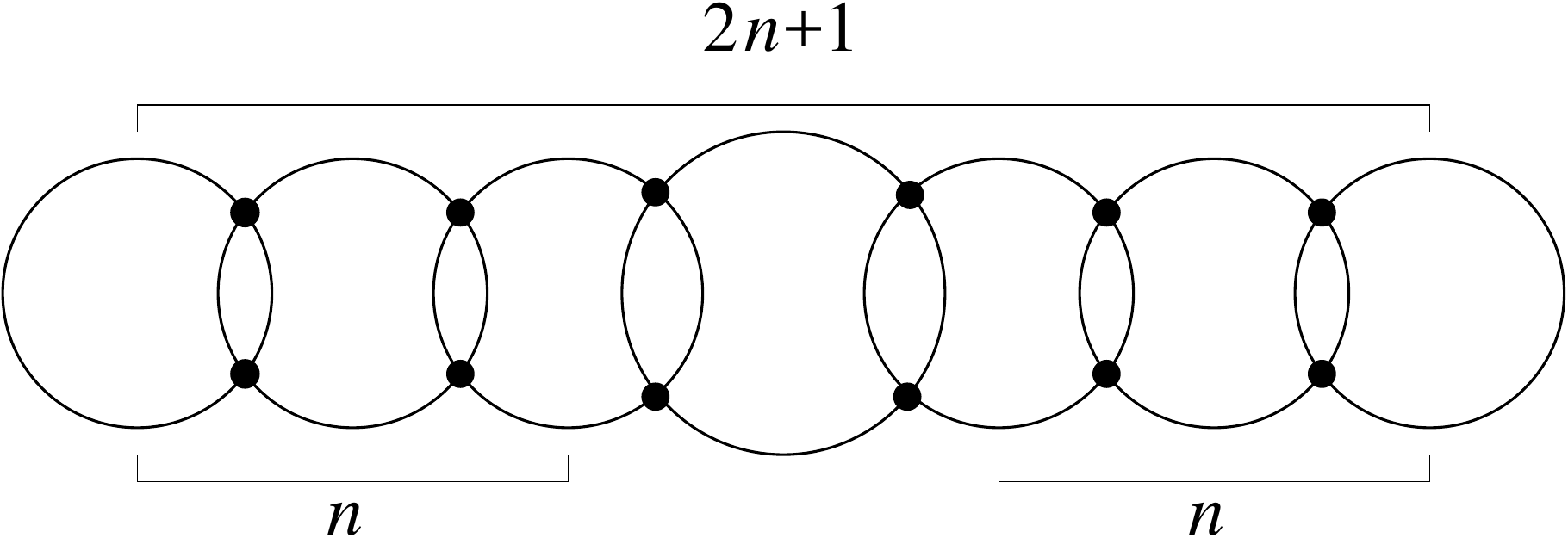}
\end{center}
That is, $\Sigma^2_n=\Gamma'_{2n+1}$, with notation as in~Example~\ref{ex:gammavn}.
For this graph, Corollary~\ref{cor:diviSi} states that $\Ubb(\Gamma_n)$ divides 
$\Ubb(\Gamma'_{2n+1})$, and this is precisely the assertion in~\eqref{eq:divi}.
\qede\end{remark}

\section{Proofs}\label{Mp}

Proposition~\ref{prop:GCgamman} and Proposition~\ref{prop:GCgammanv} will be
proved in the equivalent form presented in~Proposition~\ref{prop:both}. 
For clarity we will focus on the case of valence~$4$ given in these propositions; 
the same method could be used to prove Proposition~\ref{prop:allv}. 

The statement we will prove will actually be substantially more general than 
Propositions~\ref{prop:GCgamman} and~\ref{prop:GCgammanv}: it consists
of a recursion ruling the Grothendieck classes of graphs obtained by extending
any given graph by a tower of $3$-bananas.

Let $G$ be a (not necessarily melonic) graph, and let $e$ be an edge of $G$. 
Let $G_n$ be the graph obtained by applying a chain of $(1,3,1)$-bananifications
starting from~$e$: 
\begin{center}
\includegraphics[scale=.4]{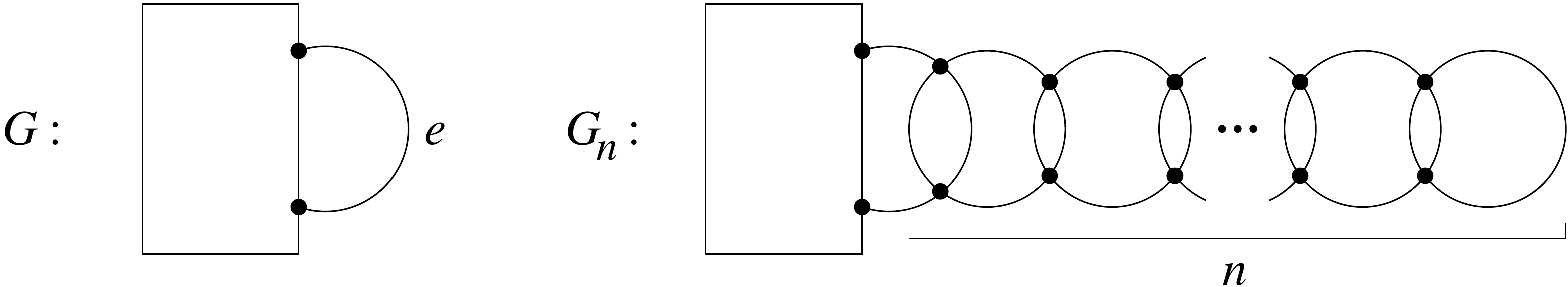}
\end{center}

\begin{thm}\label{thm:rationG}
The generating function for the Grothendieck classes $\Ubb(G_n)$ is rational, 
with denominator independent of $G$. More precisely, there exists a polynomial 
$P(\Tbb,\rho)$ with integer coefficients such that
\[
\sum_{n\ge 0} \Ubb(G_n) \rho^n = \frac{P(\Tbb,\rho)}
{1-\Tbb (\Tbb+1)^2 (\Tbb+2)\,\rho+2 \Tbb^2 (\Tbb+1)^4\,\rho^2}
\]
\end{thm}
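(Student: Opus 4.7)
The plan is to introduce an auxiliary family of graphs $H_n$ alongside $G_n$ and exhibit a $2\times 2$ coupled linear recursion for the pair $(\Ubb(G_n),\Ubb(H_n))$; the denominator appearing in the statement will then arise automatically from the Cayley--Hamilton relation for the transfer matrix. For $n\ge 1$, define $H_n$ to be the graph obtained from $G_n$ by deleting one of the three parallel edges of the terminal $3$-banana produced by the last bananification (the choice is immaterial by symmetry).

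To derive the recursion for $\Ubb(G_n)$, I would view the terminal $3$-banana of $G_n$ as obtained from an intermediate graph $G'_n$ by replacing a single edge $e^*$ by three parallel edges, where $G'_n$ is $G_{n-1}$ with the previously targeted edge $e^{(n-1)}$ subdivided twice. For $n\ge 2$ the edge $e^{(n-1)}$ lies inside a $3$-banana of $G_{n-1}$, so it is neither a bridge nor a looping edge; the same therefore holds for $e^*$ in $G'_n$. The multi-edge relation~\eqref{eq:muledges} with $m=3$ then gives
\[
\Ubb(G_n)=f_3\,\Ubb(G'_n)+g_3\,\Ubb(G'_n/e^*)+h_3\,\Ubb(G'_n\smallsetminus e^*).
\]
Now $\Ubb(G'_n)=(\Tbb+1)^2\Ubb(G_{n-1})$ (two subdivisions); $\Ubb(G'_n/e^*)=(\Tbb+1)\Ubb(G_{n-1})$ (contracting the middle edge of a $3$-chain leaves a single subdivision); and $\Ubb(G'_n\smallsetminus e^*)=(\Tbb+1)^2\Ubb(H_{n-1})$, since deleting the middle edge of a $3$-chain leaves two pendant edges attached at the endpoints of $e^{(n-1)}$ in $G_{n-1}\smallsetminus e^{(n-1)}=H_{n-1}$. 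Inserting the explicit values $f_3=\Tbb^2-\Tbb+1$, $g_3=2\Tbb^2+\Tbb-1$, $h_3=\Tbb(\Tbb-1)$ and simplifying yields
\[
\Ubb(G_n)=\Tbb(\Tbb+1)^3\,\Ubb(G_{n-1})+\Tbb(\Tbb-1)(\Tbb+1)^2\,\Ubb(H_{n-1}).
\]
An entirely analogous argument, with $3$ replaced by $2$ throughout and using $f_2=\Tbb-1$, $g_2=\Tbb+1$, $h_2=\Tbb$, gives
\[
\Ubb(H_n)=\Tbb(\Tbb+1)^2\,\Ubb(G_{n-1})+\Tbb(\Tbb+1)^2\,\Ubb(H_{n-1}).
\]

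Packaging these as $(\Ubb(G_n),\Ubb(H_n))^{\mathrm T}=M\,(\Ubb(G_{n-1}),\Ubb(H_{n-1}))^{\mathrm T}$, a short calculation gives $\operatorname{tr}(M)=\Tbb(\Tbb+1)^2(\Tbb+2)$ and $\det(M)=2\Tbb^2(\Tbb+1)^4$. Cayley--Hamilton then forces
\[
\Ubb(G_{n+1})=\Tbb(\Tbb+1)^2(\Tbb+2)\,\Ubb(G_n)-2\Tbb^2(\Tbb+1)^4\,\Ubb(G_{n-1})
\]
for $n$ large enough that both vector recursions are in effect. Multiplying by $\rho^{n+1}$ and summing over $n$ yields the claimed rational generating function with denominator $1-\Tbb(\Tbb+1)^2(\Tbb+2)\rho+2\Tbb^2(\Tbb+1)^4\rho^2$, the finitely many initial terms being absorbed into the numerator polynomial $P(\Tbb,\rho)$.

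The main point requiring care is the bookkeeping of the three terms in~\eqref{eq:muledges}: one must correctly identify what contracting versus deleting the middle edge of the $3$-chain of subdivisions does to the surrounding structure, and in particular that the deletion produces exactly two pendant edges contributing a factor $(\Tbb+1)^2$ and a residual graph isomorphic to $H_{n-1}$. Once this is correctly tracked, the remainder is elementary linear algebra. Small-$n$ irregularities (for example if $e$ itself is a bridge or a looping edge of $G$, so that the base of the recursion must be treated separately) affect only the numerator polynomial $P(\Tbb,\rho)$ and leave the universal denominator intact, which is precisely the content of the theorem.
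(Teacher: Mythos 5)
Your proposal is correct and follows essentially the same route as the paper: the paper also introduces the auxiliary family $H_n$ (the last $3$-bananification replaced by a $2$-bananification), derives exactly the same coupled system $U_n=\Tbb(\Tbb+1)^3U_{n-1}+(\Tbb-1)\Tbb(\Tbb+1)^2V_{n-1}$, $V_n=\Tbb(\Tbb+1)^2U_{n-1}+\Tbb(\Tbb+1)^2V_{n-1}$ via \eqref{eq:muledges} applied to the central edge of the doubly subdivided target edge, and obtains the same second-order recursion. The only cosmetic difference is that the paper eliminates $V_n$ by hand rather than invoking Cayley--Hamilton for the transfer matrix, and it additionally checks (in Theorem~\ref{thm:rationG2}) that the $\rho^2$ coefficient of the numerator vanishes even in the bridge case, a refinement not needed for the statement as given.
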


This statement focuses on the fact that the generating function is rational, and gives
an explicit form for its denominator, which depends on the bananification process itself
rather than on the graph $G$. The graph $G$ determines the numerator $P(\Tbb,\rho)$;
precise formulas will be given below in Theorem~\ref{thm:rationG2}. 
In practice, Theorem~\ref{thm:rationG} and a few case-by-case explicit computations 
of $\Ubb(G_n)$ for low values of $n$ determine $P(\Tbb,\rho)$.

\begin{proof}
Denote by $H_n$ the graph obtained from $G_n$ by replacing
the last $3$-bananification with a $2$-bananification:
\begin{center}
\includegraphics[scale=.4]{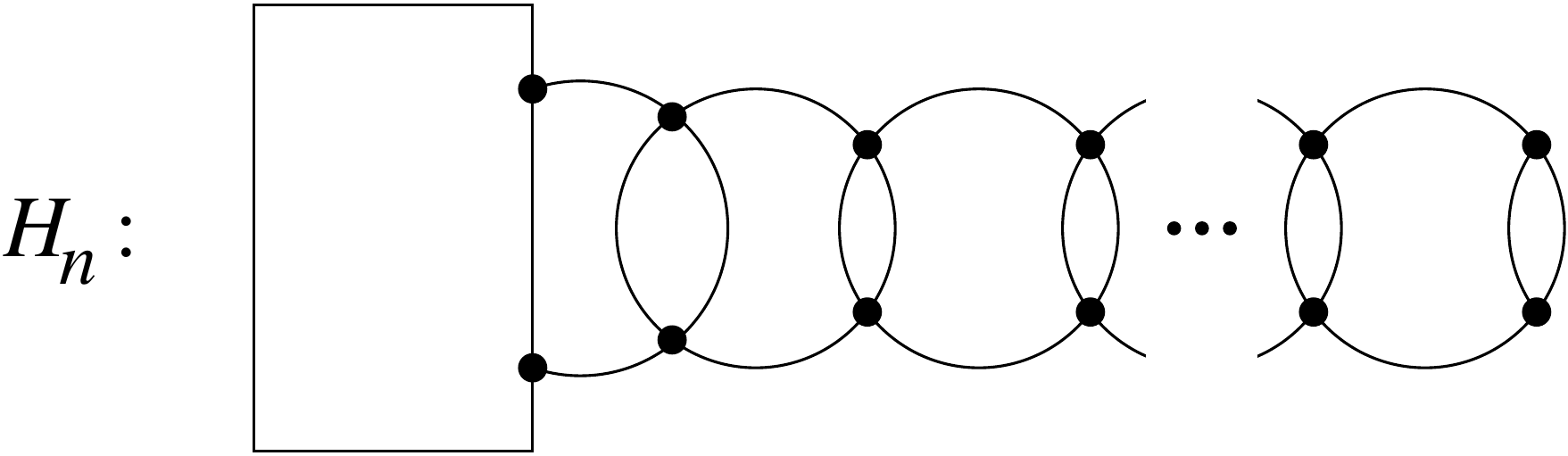}
\end{center}
Denote $\Ubb(G_n)$ by $U_n$, $\Ubb(H_n)$ by $V_n$.
Assume $n\ge 2$. Consider the graph $G''$ obtained by splitting one of the parallel
edges of the top banana in $G_{n-1}$ into three edges; let $e'$ be the central edge
so produced, and note that $e'$ is not a bridge or a looping edge of $G''$.
\begin{center}
\includegraphics[scale=.4]{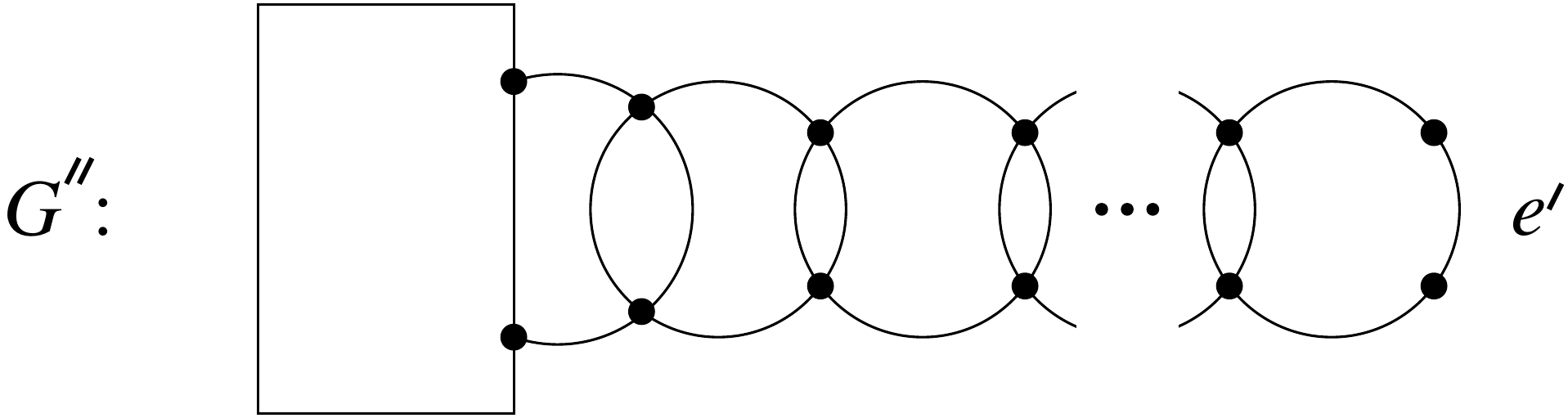}
\end{center}
The contraction $G':=G''/ e'$ may be obtained from $G_{n-1}$
by splitting the same edge of the top banana into {\em two\/} edges, and the 
deletion $H'=G''\smallsetminus e'$ may be obtained from $H_{n-1}$ by attaching 
two external edges to the vertices of the top ($2$-)banana:
\begin{center}
\includegraphics[scale=.4]{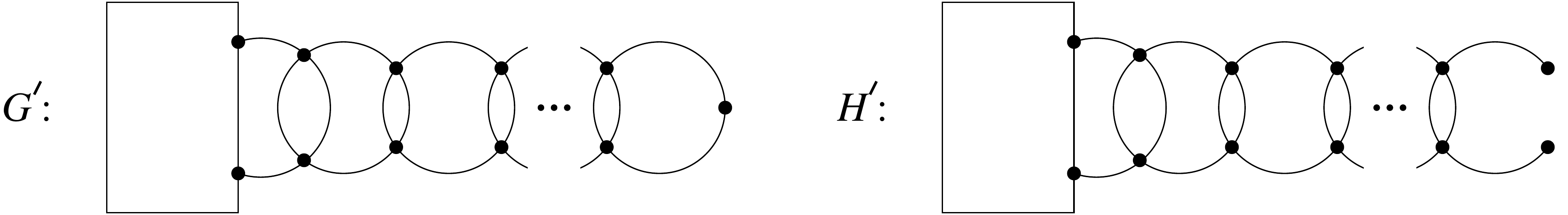}
\end{center}
By~\eqref{eq:muledges}, we have
\begin{align*}
V_n&=f_2\,\Ubb(G'')+g_2\,\Ubb(G')+h_2\,\Ubb(H') \\
U_n&=f_3\,\Ubb(G'')+g_3\,\Ubb(G')+h_3\,\Ubb(H') \quad,
\end{align*}
where $f_2,f_3$, etc., are as in~\eqref{eq:coefs}. We now note that
\[
\left\{
\aligned
\Ubb(G'')&=(\Tbb+1)^2 \Ubb(G_{n-1})= (\Tbb+1)^2 U_{n-1}\\
\Ubb(G')&=(\Tbb+1)\,\, \Ubb(G_{n-1})= (\Tbb+1)\,\, U_{n-1}\\
\Ubb(H')&=(\Tbb+1)^2 \Ubb(H_{n-1})= (\Tbb+1)^2 V_{n-1}
\endaligned
\right.\quad;
\]
further,
\[
f_2(\Tbb+1)^2 +g_2 (\Tbb+1)=\Tbb (\Tbb+1)^2\quad,\quad
f_3(\Tbb+1)^2 +g_3 (\Tbb+1)=\Tbb (\Tbb+1)^3
\]
while $h_2=\Tbb$, $h_3=(\Tbb-1)\Tbb$. The above formulas can then be rewritten
\begin{equation}\label{qe:inthep}
\begin{aligned}
V_n &= \Tbb(\Tbb+1)^2\, U_{n-1} + \Tbb (\Tbb+1)^2\, V_{n-1} \\
U_n &= \Tbb(\Tbb+1)^3\, U_{n-1} + (\Tbb-1)\Tbb (\Tbb+1)^2\, V_{n-1} \quad.
\end{aligned}
\end{equation}
These imply
\begin{align*}
(\Tbb-1) V_n &= (\Tbb-1)\Tbb(\Tbb+1)^2\, U_{n-1} + (\Tbb-1)\Tbb (\Tbb+1)^2\, V_{n-1} \\
&= (\Tbb-1)\Tbb(\Tbb+1)^2 \, U_{n-1} + \big(U_n-\Tbb(\Tbb+1)^3\, U_{n-1}\big) \\
&= U_n-2\Tbb(\Tbb+1)^2\,U_{n-1}
\end{align*}
and therefore
\begin{align*}
U_{n+1} &= \Tbb(\Tbb+1)^3\, U_n + (\Tbb-1)\Tbb (\Tbb+1)^2\, V_n \\
&= \Tbb(\Tbb+1)^3 \, U_n + \Tbb (\Tbb+1)^2\, 
\left(U_n-2\Tbb(\Tbb+1)^2\,U_{n-1}\right) \\
&= \Tbb(\Tbb+1)^2(\Tbb+2)\, U_n -2\Tbb^2(\Tbb+1)^4\, U_{n-1}\quad.
\end{align*}
(This proves Claim~\ref{claim:recurbra}.)
Now, for $n\ge 2$, the coefficient of $\rho^{n+1}$ in the product
\[
\big(1-\Tbb (\Tbb+1)^2 (\Tbb+2)\,\rho+2 \Tbb^2 (\Tbb+1)^4\,\rho^2\big)\cdot 
\sum_{n\ge 0} U_n\,\rho^n
\]
equals
\[
U_{n+1}-\Tbb(\Tbb+1)^2(\Tbb+2)\, U_n+2\Tbb^2(\Tbb+1)^4\, U_{n-1}=0\quad.
\]
This product is therefore a polynomial $P(\Tbb,\rho)$, and this proves the 
statement.
\end{proof}

The argument shows that
\begin{equation}\label{eq:pofT}
\begin{aligned}
P(\Tbb,\rho) &= \big(1-\Tbb (\Tbb+1)^2 (\Tbb+2)\,\rho+2 \Tbb^2 (\Tbb+1)^4\,\rho^2\big)\cdot 
\sum_{n\ge 0} U_n\,\rho^n \\
&=\Ubb(G)+(\Ubb(G_1)-\Tbb(\Tbb+1)^2 (\Tbb+2)\,\Ubb(G))\,\rho \\
&\qquad+\big(\Ubb(G_2)-\Tbb(\Tbb+1)^2(\Tbb+2)\, \Ubb(G_1)+2\Tbb^2(\Tbb+1)^4\, \Ubb(G)
\big)\,\rho^2\quad.
\end{aligned}
\end{equation}
If $e$ is not a bridge, then the argument proves the same recursion
for $n\ge 1$; it follows that the coefficient of $\rho^2$ in $P(\Tbb,\rho)$ is $0$ in this case.
Maybe a little surprisingly, the same conclusion holds if $e$ {\em is\/} a bridge
(as we will prove below);
thus, the polynomial $P(\Tbb,\rho)$ is of degree~$1$ in $\rho$.
This polynomial is determined by $\Ubb(G)$ and the deletion~$\Ubb(G\smallsetminus e)$,
as we will see below.

In fact, Theorem~\ref{thm:rationG} and the direct computation of a few values of $\Ubb(G_n)$
suffice to determine the numerator.

\begin{example}
The melonic valence-$4$ vacuum graphs corresponding to the trees
\begin{center}
\includegraphics[scale=.5]{branchoutor}
\end{center}
have melonic construction obtained by extending~\eqref{eq:melcoor}:
\begin{multline*}
((4),0,1), ((1,3,1),1,1), ((1,3,1),2,1), ((1,3,1),1,1), \\
((1,3,1),4,2), ((1,3,1),5,1), ((1,3,1),6,2), \\
((1,3,1),6,1),((1,3,1),8,2),\dots,((1,3,1),n+6,2)
\quad.
\end{multline*}
Using the recursion obtained in~\S\ref{MGc}, we can compute the following 
Grothendieck classes:
\begin{align*}
n=0&:\quad 
\Tbb^7(\Tbb+1)^{14}(\Tbb^7+13\Tbb^6+56\Tbb^5+80\Tbb^4-17\Tbb^3-77\Tbb^2+8) \\
n=1&:\quad
\Tbb^8(\Tbb+1)^{17}(\Tbb^7+14\Tbb^6+64\Tbb^5+94\Tbb^4-29\Tbb^3-100\Tbb^2+12\Tbb+8) \\
n=2&:\quad 
\Tbb^{10}(\Tbb+1)^{18}(\Tbb+3)(\Tbb^7+14\Tbb^6+64\Tbb^5+96\Tbb^4-19\Tbb^3-102\Tbb^2-6\Tbb+16)
\end{align*}
and this is (more than) enough information to determine $P(\Tbb,\rho)$: if 
$U_0,U_1,U_2$ are these three classes, the product 
\[
\big(1-\Tbb (\Tbb+1)^2 (\Tbb+2)\,\rho+2 \Tbb^2 (\Tbb+1)^4\,\rho^2\big)\cdot
(U_0+U_1\,\rho+U_2\,\rho^2) 
\]
equals
\begin{multline*}
\Tbb^7(\Tbb+1)^{14}(\Tbb^7+13\Tbb^6+56\Tbb^5+80\Tbb^4-17\Tbb^3-77\Tbb^2+8)\\
-2\,\Tbb^8(\Tbb+1)^{16}(2\Tbb^6+17\Tbb^5+39\Tbb^4+9\Tbb^3-33\Tbb^2-6\Tbb+4)\,\rho
\end{multline*}
modulo $\rho^3$. As expected, the coefficient of $\rho^2$ vanishes. The
polynomial $P(\Tbb,\rho)$ must equal this degree~$1$ polynomial in $\rho$.
\qede\end{example}

In general, $P(\Tbb,\rho)$ is determined by the Grothendieck classes of $G$ 
and (if $e$ is not a bridge) $G\smallsetminus e$, if the latter is known.

\begin{thm}\label{thm:rationG2}
With notation as above, let $r=\Tbb(\Tbb+1)^2\, \rho$. Then we have
\[
\sum_{n\ge 0} \Ubb(G_n) \rho^n = \frac{1-r}
{1- (\Tbb+2)\,r+2\, r^2}\cdot\Ubb(G)
\]
if $e$ is a bridge in $G$, and
\[
\sum_{n\ge 0} \Ubb(G_n)\, \rho^n = \frac{\Ubb(G)+\big((\Tbb-1)\,\Ubb(G\smallsetminus e)
-\Ubb(G)\big)\, r}
{1- (\Tbb+2)\,r+2\, r^2}
\]
if $e$ is not a bridge in $G$.
\end{thm}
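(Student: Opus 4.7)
The plan is to apply Theorem~\ref{thm:rationG}, which reduces the task to identifying the numerator $P(\Tbb,\rho)$. Setting $r=\Tbb(\Tbb+1)^2\rho$ converts the denominator to $1-(\Tbb+2)r+2r^2$, and by~\eqref{eq:pofT} the polynomial $P$ is determined by the three classes $U_0=\Ubb(G)$, $U_1=\Ubb(G_1)$, $U_2=\Ubb(G_2)$. Both cases of the theorem therefore reduce to the computation of a few initial Grothendieck classes.

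For the non-bridge case, I would first observe that the recursion derived in the proof of Theorem~\ref{thm:rationG} holds already at $n=1$. Indeed, the argument required the central split edge $e'$ in $G''$ to not be a bridge; for $n\ge 2$ this is automatic since $e'$ sits inside a $3$-banana, and for $n=1$ it holds precisely when $e$ itself is not a bridge in $G$. Thus $[\rho^2]P=0$ here. To compute $U_1$, I would apply~\eqref{eq:muledges} to the graph $G''$ obtained from $G$ by subdividing $e$ into three edges $e_1,e_2,e_3$, so that $G_1$ arises by replacing $e_2$ with three parallel edges. Using $\Ubb(G'')=(\Tbb+1)^2\Ubb(G)$, $\Ubb(G''/e_2)=(\Tbb+1)\Ubb(G)$, and $\Ubb(G''\smallsetminus e_2)=(\Tbb+1)^2\Ubb(G\smallsetminus e)$ (the last because $e_1,e_3$ become pendant bridges once $e_2$ is removed), together with the explicit values of $f_3,g_3,h_3$ from~\eqref{eq:coefs}, yields
\[
U_1=\Tbb(\Tbb+1)^3\Ubb(G)+\Tbb(\Tbb-1)(\Tbb+1)^2\Ubb(G\smallsetminus e).
\]
Substitution into $P(\Tbb,\rho)=U_0+(U_1-\Tbb(\Tbb+1)^2(\Tbb+2)U_0)\rho$ followed by a short simplification produces the claimed formula.

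For the bridge case, I would reduce to the non-bridge case via multiplicativity rather than computing $U_2$ directly. Writing $G=\Gamma_L\cup\{e\}\cup\Gamma_R$, one has $\Ubb(G)=(\Tbb+1)\Ubb(\Gamma_L)\Ubb(\Gamma_R)=(\Tbb+1)\Ubb(G\smallsetminus e)$, and similarly $G_n=\Gamma_L\cup C_n\cup\Gamma_R$, where $C_n$ is the chain produced by applying the bananification process to a single edge; hence $\Ubb(G_n)=\Ubb(C_n)\cdot\Ubb(G)/(\Tbb+1)$. This factorization reduces the problem to computing $\sum_{n\ge 0}\Ubb(C_n)\rho^n$. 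Now $\Ubb(C_0)=\Tbb+1$ and $\Ubb(C_1)=\Tbb(\Tbb+1)^4$ by direct multiplicativity; the key point is that the next edge $\tilde e$ to be bananified in $C_1$ lies inside the $3$-banana, hence is \emph{not} a bridge of $C_1$. Applying the non-bridge case (just proven) to $C_1$ then yields $\sum_{m\ge 0}\Ubb(C_{m+1})\rho^m$, using $\Ubb(C_1\smallsetminus\tilde e)=\Tbb(\Tbb+1)^3$. Combining with $\Ubb(C_0)=\Tbb+1$ and the identity $\rho\cdot\Tbb(\Tbb+1)^3=r(\Tbb+1)$ simplifies the result to $(\Tbb+1)(1-r)/(1-(\Tbb+2)r+2r^2)$; multiplying by $\Ubb(G)/(\Tbb+1)$ then gives the claim.

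I expect the main subtlety to be the bridge case: the recursion of Theorem~\ref{thm:rationG} genuinely fails at $n=1$ here, so one cannot conclude a priori that $P(\Tbb,\rho)$ is of degree one in $\rho$. The multiplicativity-based reduction circumvents this by isolating the bridge into the scalar factor $\Ubb(G)/(\Tbb+1)$ and handling the pure chain $C_n$ separately, so that starting from the first bananification the recursion is restored and the non-bridge formula can be invoked.
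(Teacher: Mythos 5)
Your proposal is correct. The non-bridge case is essentially the paper's argument: you observe that the recursion of Theorem~\ref{thm:rationG} already holds at $n=1$ when $e$ is not a bridge (so the $\rho^2$ coefficient of $P$ vanishes), and you compute $\Ubb(G_1)=\Tbb(\Tbb+1)^3\Ubb(G)+(\Tbb-1)\Tbb(\Tbb+1)^2\Ubb(G\smallsetminus e)$ exactly as the paper does. Where you genuinely diverge is the bridge case. The paper stays inside the recursion: it computes $U_1=\Tbb(\Tbb+1)^3U_0$ and $V_1=\Tbb(\Tbb+1)^2U_0$ via multiplicativity, pushes these through~\eqref{qe:inthep} to get $U_2=\Tbb^3(\Tbb+1)^4(\Tbb+3)U_0$, and then checks by hand that this coincides with $\Tbb(\Tbb+1)^2(\Tbb+2)U_1-2\Tbb^2(\Tbb+1)^4U_0$, forcing the $\rho^2$ coefficient of $P$ to vanish. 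You instead factor the bridge out entirely: writing $\Ubb(G_n)=\Ubb(C_n)\cdot\Ubb(G)/(\Tbb+1)$ for the pure chain $C_n$, you note that from the first bananification onward the relevant edge $\tilde e$ of $C_1$ sits inside a $3$-banana and so is not a bridge, which lets you invoke the already-proven non-bridge formula with $\Ubb(C_1)=\Tbb(\Tbb+1)^4$ and $\Ubb(C_1\smallsetminus\tilde e)=\Tbb(\Tbb+1)^3$; the algebra then collapses to $(\Tbb+1)(1-r)/(1-(\Tbb+2)r+2r^2)$ as required. Your route avoids the explicit $U_2$ computation and the ad hoc verification of the vanishing quadratic coefficient, at the cost of making the bridge case logically dependent on the non-bridge case; the paper's route is self-contained in each case but requires the somewhat unilluminating coincidence check. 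Both are valid, and your reduction arguably explains more transparently \emph{why} the degree of $P$ in $\rho$ drops to one even when $e$ is a bridge.
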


\begin{proof}
The argument proving Theorem~\ref{thm:rationG} shows that the coefficient of
$\rho^n$ in $P(\Tbb,\rho)$ is $0$ for $n\ge 3$, and for $n\ge 2$ if $e$ is not a bridge, as
observed above. If $e$ is a bridge, 
\[
U_1=(T+1)^2\, \Bb(3)\, \Ubb(G\smallsetminus e) = \Tbb(\Tbb+1)^4\, \Ubb(G\smallsetminus e) 
=  \Tbb(\Tbb+1)^3\, U_0\quad, 
\]
since $G_1$ is obtained by replacing the central split of $e$, a bridge, with a $3$-banana.
By the same token, 
\[
V_1=(T+1)^2\, \Bb(2)\, \Ubb(G\smallsetminus e) = \Tbb(\Tbb+1)^3\, \Ubb(G\smallsetminus e) 
=  \Tbb(\Tbb+1)^2\, U_0\quad.
\]
By~\eqref{qe:inthep} we have
\begin{align*}
U_2 &= \Tbb(\Tbb+1)^3\, U_1 + (\Tbb-1)\Tbb (\Tbb+1)^2\, V_1 \\
&= \Tbb^3(\Tbb+1)^4(\Tbb+3)\, U_0\quad.
\end{align*}
On the other hand,
\begin{align*}
\Tbb(\Tbb+1)^2(\Tbb+2)\, U_1-2\,\Tbb^2(\Tbb+1)^4\, U_0 &= 
\Tbb^2(\Tbb+1)^5(\Tbb+2) \,U_0 -2\,\Tbb^2(\Tbb+1)^4\, U_0 \\
&=\Tbb^3(\Tbb+1)^4(\Tbb+3)\, U_0\quad.
\end{align*}
This verifies that the coefficient of $\rho^2$ in $P(\Tbb,\rho)$ (see~\eqref{eq:pofT})
equals~$0$ in this case as well.
Therefore, in all cases we have 
\[
P(\Tbb,\rho) = \Ubb(G)+(\Ubb(G_1)-\Tbb(\Tbb+1)^2 (\Tbb+2)\,\Ubb(G))\,\rho\quad.
\]
If $e$ is a bridge, the coefficient of $\rho$ in $P(\Tbb,\rho)$ is
\[
\Ubb(G_1)-\Tbb(\Tbb+1)^2 (\Tbb+2)\,\Ubb(G)=-\Tbb(\Tbb+1)^2\,\Ubb(G)
\]
since $\Ubb(G_1)=U_1=\Tbb(\Tbb+1)^3\, U_0=\Tbb(\Tbb+1)^3\, \Ubb(G)$ as we 
observed above.
Therefore
\[
P(\Tbb,\rho) = \Ubb(G)-\Tbb(\Tbb+1)^2\,\Ubb(G) \rho=(1-r)\,\Ubb(G)
\]
if $e$ is a bridge, and this gives the first formula.

If $e$ is not a bridge, splitting it into three and $3$-bananifying the central edge
gives, arguing as in the proof of Theorem~\ref{thm:rationG},
\[
\Ubb(G_1)=\Tbb(\Tbb+1)^3\, \Ubb(G)+(\Tbb-1)\Tbb(\Tbb+1)^2\, \Ubb(G\smallsetminus e)
\]
and therefore
\[
\Ubb(G_1)-\Tbb(\Tbb+1)^2 (\Tbb+2)\,\Ubb(G)=
-\Tbb(\Tbb+1)^2\,\Ubb(G)+(\Tbb-1)\Tbb(\Tbb+1)^2 \,\Ubb(G\smallsetminus e)\quad.
\]
It follows that the degree-$1$ term in $P(\Tbb,\rho)$ in this case is
\[
(-\Ubb(G)+(\Tbb-1)\,\Ubb(G\smallsetminus e))\,\Tbb(\Tbb+1)^2\, \rho\quad,
\]
and this completes the proof of the second formula.
\end{proof}

The fact that the formulas in Theorem~\ref{thm:rationG2} depend on $r=\Tbb(\Tbb+1)^2\rho$
explains why the specific examples worked out in Propositions~\ref{prop:GCgamman}
and~\ref{prop:GCgammanv} included powers of $\Tbb$ and $\Tbb+1$ as stated. We recover
these results in the next two examples.

\begin{example}
Define the polynomials $A_n(t)$ by the power series expansion
\[
\sum_{n\ge 0} A_n(t) r^n = \frac{1-r}{1-(2+t)r+2r^2}
\]
(cf.~Proposition~\ref{prop:both}).
Then the first formula in Theorem~\ref{thm:rationG2} reads
\[
\sum_{n\ge 0} \Ubb(G_n) \rho^n = \left(\sum_{n\ge 0} A_n(t) r^n\right)\cdot\Ubb(G) 
= \left(\sum_{n\ge 0} A_n(\Tbb) \Tbb^n (\Tbb+1)^{2n} \rho^n\right)\cdot\Ubb(G)\quad.
\]
Equivalently,
\begin{equation}\label{eq:GCGn}
\Ubb(G_n) = \Tbb^n (\Tbb+1)^{2n} A_n(\Tbb)\cdot \Ubb(G)\quad.
\end{equation}
If $G$ consists of a single edge, then with notation as in~\S\ref{MEcacI} we have 
$G_n=\Gamma_n$, and $\Ubb(G)=\Tbb+1$, therefore~\eqref{eq:GCGn} gives
\[
\Ubb(\Gamma_n)=\Tbb^n (\Tbb+1)^{2n+1} A_n(\Tbb)\quad,
\]
proving Proposition~\ref{prop:GCgamman} (in the form given in Proposition~\ref{prop:both}).
\qede\end{example}

\begin{example}
Now let $G$ be a $2$-banana, and let $e$ be one of its (two) edges.
The graph $G\smallsetminus e$ is a single edge. Therefore
\[
\Ubb(G)=\Tbb(\Tbb+1)\quad,\quad \Ubb(G\smallsetminus e) = \Tbb+1\quad,
\]
and the second formula in Theorem~\ref{thm:rationG2} states that
\begin{align*}
\sum_{n\ne 0} \Ubb(G_n)\,\rho^n &= 
\frac{\Tbb(\Tbb+1)+\big((\Tbb-1)(\Tbb+1)-\Tbb(\Tbb+1)\big) r}
{1- (\Tbb+2)\,r+2\, r^2} \\
&=\frac{\big(\Tbb- r\big)}{1- (\Tbb+2)\,r+2\, r^2}\cdot (\Tbb+1)
\end{align*}
With notation as in~\S\ref{MEcacI}, the graph $G_n$ (consisting of a
chain of $n+1$ circles) equals~$\Gamma'_{n+1}$, with two extra
valence-$2$ vertices on the first circle. That is,
\[
\Ubb(\Gamma'_n)=\frac{\Ubb(G_{n-1})}{(\Tbb+1)^2}\quad.
\]
Now, since $r=\Tbb(\Tbb+1)^2\rho$,
\[
\Ubb(G_{n-1})=\Tbb^{n-1}(\Tbb+1)^{2n-2}\cdot
\text{coeff.~of $r^{n-1}$ in the expansion of }
\frac{\big(\Tbb- r\big)(\Tbb+1)}{1- (\Tbb+2)\,r+2\, r^2}
\]
hence
\[
\frac{\Ubb(G_{n-1})}{(\Tbb+1)^2} = \Tbb^{n-1}(\Tbb+1)^{2n-3}\cdot
\text{coeff.~of $r^{n-1}$ in the expansion of }
\frac{\big(\Tbb- r\big)}{1- (\Tbb+2)\,r+2\, r^2}
\]
and therefore
\[
\Ubb(\Gamma'_n) = \Tbb^{n-1}(\Tbb+1)^{2n-3}\cdot
\text{coeff.~of $r^n$ in the expansion of }
\frac{r\big(\Tbb- r\big)}{1- (\Tbb+2)\,r+2\, r^2}\quad.
\]
This holds for $n\ge 1$; setting (as in~\S\ref{MEcacI}) the constant term of the
relevant series to $1$ amounts to adding $1$ to this rational function, and
\[
1+\frac{r\big(\Tbb- r\big)}{1- (\Tbb+2)\,r+2\, r^2}
=\frac{(1-r)^2}{1- (\Tbb+2)\,r+2\, r^2}
\]
verifying Proposition~\ref{prop:GCgammanv}, in the form given in 
Proposition~\ref{prop:both}.
\qede\end{example}

\begin{example}
As a final example, we will prove Proposition~\ref{prop:stars}, by induction on the 
number $s$ of rays. For $s=1$, the statement reproduces 
Proposition~\ref{prop:GCgammanv}; so we only need to prove the induction step,
and we may assume $s>1$.

To transition from $\Sigma^{s-1}_n$ to $\Sigma^s_n$, view $\Sigma^s_n$ as the 
graph obtained by adding a chain of $3$-bananas to one of the edges $e$ of the
central circle in $G=\Sigma^{s-1}_n$. 
\begin{center}
\includegraphics[scale=.3]{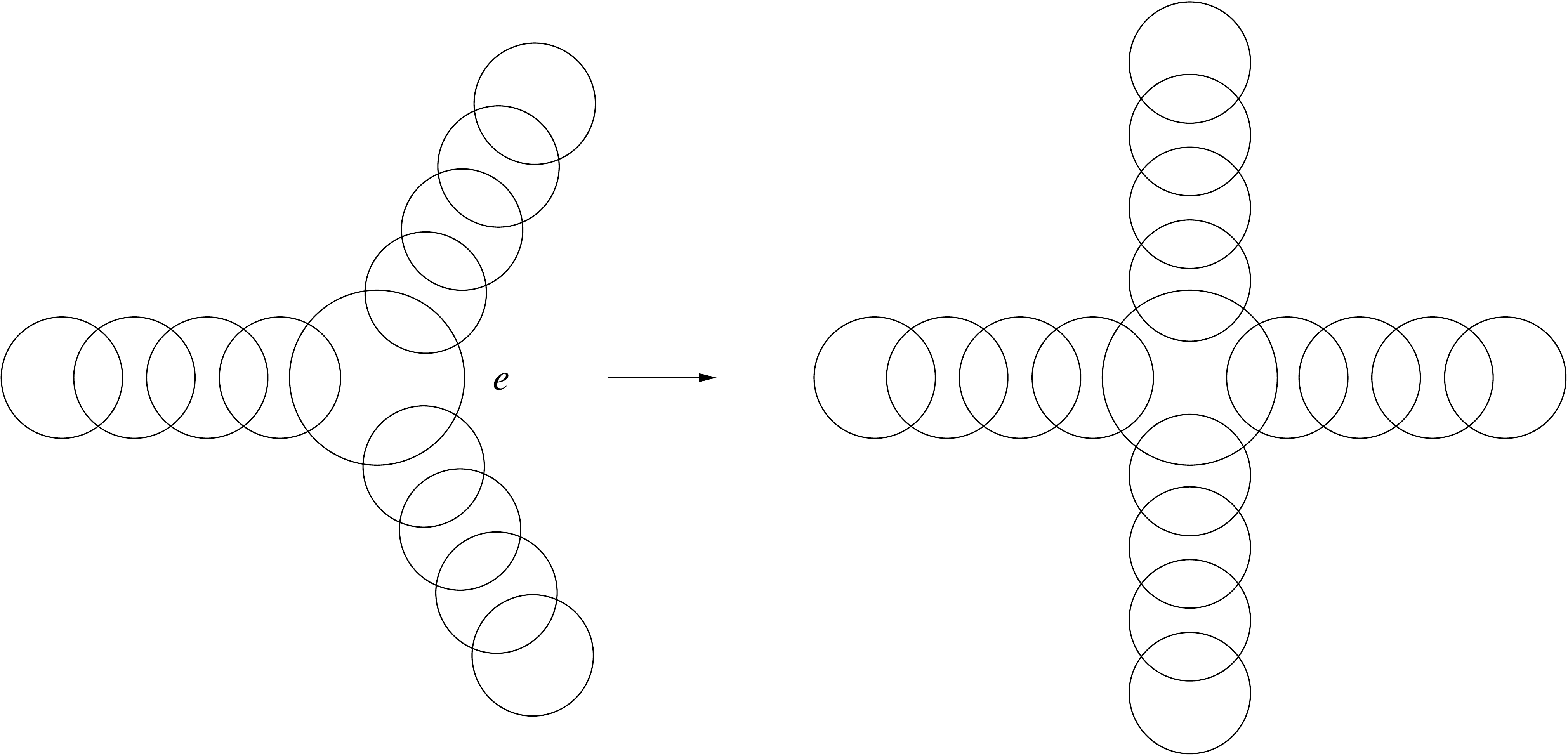}
\end{center}

Since $e$ is not a bridge, we can apply the second formula given in
Theorem~\ref{thm:rationG2}. We write it as follows:
\[
\frac{1-r}
{1- (\Tbb+2)\,r+2\, r^2}\,\Ubb(\Sigma^{s-1}_n)
+
\frac {r\,(\Tbb-1)} 
{1- (\Tbb+2)\,r+2\, r^2}\,\Ubb(\Sigma^{s-1}_n\smallsetminus e)\quad.
\]
The class $\Ubb(\Sigma^s_n)$ is the coefficient of $\rho^n$ in this expression
(i.e., $\Tbb^n(\Tbb+1)^{2n}$ times the coefficient of $r^n$).
We will deal with the two summands separately.
\begin{itemized}
\item
By induction, the first summand equals
\[
\frac{\Ubb(\Gamma_n)}{(\Tbb+1)} \cdot
\Tbb^{(s-1)n}(\Tbb+1)^{2(s-1)n-1} A_n(\Tbb)^{s-2} \cdot
\text{coeff.~of $r^{n+1}$ in } \frac{1-2r+((s-2)\Tbb-(s-3))r^2}{1-(\Tbb+2)r+2r^2}
\]
and $\Ubb(\Gamma_n)=\Tbb^n(\Tbb+1)^{2n+1} A_n(\Tbb)$, so this equals
\[
\Tbb^{sn}(\Tbb+1)^{2sn-1} A_n(\Tbb)^{s-1} \cdot
\text{coeff.~of $r^{n+1}$ in } \frac{1-2r+((s-2)\Tbb-(s-3))r^2}{1-(\Tbb+2)r+2r^2}\quad.
\]
\item
In the second summand, $\Sigma^{s-1}_n\smallsetminus e$ consists of 
a join of $s-1$ chains of $n$-circles, therefore its Grothendieck class
$\Ubb(\Sigma^{s-1}_n\smallsetminus e)$ is the $(s-1)$-st power of
$\Ubb(\Gamma_n)$, up to an appropriate factor of $(\Tbb+1)$ to account
for the fact that
$\Sigma^{s-1}_n\smallsetminus e$ has no valence-$2$ vertices and no external
edges. For example, here is a picture contrasting the join of $3$ graphs $\Gamma_4$
(on the left) with $\Sigma^3_4\smallsetminus e$ (on the right):
\begin{center}
\includegraphics[scale=.4]{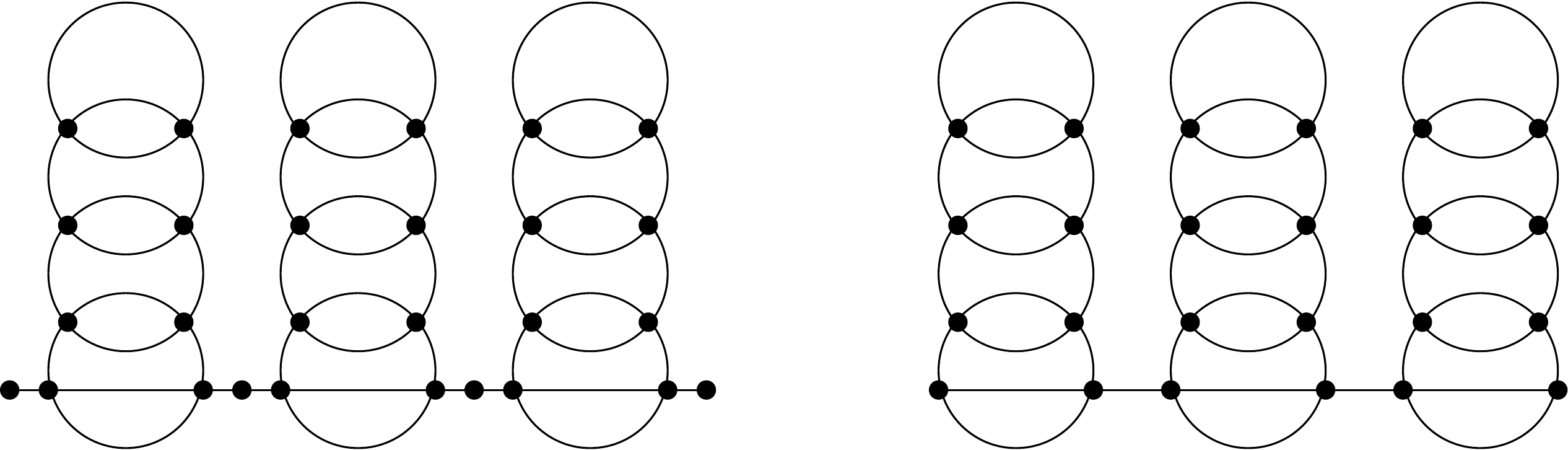}
\end{center}
It follows that
\[
\Ubb(\Sigma^{s-1}_n\smallsetminus e)=\frac{\Ubb(\Gamma_n)^{s-1}}{(\Tbb+1)^s}
=\Tbb^{(s-1)n}(\Tbb+1)^{2(s-1)n-1} A_n(\Tbb)^{s-1}\quad.
\]
Therefore, the second summand equals
\[
\Tbb^{(s-1)n}(\Tbb+1)^{2(s-1)n-1} A_n(\Tbb)^{s-1}\cdot
\text{coeff.~of $\rho^n$ in }
\frac {r\,(\Tbb-1)} {1- (\Tbb+2)\,r+2\, r^2}\quad.
\]
Now, the coefficient of $\rho^n$ equals $\Tbb(\Tbb+1)^2$ times the coefficient of $r^n$,
so this may be rewritten as
\[
\Tbb^{sn}(\Tbb+1)^{2sn-1} A_n(\Tbb)^{s-1}\cdot
\text{coeff.~of $r^n$ in }
\frac {r\,(\Tbb-1)} {1- (\Tbb+2)\,r+2\, r^2}
\]
or equivalently
\[
\Tbb^{sn}(\Tbb+1)^{2sn-1} A_n(\Tbb)^{s-1}\cdot
\text{coeff.~of $r^{n+1}$ in }
\frac {r^2\,(\Tbb-1)} {1- (\Tbb+2)\,r+2\, r^2}\quad.
\]
\end{itemized}
Putting the summands back together, we see that $\Ubb(\Sigma^s_n)$ equals
$\Tbb^{sn}(\Tbb+1)^{2sn-1} A_n(\Tbb)^{s-1}$ times the coefficient of $r^{n+1}$ in
{\small\[
\frac{1-2r+((s-2)\Tbb-(s-3))r^2}{1-(\Tbb+2)r+2r^2}
+\frac {r^2\,(\Tbb-1)} {1- (\Tbb+2)\,r+2\, r^2}
=\frac{1-2r+((s-1)\Tbb-(s-2))r^2}{1-(\Tbb+2)r+2r^2}
\]}
and this verifies the induction step, concluding the proof of Proposition~\ref{prop:stars}.
\qede\end{example}

\bigskip
\subsection*{Acknowkedgment} 
The first author acknowledges support from a Simons Foundation Collaboration Grant, 
award number 625561, and thanks the University of Toronto for hospitality.
The second author is partially supported by
NSF grant DMS-1707882, and by NSERC Discovery Grant RGPIN-2018-04937 
and Accelerator Supplement grant RGPAS-2018-522593, and by the Perimeter
Institute for Theoretical Physics. The third author worked on parts of this project
as summer undergraduate research at the University of Toronto. 

\bigskip
\bigskip
\bigskip
\bigskip
\bigskip
\bigskip


\begin{thebibliography}{99}

\bibitem{AdiHuhKa} K.~Adiprasito, J.~Huh, E.~Katz, {\em Hodge theory for combinatorial geometries}, 
Ann. of Math. (2) 188 (2018), no. 2, 381--452. 

\bibitem{AluMa09} P.~Aluffi, M.~Marcolli, {\em  Feynman motives of banana graphs}, 
Commun. Number Theory Phys. 3 (2009), no. 1, 1--57 [arXiv:0807.1690].

\bibitem{AluMa11b} P.~Aluffi, M.~Marcolli, {\em  Algebro-geometric Feynman rules}, 
Int. J. Geom. Methods Mod. Phys. 8 (2011), no. 1, 203--237  [arXiv:0811.2514].

\bibitem{AluMa11} P.~Aluffi, M.~Marcolli, {\em   Feynman motives and deletion-contraction 
relations}, in ``Topology of algebraic varieties and singularities", 21--64, Contemp. Math., 
538, Amer. Math. Soc., 2011 [arXiv:0907.3225].

\bibitem{AluMa-Potts} P.~Aluffi, M.~Marcolli, {\em A motivic approach to phase transitions 
in Potts models},
Journal of Geometry and Physics, Vol.63 (2013) 6--31
[arXiv:1102.3462].

\bibitem{Andre} Y.~Andr\'e, {\em 
An introduction to motivic zeta functions of motives}, in ``Motives, quantum field theory, 
and pseudodifferential operators", 3--17,
Clay Math. Proc., 12, Amer. Math. Soc., 2010. [arXiv:0812.3920]

\bibitem{BCORS} A.~Baratin, S.~Carrozza, D.~Oriti, J.~Ryan, M.~Smerlak, {\em Melonic 
phase transition in group field theory}, 	Lett. Math. Phys., Vol.~104 (2014), N.8, 1003--1017
[arXiv:1307.5026]. 

\bibitem{BejMar} D.~Bejleri, M.~Marcolli, {\em 
Quantum field theory over $\bF_1$},  J. Geom. Phys. 69 (2013), 40--59.

\bibitem{BeGuHa} D.~Benedetti, R.~Gurau, S.~Harribey, {\em Line of fixed points in a 
bosonic tensor model}, 
arXiv:1903.03578v3

\bibitem{BloVan} S.~Bloch, P.~Vanhove, {\em The elliptic dilogarithm for the sunset graph}, 
J. Number Theory 148 (2015), 328--364. [arXiv:1309.5865]

\bibitem{BGR} V.~Bonzom, R.~Gurau, V.~Rivasseau, {\em Random tensor models in the 
large N limit: Uncoloring the colored tensor models}, Phys. Rev. D85 (2012) 084037 
[arXiv:1202.3637].

\bibitem{BGRR} V.~Bonzom, R.~Gurau, A.~Riello, V.~Rivasseau, {\em Critical behavior of 
colored tensor models in the large $N$ limit}, Nucl. Phys. B853 (2011) 174–195 [arXiv:1105.3122].

\bibitem{BoNaTa} V.~Bonzom, V.~Nador, A.~Tanasa, {\em Diagrammatic proof of the large 
$N$ melonic dominance in the SYK model}, Lett. Math. Phys., Vol.~109 (2019) 2611--2624
[arXiv:1808.10314]. 

\bibitem{CaTa} S.~Carrozza, A.~Tanasa, {\em $O(N)$ random tensor models}, Lett. Math. Phys. 
106 (2016), no. 11 1531--1559 [arXiv:1512.06718].

\bibitem{DaKo} V.~I.~Danilov, A.~G.~Khovanski{\u\i}, {\em Newton polyhedra and an algorithm 
for calculating Hodge--Deligne numbers}, Izv. Akad. Nauk SSSR Ser. Mat. 50 (1986) , 925--945.

\bibitem{FuTa} E.~Fusy, A.~Tanasa, {\em Asymptotic expansion of the multi-orientable random 
tensor model}, 
The electronic journal of combinatorics 22(1) (2015), P1.52 [arXiv:1408.5725]

\bibitem{GelTor} J.B.~Geloun, R.~Toriumi, {\em Parametric representation of rank $d$ tensorial 
group field
theory: Abelian models with kinetic term $\sum_s |p_s|+\mu$}, J. Math. Phys., Vol.~56 (2015) 093503
[arXiv:1409.0398].

\bibitem{GroRo} D.J.~Gross, V.~Rosenhaus, {\em A generalization of Sachdev-Ye-Kitaev}, 
Journal of High Energy Physics (2017), Article number: 93 [arXiv:1610.01569].

\bibitem{Gu4} R.~Gurau, {\em Topological graph polynomials in colored group field theory},
Ann. Henri Poincar\'e, Vol.~11 (2010) 565--584. [arXiv:0911.1945]

\bibitem{Gu1} R.~Gurau, {\em The $1/N$ expansion of colored tensor models}, 
Ann. H. Poincar\'e 12 (2011) 829--847 [arXiv:1011.2726].

\bibitem{Gu3} R.~Gurau, {\em Colored group field theory}, Commun. Math. Phys., Vol.~304 
(2011) 69--93
[arXiv:0907.2582].

\bibitem{Gu2} R.~Gurau, {\em Random Tensors}. Oxford University Press, 2016.

\bibitem{GuRy} R.~Gurau, J.P.~Ryan, {\em Colored tensor models -- a review}, SIGMA 8 (2012) 020
[arXiv:1109.4812].

\bibitem{Huh} J.~Huh, {\em Combinatorial applications of the Hodge-Riemann relations}, in 
``Proceedings of the International Congress of Mathematicians--Rio de Janeiro 2018. Vol. IV. 
Invited lectures'', pp.~3093--3111, 
World Scientific, 2018.

\bibitem{KleTar} I.R.~Klebanov, G.~Tarnopolsky, {\em Uncolored random tensors, melon diagrams, and the SYK models}, Phys. Rev. D95 (2017), no. 4 046004 [arXiv:1611.08915].

\bibitem{ManMar} Yu.I.~Manin, M.~Marcolli, {\em Moduli operad over $\bF_1$}, in 
``Absolute arithmetic and $\bF_1$-geometry", pp.~331--361, Eur. Math. Soc., 2016.

\bibitem{Mar-book} M.~Marcolli, {\em Feynman motives}, World Scientific, 2010.

\bibitem{MaTa} M.~Marcolli, G.~Tabuada, {\em Feynman quadrics-motive of the massive 
sunset graph},
 J. Number Theory, Vol.~195 (2019), 159--183.
[arXiv:1705.10307]

\bibitem{Stan} R. Stanley, {\em Log-concave and unimodal sequences in algebra, combinatorics, 
and geometry}, in ``Graph theory and its applications: {E}ast and {W}est ({J}inan, 1986)'',
pp.~500--535, Ann. New York Acad. Sci., vol.~576, 1989.

\bibitem{Tan} A.~Tanasa, {\em Generalization of the Bollob\'as--Riordan polynomial for tensor graphs}, 
J. Math. Phys., Vol.~52 (2011), 073514 
[arXiv:1012.1798]

\bibitem{Wit} E.~Witten, {\em An SYK-like model without disorder}, arXiv:1610.09758.

\end{thebibliography}
\end{document}